\newtheorem{thm}{Theorem}[section]
\newtheorem{lem}[thm]{Lemma}
\newtheorem*{thmx}{Main Theorem}
\newtheorem{prop}[thm]{Proposition}
\newtheorem{cor}[thm]{Corollary}
\numberwithin{equation}{section}
\theoremstyle{definition}
\newtheorem{rmk}[thm]{Remark}
\newtheorem*{keyob}{Expectation}
\newtheorem{defn}[thm]{Definition}
\newtheorem{claim}[thm]{Claim}
\title{Proper good quotients for $\mathbf{G}_m$-actions}
\author{Xucheng Zhang}
\date{\today}
\address{Yau Mathematical Sciences Center, Tsinghua University, Beijing 100084, China \\
Email: \href{mailto: xucheng.zhang@stud.uni-due.de}{zhangxucheng@mail.tsinghua.edu.cn}}
\begin{document}

\begin{abstract}
We give an algebraic proof of a result, due to Bia\l{}ynicki-Birula and Sommese, characterizing the invariant open subsets of a normal proper variety equipped with a $\mathbf{G}_m$-action that admit a proper good quotient. A major ingredient is the existence result for moduli spaces of algebraic stacks due to Alper, Halpern-Leistner and Heinloth.
\end{abstract}

\maketitle

\section{Introduction}

For algebraic stacks arising from moduli problems, the existence criterion of moduli spaces \cite{MR4665776} gives a method to find out those open substacks that admit proper moduli spaces. In this article we consider the problem in the case of global quotient stacks $[X/\mathbf{G}_m]$ where $X$ is a normal proper variety equipped with a $\mathbf{G}_m$-action. In other words, we want to find those $\mathbf{G}_m$-invariant open subsets of $X$ that admit a proper good quotient. For a meromorphic locally linearizable $\mathbf{G}_m$-action on a normal compact analytic space $X$, Bia\l{}ynicki-Birula and Sommese \cite{MR709583} provided a combinatorial description of those open subsets with compact geometric quotients (see also \cite{MR755486} for the result on compact good quotients).

To formulate the result let $\mathbf{G}_m$ act on a normal proper variety $X$ and let $X^{0} \subseteq X$ be the closed subscheme of fixed points. One special thing about $\mathbf{G}_m$-actions is that they form a canonical flow on $X$. The flow-lines define a natural relation on the set $\pi_0(X^{0})$ of connected components of $X^{0}$. 

Building on these datum Bia\l{}ynicki-Birula and Sommese introduced the notion of sections, a division $(A^-,A^+)$ of the set $\pi_0(X^{0})$ with respect to this relation. Any section defines an open subset of $X$ consisting of points that flow from a fixed point component in $A^-$ to another fixed point component in $A^+$. The authors then proved that all open subsets with proper geometric quotients arise in this way. They also formulated a variant, called semi-sections, to describe open subsets with proper good (but not necessarily geometric) quotients.

The main result of this article is an algebraic version of their result, which works over arbitrary fields.
\begin{thmx}[Theorem \ref{onlyif}, Theorem \ref{prop:if}]
Let $X$ be a normal proper variety over a field $k$ with a $\mathbf{G}_m$-action. There is a one-to-one correspondence
\[
\begin{Bmatrix} 
\text{Semi-sections on } X \\
\text{(Definition \ref{def:ss})}
\end{Bmatrix}
\leftrightarrow
\begin{Bmatrix} 
\mathbf{G}_m\text{-invariant open dense subsets of } X \\
\text{with proper good quotients}
\end{Bmatrix},
\]
given by assigning each semi-section to the corresponding semi-sectional subset. 

Moreover, under this correspondence $\mathbf{G}_m$-invariant open dense subsets of $X$ with proper geometric quotients correspond to sections on $X$.
\end{thmx}
\begin{rmk}
As a corollary, there are only finitely many open subsets of $X$ with proper good quotients, and $X$ is covered by such open subsets if in addition $X$ is projective (Proposition \ref{prop-app}). The latter can be compared with the counterpart on the moduli stack of rank 2 vector bundles over a curve (see \cite[Theorem 1]{zhang-rank-2} or \cite[Theorem B]{DZ2023}), where only simple or semistable vector bundles admit open neighbourhoods with good moduli spaces.
\end{rmk}
The bijection in \textbf{Main Theorem} is established using the result of \cite{MR4665776} that characterizes algebraic stacks with proper moduli spaces, which involves three local criteria: $\Theta$-reductivity ($\Theta$), S-completeness (S) and the existence part of valuative criterion for properness (E). The $\Theta$-reductivity of general quotient stacks is characterized in \cite[Proposition 3.13]{MR4665776} and we adapt it in our situation (Lemma \ref{theta-red}). For the last two conditions by definition we have to consider degenerations of \emph{all} orbit closures and it could be a mess. 

Fortunately, we find that for certain stacks (including quotient stacks from $\mathbf{G}_m$-actions) it suffices to consider degenerations of the \emph{generic} orbit closure, which simplifies a lot. For (E) this actually holds in a great generality \cite[\href{https://stacks.math.columbia.edu/tag/0CQM}{Tag 0CQM}]{stacks-project}. But the situation for (S) is more subtle and this comes our first key input. We show (Theorem \ref{thm:S-com-new}) that S-completeness can be checked on families that cover the generic point, generalizing the refined valuative criterion for separatedness of morphisms between algebraic spaces (\cite[\href{https://stacks.math.columbia.edu/tag/0CME}{Tag 0CME}]{stacks-project}). This seems to be new and might be of independent interest, as S-completeness is the most pivotal element in the construction of separated moduli spaces. The proof is somehow technical and relies on a detailed study of some rather complicated mapping stacks.

Therefore degenerations of the generic orbit can serve as test objects for both (S) and (E) and we model them into a definition (Definition \ref{key-defn:sm}). In addition to their simple form, the effectiveness of these individual objects saves us from constructing the Douady space $Q$ that parameterizes all degenerations of the generic orbit, and deriving its numerous properties, as in \cite[(0.1.2) Theorem]{MR709583}. This is our second key input. In addition, it enables us to avoid some technical difficulties in the original arguments.

Then we manage to formulate a geometric criterion when quotient stacks from $\mathbf{G}_m$-actions satisfy (S) and (E) in terms of the test objects (Theorem \ref{summary}). This criterion, however, does not seem powerful enough to deduce a description of those open subsets with proper good quotients, even coupled with ($\Theta$) (Lemma \ref{theta-red}), it does hint a topological criterion when a separated good quotient is proper (Proposition \ref{prop:2-connected}), which among other things helps to formulate the notion of semi-sections (Definition \ref{def:ss}). The key step is to compare weights on certain cohomology groups, and in the singular case we use the intersection cohomology. This is our third key input. With these two criteria, we are able to describe those open subsets with proper good quotients (Theorem \ref{onlyif}). 

Several arguments in this article are similar to those in \cite{MR709583} or \cite{MR755486}, but the new formulation in terms of existence criteria \cite{MR4665776} makes the framework very transparent and coherent, and also put us in a good place to state and prove intermediate results under less assumptions. In particular we will see that the mysterious notion of semi-sections appears naturally just from the condition imposed by S-completeness.

\subsection*{Acknowledgement}
The work presented in this article constitutes the second half of my Ph.D. thesis carried out at Universit\"{a}t Duisburg-Essen. I would like to thank my supervisor Jochen Heinloth for suggesting this topic and sharing many ideas along this project. I would also like to thank Yifei Chen, Baohua Fu and Daniel Greb for discussions on the smoothability of maximal chain of orbits.
\section{Preliminaries}\label{Preliminary}
\subsection{A refined valuative criterion for S-completeness}
For any DVR $R$ with fraction field $K$, residue field $\kappa$, and uniformizer $\pi \in R$, as in \cite[\S 2.B]{MR3758902}, the ``separatedness test space'' is defined as the quotient stack
\[
\overline{\mathrm{ST}}_R:=[\mathrm{Spec}(R[x,y]/xy-\pi)/\mathbf{G}_m],
\]
where $x,y$ have $\mathbf{G}_m$-weights $1,-1$ respectively. Denote by $0:=\mathrm{B}\mathbf{G}_{m,\kappa}=[\mathrm{Spec}(\kappa)/\mathbf{G}_m]$ its unique closed point defined by the vanishing of both $x$ and $y$.
\begin{defn}[\cite{MR4665776}, Definition 3.38]
A morphism $f: \mathscr{X} \to \mathscr{Y}$ of locally noetherian algebraic stacks is \emph{S-complete} if for every DVR $R$, any commutative diagram
\begin{equation}\label{D1}
\begin{tikzcd}
\overline{\mathrm{ST}}_R -\{0\} \ar[r] \ar[d,hook] & \mathscr{X} \ar[d,"f"] \\
\overline{\mathrm{ST}}_R \ar[r] \ar[ur,dashed,"\exists !"'] & \mathscr{Y}
\end{tikzcd}
\end{equation}
of solid arrows can be uniquely filled in.
\end{defn}
In this section we prove that in some cases it suffices to check S-completeness for families that cover the generic point. To be precise
\begin{thm}\label{thm:S-com-new}
Let $f: \mathscr{X} \to \mathscr{Y}$ be a quasi-compact morphism of algebraic stacks, locally of finite type and with affine diagonal over a locally excellent quasi-separated algebraic space $S$. Assume that $h: \mathscr{X}^\circ \to \mathscr{X}$ is a dominant morphism of algebraic stacks, finite type over $S$. Suppose one of the following holds:
\begin{enumerate}
\item
$\mathscr{X}$ is locally reductive and admits an adequate moduli space, $\mathscr{Y}=S$ and $f$ is the structure morphism.
\item
$\mathscr{X}$ admits a cover by separated representable \'{e}tale morphisms $[X/\mathbf{G}_m] \to \mathscr{X}$, where $X$ is a quasi-separated quasi-compact algebraic space.
\end{enumerate}
Then $f$ is S-complete if and only if $f$ is S-complete relative to $h$, i.e., for every DVR $R$ with fraction field $K$, any commutative diagram
\[
\begin{tikzcd}
\overline{\mathrm{ST}}_R-\{0\} \ar[r] \ar[d,hook] & \mathscr{X} \ar[d,"f"] \\
\overline{\mathrm{ST}}_R \ar[ur,dashed,"\exists!"'] \ar[r] & \mathscr{Y}
\end{tikzcd}
\]
of solid arrows such that $\mathrm{Spec}(K) \hookrightarrow \overline{\mathrm{ST}}_R-\{0\} \to \mathscr{X}$ factors through $h: \mathscr{X}^\circ \to \mathscr{X}$ can be uniquely filled in.
\end{thm}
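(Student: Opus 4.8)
The plan is to reduce the general S-completeness test to the relative one by a descent/limit argument. First I would observe that the "only if" direction is trivial, so the content is the converse: given an arbitrary diagram \eqref{D1}, I must show it can be uniquely filled in, knowing this for diagrams whose generic point factors through $h$. The uniqueness of the filling is the easier half: by the Langton-type argument (or directly from the valuative criterion for the diagonal, which is affine hence separated), two fillings agreeing on $\overline{\mathrm{ST}}_R - \{0\}$ agree, since $\overline{\mathrm{ST}}_R - \{0\}$ is scheme-theoretically dense in $\overline{\mathrm{ST}}_R$ (the local ring at $0$ has no embedded components, $xy - \pi$ being a nonzerodivisor). So the crux is \emph{existence} of the filling.

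For existence I would proceed as follows. Given a map $g\colon \overline{\mathrm{ST}}_R - \{0\} \to \mathscr{X}$, I want to modify the DVR $R$ — replacing it by an extension $R \hookrightarrow R'$ of DVRs (possibly after first pulling back along a ramified base change $\pi \mapsto \pi^n$, and allowing a residue-field extension) — so that the composite over $\operatorname{Spec}(K')$ factors through $\mathscr{X}^\circ$, while retaining enough faithful flatness that filling in after base change lets one descend the original filling. The point is that $h$ is dominant, so the generic point of $\mathscr{X}$ is hit; lifting the generic point of the source against the finite-type dominant $h$ requires only a finitely generated field extension, which can be spread out and then, after shrinking and a suitable blow-up / normalization of the test space, arranged to extend over the punctured test space. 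Here is where hypotheses (1) and (2) enter: in case (2) the étale cover $[X/\mathbf{G}_m] \to \mathscr{X}$ lets me work with an honest algebraic space $X$ with a $\mathbf{G}_m$-action and reduce to understanding maps into $[X/\mathbf{G}_m]$, i.e. $\mathbf{G}_m$-torsors over $\overline{\mathrm{ST}}_R - \{0\}$ together with equivariant maps to $X$, which can be analyzed concretely since every line bundle on $\overline{\mathrm{ST}}_R$ is (étale-locally) standard; in case (1) the adequate moduli space $\mathscr{X} \to \mathbf{X}$ is used to first fill in the induced map to the (separated, even proper over $S$ after properness of $f$) algebraic space $\mathbf{X}$, pinning down the closed point of the hoped-for extension, and then the local reductivity / étale-local structure near that point (Luna-type slice, \cite{MR4665776}) reduces back to a quotient stack situation as in (2).

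The key step — and the main obstacle — is controlling the \emph{mapping stack} $\underline{\mathrm{Map}}_S(\overline{\mathrm{ST}}_R, \mathscr{X})$ and its restriction map to $\underline{\mathrm{Map}}_S(\overline{\mathrm{ST}}_R - \{0\}, \mathscr{X})$: I need to know that the locus of maps factoring (generically) through $\mathscr{X}^\circ$ is dense enough, and that the "extend over $0$" condition is representable by something on which I have valuative-criterion leverage. This is exactly the "detailed study of some rather complicated mapping stacks" the introduction flags. Concretely, after trivializing the $\mathbf{G}_m$-torsor one is looking at maps $\operatorname{Spec}(R[x,y]/(xy-\pi)) \to X$ that are $\mathbf{G}_m$-equivariant for the grading; the obstruction to extending across $0$ lives in the completed local ring $\widehat{\mathcal{O}}$, a two-dimensional complete local ring, and one must show that the generic factorization through $\mathscr{X}^\circ$ propagates. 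I would handle this by a Noetherian induction / approximation argument: spread the generic $\mathscr{X}^\circ$-factorization out over an open $U \subseteq \overline{\mathrm{ST}}_R - \{0\}$ containing the generic point, note $\overline{\mathrm{ST}}_R - (\{0\} \cup U)$ is then contained in the closed fiber over $\kappa$, and use that the closed fiber of $\overline{\mathrm{ST}}_R$ is a union of two affine lines meeting at $0$ — so after a further base change in $R$ one can absorb the bad locus. Once the generic point factors through $h$, the hypothesized relative S-completeness supplies a filling over the (extended) DVR, and faithfully flat descent along $R \to R'$, combined with the uniqueness established above, produces the filling over $R$ itself. I expect the technical heart to be verifying that this descent is legitimate, i.e. that the filled-in map over $R'$ together with its two "conjugate" restrictions glues — precisely the kind of gluing that makes S-completeness the subtle condition it is.
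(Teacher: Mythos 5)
Your plan has a fundamental gap at its very center: you propose to ``replace $R$ by an extension $R\hookrightarrow R'$ of DVRs (possibly with a ramified base change or residue extension) so that the composite over $\mathrm{Spec}(K')$ factors through $\mathscr{X}^\circ$.'' This cannot work. The map $\mathrm{Spec}(K)\to\mathscr{X}$ is fixed by the given diagram; since (after shrinking) $h\colon\mathscr{X}^\circ\hookrightarrow\mathscr{X}$ is an open immersion, a point $\mathrm{Spec}(K')\to\mathrm{Spec}(K)\to\mathscr{X}$ factors through $\mathscr{X}^\circ$ if and only if the original $\mathrm{Spec}(K)\to\mathscr{X}$ already does. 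Extending the DVR never changes the image of the generic point inside $|\mathscr{X}|$, and dominance of $h$ only says the generic point of $|\mathscr{X}|$ is in the image of $h$, not that the specific $K$-point of your diagram is. So your whole existence strategy --- ``push the generic point into $\mathscr{X}^\circ$, apply the hypothesis, descend'' --- collapses at the first step, and the subsequent descent difficulties you flag are moot. (The paper does perform a DVR modification in its case~(2b), but only to exhibit a \emph{new} pair of $R'$-points with the same closed-point images, used solely to find a common specialization and land in a chart; it is not a base change of the original diagram and nothing is descended from it.)

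Your uniqueness argument is also too quick: affine diagonal gives that $\mathrm{Isom}(g_1,g_2)$ is affine over $\overline{\mathrm{ST}}_R$, and a section over the dense open $\overline{\mathrm{ST}}_R-\{0\}$ of an affine (rather than finite/proper) morphism need not extend over the closed point. Uniqueness of the filling is not ``for free''; in the paper's treatment it comes out of the same reduction that handles existence.

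The paper's actual route is essentially orthogonal to yours. The key move (case~(2)) is to replace the two-dimensional test object $\overline{\mathrm{ST}}_R$ by the flat family $[\mathbf A^2/\mathbf G_m]\to\mathbf A^1$, and to consider the mapping stack $\mathbf S(\mathscr{X})=\underline{\mathrm{Map}}_{\mathbf A^1}([\mathbf A^2/\mathbf G_m],\mathscr{X}\times\mathbf A^1)$ over $\mathbf A^1$ (not $\underline{\mathrm{Map}}_S(\overline{\mathrm{ST}}_R,\mathscr{X})$, which you mention and which is over the wrong base). This converts the S-completeness lifting problem on $\overline{\mathrm{ST}}_R$ into an ordinary DVR lifting problem for $\mathrm{Spec}(K)\hookrightarrow\mathrm{Spec}(R)$ against the evaluation morphism $\mathbf S(\mathscr{X})\to\mathbf S(\mathscr{Y})\times_{\mathscr{Y}^2}\mathscr{X}^2$. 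For a quotient chart $[X/\mathbf G_m]$, the integer $d$ classifying the $\mathbf G_m$-torsor on $\overline{\mathrm{ST}}_R-\{0\}$ allows a further reduction to the equivariant mapping \emph{space} $\underline{\mathrm{Map}}_{\mathbf A^1}^{\mathbf G_m}(\mathbf A^2_d,X\times\mathbf A^1)$, and then to its irreducible closed subspace $(-)^\flat$ (closure of the graph of $\mathrm{act}_d$) on which the locus induced by $X^\circ\subseteq X$ is open \emph{and dense}. Only at that point does the refined valuative criterion for algebraic spaces \cite[\href{https://stacks.math.columbia.edu/tag/0CME}{Tag 0CME}]{stacks-project} apply, and it handles both parts of the filling problem at once. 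Case~(1) is separate and shorter: pass to the adequate moduli space $X$, reduce to separatedness of $X\to S$ over a dense open $X^\circ$, and reconstruct the $\overline{\mathrm{ST}}_R$-diagram by lifting two $R$-points of $X$ to $\mathscr{X}$ via universal closedness of the moduli map. Your proposal does not recover any of this structure, and as it stands the central reduction step is not valid.
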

Since a morphism between quasi-separated, locally noetherian algebraic spaces is S-complete if and only if it is separated (see \cite[Proposition 3.46]{MR4665776}), this generalizes the refined valuative criterion for separatedness of morphisms between algebraic spaces (see, e.g. \cite[\href{https://stacks.math.columbia.edu/tag/0CME}{Tag 0CME}]{stacks-project}). For algebraic stacks the situation is more subtle and we need to study some rather complicated mapping stacks.
\subsubsection{Case (1)}
Suppose $\mathscr{X} \to S$ is S-complete relative to $h$. Let $\mathscr{X} \to X$ be the adequate moduli space. Let $X^\circ \subseteq X$ be an open dense algebraic subspace contained in the image of the dominant morphism $\mathscr{X}^\circ \to \mathscr{X} \to X$. Then $\mathscr{X} \to S$ is S-complete if and only if $X \to S$ is separated (see \cite[Proposition 3.48 (2)]{MR4665776}), if and only if for every DVR $R$ with fraction field $K$, any commutative diagram
\begin{equation}\label{1614}
\begin{tikzcd}
\mathrm{Spec}(K) \ar[r] \ar[d,hook] & X^\circ \ar[r,hook] & X \ar[d] \\
\mathrm{Spec}(R) \ar[urr,dashed] \ar[rr] & & S
\end{tikzcd}
\end{equation}
of solid arrows admits at most one dotted arrow filling in (see \cite[\href{https://stacks.math.columbia.edu/tag/0CME}{Tag 0CME}]{stacks-project}). Our argument below is taken from the proof of \cite[Proposition 3.48 (2)]{MR4665776}. Given a commutative diagram \eqref{1614} of solid arrows and suppose that there exist two dotted arrows filling in, i.e.,
\[
f_1, f_2: \mathrm{Spec}(R) \to X \text{ such that } u:=f_1|_K=f_2|_K: \mathrm{Spec}(K) \to X^\circ,
\]
then we claim that $f_1=f_2$. This equality can be checked up to finite extensions of DVRs. Since the adequate moduli space $\mathscr{X} \to X$ is universally closed (see \cite[Theorem 5.3.1 (2)]{MR3272912}), up to a finite extension of DVRs we may choose liftings
\begin{itemize}
\item
$\tilde{u}: \mathrm{Spec}(K) \to \mathscr{X}^\circ$ of $u$.
\item
$\tilde{f}_1,\tilde{f}_2: \mathrm{Spec}(R) \to \mathscr{X}$ of $f_1,f_2$ respectively such that $\tilde{f}_1|_K=\tilde{f}_2|_K=\tilde{u}$.
\end{itemize}
Therefore $\tilde{f}_1 \cup \tilde{f}_2$ defines a commutative diagram
\[
\begin{tikzcd}
\overline{\mathrm{ST}}_R-\{0\} \ar[r,"\tilde{f}_1 \cup \tilde{f}_2"] \ar[d,hook] & \mathscr{X} \ar[d] \\
\overline{\mathrm{ST}}_R \ar[ur,dashed,"\exists !"'] \ar[r] & S
\end{tikzcd}
\]
of solid arrows such that $(\tilde{f}_1 \cup \tilde{f}_2)|_K=\tilde{u}: \mathrm{Spec}(K) \hookrightarrow \overline{\mathrm{ST}}_R-\{0\} \to \mathscr{X}$ factors through $h: \mathscr{X}^\circ \to \mathscr{X}$, by assumption there exists a unique dotted arrow filling in. Since $\overline{\mathrm{ST}}_R \to \mathrm{Spec}(R)$ is a good moduli space, it is universal for maps to algebraic spaces (see \cite[Theorem 6.6]{MR3237451}), the extended morphism $\overline{\mathrm{ST}}_R \to \mathscr{X}$ descends to a unique morphism $\mathrm{Spec}(R) \to X$ which is equal to both $f_1$ and $f_2$.
\subsubsection{Case (2)}
Suppose $f: \mathscr{X} \to \mathscr{Y}$ is S-complete relative to $h$. The idea is to reformulate S-completeness as a lifting problem for DVRs (as in the proof of \cite[Proposition 3.42]{MR4665776}) and then reduces to the case of algebraic spaces.

The stack $\overline{\mathrm{ST}}_R$ can be viewed as a local model of the quotient stack $[\mathbf{A}^2/\mathbf{G}_m]$, where $\mathbf{A}^2=\mathrm{Spec}(k[x,y])$ such that $x,y$ have $\mathbf{G}_m$-weights $1,-1$ respectively. To be precise, $\overline{\mathrm{ST}}_R$ is the base change of the good moduli space $[\mathbf{A}^2/\mathbf{G}_m] \to \mathbf{A}^1=\mathrm{Spec}(k[xy])$ along the morphism $\mathrm{Spec}(R) \to \mathbf{A}^1$ corresponding to $xy \to \pi$, i.e., we have a Cartesian diagram
\begin{equation}\label{1508-5}
\begin{tikzcd}
\mathrm{Spec}(R[x,y]/xy-\pi) \ar[d] \ar[r] & \mathbf{A}^2=\mathrm{Spec}(k[x,y]) \ar[d,"{\mathbf{G}_m\text{-tor}}"] \\
\overline{\mathrm{ST}}_R \ar[r] \ar[d] & \left[\mathbf{A}^2/\mathbf{G}_m\right] \ar[d,"\text{gms}"] \ar[ul,phantom,"\lrcorner"] \\
\mathrm{Spec}(R) \ar[r,"xy \mapsto \pi"'] & \mathbf{A}^1=\mathrm{Spec}(k[xy]). \ar[ul,phantom,"\lrcorner"]
\end{tikzcd}
\end{equation}
Denote by $\mathbf{S}(\mathscr{X}):=\underline{\mathrm{Map}}_{\mathbf{A}^1}([\mathbf{A}^2/\mathbf{G}_m],\mathscr{X} \times \mathbf{A}^1) \to \mathbf{A}^1$ the mapping stack over $\mathbf{A}^1$. Since $[\mathbf{A}^2/\mathbf{G}_m] \to \mathbf{A}^1$ is a flat good moduli space, \cite[Theorem 5.10]{MR4088350} implies that the mapping stack $\mathbf{S}(\mathscr{X})$ is an algebraic stack, locally of finite type over $\mathbf{A}^1$. The pre-image of $\mathbf{A}^1-\{0\}$ in $\mathbf{S}(\mathscr{X})$ is $\mathscr{X} \times (\mathbf{A}^1-\{0\})$ and the fiber of $\mathbf{S}(\mathscr{X}) \to \mathbf{A}^1$ over $0$ is 
\[
\underline{\mathrm{Map}}_S(\Theta,\mathscr{X}) \times_{\underline{\mathrm{Map}}_S(\mathrm{B}\mathbf{G}_{m},\mathscr{X})} \underline{\mathrm{Map}}_S(\Theta,\mathscr{X}).
\]
Moreover we have the evaluation map
\[
\mathrm{ev}(f)_{(x,1),(1,y)}: \mathbf{S}(\mathscr{X}) \to \mathbf{S}(\mathscr{Y}) \times_{\mathscr{Y} \times \mathscr{Y}} \mathscr{X} \times \mathscr{X}
\]
given by on the first factor $\mathbf{S}(\mathscr{X}) \to \mathbf{S}(\mathscr{Y})$ composition and on the second factor $\mathbf{S}(\mathscr{X}) \to \mathscr{X} \times \mathscr{X}$ evaluation on the two sections $\mathrm{ev}_{(x,1)},\mathrm{ev}_{(1,y)}: \mathbf{A}^1 \to [\mathbf{A}^2/\mathbf{G}_m]$ of the good moduli space $[\mathbf{A}^2/\mathbf{G}_m] \to \mathbf{A}^1$. By \eqref{1508-5} a morphism $\overline{\mathrm{ST}}_R \to \mathscr{X}$ is a section in the diagram
\[
\begin{tikzcd}
& \mathbf{S}(\mathscr{X}) \ar[d] \\
\mathrm{Spec}(R) \ar[r,"xy \mapsto \pi"'] \ar[ur,dashed] & \mathbf{A}^1.
\end{tikzcd}
\]
Then the original lifting problem of S-completeness \eqref{D1} translates into the following lifting problem:
\begin{equation}\label{D2}
\begin{tikzcd}
\mathrm{Spec}(K) \ar[r] \ar[d,hook] & \mathbf{S}(\mathscr{X}) \ar[d,"{\mathrm{ev}(f)_{(x,1),(1,y)}}"] \\
\mathrm{Spec}(R) \ar[r] \ar[ur,dashed,"\exists!"'] & \mathbf{S}(\mathscr{Y}) \times_{\mathscr{Y}^2} \mathscr{X}^2
\end{tikzcd}
\end{equation}
and $\mathrm{Spec}(K) \hookrightarrow \overline{\mathrm{ST}}_R-\{0\} \to \mathscr{X}$ in \eqref{D1} factors through $h: \mathscr{X}^\circ \to \mathscr{X}$ if and only if $\mathrm{Spec}(K) \to \mathbf{S}(\mathscr{X})$ in \eqref{D2} factors through $\mathbf{S}(h): \mathbf{S}(\mathscr{X}^\circ) \to \mathbf{S}(\mathscr{X})$. The morphism $\mathrm{ev}(f)_{(x,1),(1,y)}$ is representable (as in \cite[Lemma 3.7 (1)]{MR4665776}) but not necessarily quasi-compact (see \cite[Remark 3.8]{MR4665776}).

Replacing $\mathscr{X}^\circ$ by an open dense substack of $\mathscr{X}$ in the image of the dominant morphism $h: \mathscr{X}^\circ \to \mathscr{X}$, we may assume that $h$ is an open dense immersion.
\begin{enumerate}
\item[(a)] 
First we prove the claim for a quotient stack $\mathscr{X}=[X/\mathbf{G}_m]$. Then $\mathscr{X}^\circ \cong [X^\circ/\mathbf{G}_m]$ for some $\mathbf{G}_m$-invariant open dense subset $X^\circ \subseteq X$. Note that any morphism $[\mathbf{A}^2/\mathbf{G}_m] \to \mathscr{X}$ gives a commutative diagram with Cartesian squares
\[
\begin{tikzcd}
& \mathbf{A}^2_d \ar[dr] \\
\mathbf{A}^2 \times \mathbf{G}_m \ar[d] \ar[r] & \left[\mathbf{A}^2 \times \mathbf{G}_m/\mathbf{G}_m\right] \ar[r] \ar[d] \ar[u,"\text{gms}"] & X \ar[d] \\
\mathbf{A}^2 \ar[r,"\mathbf{G}_m\text{-tor}"'] & \left[\mathbf{A}^2/\mathbf{G}_m\right] \ar[r] \ar[ul,phantom,"\lrcorner"] & \mathscr{X} \ar[ul,phantom,"\lrcorner"],
\end{tikzcd}
\]
where $\mathbf{A}^2 \times \mathbf{G}_m=\mathrm{Spec}(k[x,y,t,t^{-1}])$ such that $x,y,t$ have $\mathbf{G}_m$-weights $1,-1,d$ respectively for some positive integer $d$. Since $X$ is an algebraic space, the morphism $[\mathbf{A}^2 \times \mathbf{G}_m/\mathbf{G}_m] \to X$ factors through the good moduli space 
\begin{align*}
[\mathbf{A}^2 \times \mathbf{G}_m/\mathbf{G}_m] \to \mathbf{A}^2_d:&=\mathrm{Spec}(k[xy,x^dt^{-1},y^dt]) \\
&=\mathrm{Spec}(k[\pi,x',y']/x'y'-\pi^d),
\end{align*}
and the induced morphism $\mathbf{A}^2_d \to X$ is $\mathbf{G}_m$-equivariant with respect to the $\mathbf{G}_m$-action on $\mathbf{A}^2_d$ such that $x',y'$ have $\mathbf{G}_m$-weights $d,-d$ respectively. We have a section $[\mathbf{A}^2/\mathbf{G}_m] \to [\mathbf{A}^2 \times \mathbf{G}_m/\mathbf{G}_m]$ and the resulting composition $[\mathbf{A}^2/\mathbf{G}_m] \to [\mathbf{A}^2 \times \mathbf{G}_m/\mathbf{G}_m] \to \mathbf{A}^2_d$ induces a morphism
\[
\underline{\mathrm{Map}}_{\mathbf{A}^1}^{\mathbf{G}_m}(\mathbf{A}^2_d,X \times \mathbf{A}^1) \to \underline{\mathrm{Map}}_{\mathbf{A}^1}([\mathbf{A}^2/\mathbf{G}_m],\mathscr{X} \times \mathbf{A}^1)=\mathbf{S}(\mathscr{X}).
\]
Since $\mathbf{A}^2_d \to \mathbf{A}^1$ is a flat good quotient, \cite[Corollary 5.11]{MR4088350} implies that the $\mathbf{G}_m$-equivariant mapping stack $\underline{\mathrm{Map}}_{\mathbf{A}^1}^{\mathbf{G}_m}(\mathbf{A}^2_d,X \times \mathbf{A}^1)$ is an algebraic space, locally of finite type over $\mathbf{A}^1$. The above argument shows that any morphism $\mathrm{Spec}(K) \to \mathbf{S}(\mathscr{X})$ lifts to $\underline{\mathrm{Map}}_{\mathbf{A}^1}^{\mathbf{G}_m}(\mathbf{A}^2_d,X \times \mathbf{A}^1)$ for some uniquely determined positive integer $d$, and the same would hold for any extension $\mathrm{Spec}(R) \to \mathbf{S}(\mathscr{X})$. Then the diagram \eqref{D2} has a lifting if and only if for every integer $d>0$ the following lifting problem:
\begin{equation}\label{D3}
\begin{tikzcd}
\mathrm{Spec}(K) \ar[r] \ar[dd,hook] & \underline{\mathrm{Map}}_{\mathbf{A}^1}^{\mathbf{G}_m}(\mathbf{A}^2_d,X \times \mathbf{A}^1) \ar[d] \\
& \mathbf{S}(\mathscr{X}) \ar[d,"{\mathrm{ev}(f)_{(x,1),(1,y)}}"] \\
\mathrm{Spec}(R) \ar[r] \ar[uur,dashed,"\exists!"'] & \mathbf{S}(\mathscr{Y}) \times_{\mathscr{Y}^2} \mathscr{X}^2,
\end{tikzcd}
\end{equation}
has a lifting. However, the morphism induced by the open dense immersion $X^\circ \to X$
\[
\underline{\mathrm{Map}}_{\mathbf{A}^1}^{\mathbf{G}_m}(\mathbf{A}^2_d,X^\circ \times \mathbf{A}^1) \to \underline{\mathrm{Map}}_{\mathbf{A}^1}^{\mathbf{G}_m}(\mathbf{A}^2_d,X \times \mathbf{A}^1)
\]
is an open immersion (as in \cite[Proposition 2.1.13 (ii)]{Drinfeld-Gm}) but not necessarily dense as the target might be reducible (see, e.g. the example in \cite[Footnote 12 in page 22]{Drinfeld-Gm}). To remedy this note that the good quotient $\mathbf{A}^2_d \to \mathbf{A}^1$ has two sections given by non-vanishings of $x',y'$ respectively and they equip $\underline{\mathrm{Map}}_{\mathbf{A}^1}^{\mathbf{G}_m}(\mathbf{A}^2_d,X \times \mathbf{A}^1)$ with a morphism
\[
p: \underline{\mathrm{Map}}_{\mathbf{A}^1}^{\mathbf{G}_m}(\mathbf{A}^2_d,X \times \mathbf{A}^1) \to \mathbf{A}^1 \times X \times X.
\]
Similarly as in \cite[Proposition 2.1.8]{Drinfeld-Gm}, the pre-image of $\mathbf{A}^1-\{0\}$ under the structure morphism $\underline{\mathrm{Map}}_{\mathbf{A}^1}^{\mathbf{G}_m}(\mathbf{A}^2_d,X \times \mathbf{A}^1) \to \mathbf{A}^1$ is isomorphic to, via $p$, the graph of the morphism
\[
\mathrm{act}_d: \mathbf{G}_m \times X \xrightarrow{(t \mapsto t^d,\mathrm{id}_X)} \mathbf{G}_m \times X \xrightarrow{\text{act}} X,
\]
i.e.,
\[
\begin{tikzcd}
\Gamma(\mathrm{act}_d) \ar[r,hook,"\circ" marking] \ar[d,hook,"/" marking] & \underline{\mathrm{Map}}_{\mathbf{A}^1}^{\mathbf{G}_m}(\mathbf{A}^2_d,X \times \mathbf{A}^1) \ar[d,"p"] \\
\mathbf{G}_m \times X \times X \ar[r,hook,"\circ" marking] \ar[d] & \mathbf{A}^1 \times X \times X \ar[d] \ar[ul,phantom,"\lrcorner"] \\
\mathbf{G}_m \ar[r,hook,"\circ" marking] & \mathbf{A}^1. \ar[ul,phantom,"\lrcorner"]
\end{tikzcd}
\]
Let 
\[
\underline{\mathrm{Map}}_{\mathbf{A}^1}^{\mathbf{G}_m}(\mathbf{A}^2_d,X \times \mathbf{A}^1)^{\flat} \subseteq \underline{\mathrm{Map}}_{\mathbf{A}^1}^{\mathbf{G}_m}(\mathbf{A}^2_d,X \times \mathbf{A}^1)
\]
be the closed subspace given by the closure of $\Gamma(\mathrm{act}_d) \subseteq \mathbf{G}_m \times X \times X$ in $\mathbf{A}^1 \times X \times X$. Note that any morphism $\mathrm{Spec}(K) \to \underline{\mathrm{Map}}_{\mathbf{A}^1}^{\mathbf{G}_m}(\mathbf{A}^2_d,X \times \mathbf{A}^1)$ factors through $\underline{\mathrm{Map}}_{\mathbf{A}^1}^{\mathbf{G}_m}(\mathbf{A}^2_d,X \times \mathbf{A}^1)^{\flat}$, and the same would hold for any extension $\mathrm{Spec}(R) \to \underline{\mathrm{Map}}_{\mathbf{A}^1}^{\mathbf{G}_m}(\mathbf{A}^2_d,X \times \mathbf{A}^1)$. Then the lifting problem \eqref{D3} translates into the following lifting problem:
\begin{equation}\label{D4}
\begin{tikzcd}
\mathrm{Spec}(K) \ar[r] \ar[ddd,hook] & \underline{\mathrm{Map}}_{\mathbf{A}^1}^{\mathbf{G}_m}(\mathbf{A}^2_d,X \times \mathbf{A}^1)^\flat \ar[d] \\
& \underline{\mathrm{Map}}_{\mathbf{A}^1}^{\mathbf{G}_m}(\mathbf{A}^2_d,X \times \mathbf{A}^1) \ar[d] \\
& \mathbf{S}(\mathscr{X}) \ar[d,"{\mathrm{ev}(f)_{(x,1),(1,y)}}"] \\
\mathrm{Spec}(R) \ar[r] \ar[uuur,dashed,"\exists!"'] & \mathbf{S}(\mathscr{Y}) \times_{\mathscr{Y}^2} \mathscr{X}^2.
\end{tikzcd}
\end{equation}
The morphism induced by the open dense immersion $X^\circ  \to X$
\[
\underline{\mathrm{Map}}_{\mathbf{A}^1}^{\mathbf{G}_m}(\mathbf{A}^2_d,X^\circ \times \mathbf{A}^1)^{\flat} \to \underline{\mathrm{Map}}_{\mathbf{A}^1}^{\mathbf{G}_m}(\mathbf{A}^2_d,X \times \mathbf{A}^1)^{\flat}
\]
is an open dense immersion as the target is irreducible. As the right vertical arrow in \eqref{D4} is representable and quasi-compact, this reduces to the lifting problem for a quasi-compact morphism of algebraic spaces, which can be checked relative to the open dense immersion $\underline{\mathrm{Map}}_{\mathbf{A}^1}^{\mathbf{G}_m}(\mathbf{A}^2_d,X^\circ \times \mathbf{A}^1)^{\flat} \to \underline{\mathrm{Map}}_{\mathbf{A}^1}^{\mathbf{G}_m}(\mathbf{A}^2_d,X \times \mathbf{A}^1)^{\flat}$ by \cite[\href{https://stacks.math.columbia.edu/tag/0CME}{Tag 0CME}]{stacks-project}. Thus, in the lifting problem \eqref{D2} we can assume that the morphism $\mathrm{Spec}(K) \to \mathbf{S}(\mathscr{X})$ factors through $\mathbf{S}(h): \mathbf{S}(\mathscr{X}^\circ) \to \mathbf{S}(\mathscr{X})$, as desired.
\item[(b)]
In general, choose a separated representable \'{e}tale cover of $\mathscr{X}$ by quotient stacks $[X/\mathbf{G}_m]$. Since $h: \mathscr{X}^\circ \to \mathscr{X}$ is an open dense immersion, for each chart $[X/\mathbf{G}_m] \to \mathscr{X}$ we have a Cartesian diagram
\[
\begin{tikzcd}
\left[X^\circ/\mathbf{G}_m\right] \ar[r,"\hat{h}"] \ar[d] & \left[X/\mathbf{G}_m\right] \ar[d] \\
\mathscr{X}^\circ \ar[r,"h"'] & \mathscr{X} \ar[ul,phantom,"\lrcorner"]
\end{tikzcd}
\]
for some $\mathbf{G}_m$-invariant open dense subset $X^\circ \subseteq X$. Since $[X/\mathbf{G}_m] \to \mathscr{X}$ is separated (and hence S-complete by \cite[Remark 3.40]{MR4665776}) and $f: \mathscr{X} \to \mathscr{Y}$ is S-complete relative to $h$, the composition $[X/\mathbf{G}_m] \to \mathscr{X} \to \mathscr{Y}$ is S-complete relative to $\hat{h}$. Then by the claim in (a) the composition $[X/\mathbf{G}_m] \to \mathscr{X} \to \mathscr{Y}$ is S-complete. To conclude, it suffices to show in any lifting problem of S-completeness \eqref{D1}, the morphism $\overline{\mathrm{ST}}_R-\{0\} \to \mathscr{X}$ lifts to some chart $[X/\mathbf{G}_m]$, which can be checked up to extensions of DVRs by \cite[Proposition 3.41 (2)]{MR4665776}. For this it suffices to find a point $\eta \in |\mathscr{X}|$ that is a common specialization of the images of the closed points under two maps $\mathrm{Spec}(R) \to \mathscr{X}$, so that the image of $\overline{\mathrm{ST}}_R-\{0\} \to \mathscr{X}$ is contained in the image of some $[X/\mathbf{G}_m] \to \mathscr{X}$. Indeed, we can modify two maps $\mathrm{Spec}(R) \to \mathscr{X}$ to $\mathrm{Spec}(R') \to \mathscr{X}$ (for some extension of DVRs $R \subseteq R'$) such that the images of the closed points are the same but $\mathrm{Spec}(K')$ maps to $\mathscr{X}^\circ \subseteq \mathscr{X}$. By hypothesis there exists a morphism $\overline{\mathrm{ST}}_{R'} \to \mathscr{X}$ extending $\mathrm{Spec}(R') \cup_{\mathrm{Spec}(K')} \mathrm{Spec}(R')=\overline{\mathrm{ST}}_{R'}-\{0\} \to \mathscr{X}$, which implies that the images of the closed points under two maps $\mathrm{Spec}(R') \to \mathscr{X}$ (and hence two maps $\mathrm{Spec}(R) \to \mathscr{X}$) have a common specialization $\eta \in |\mathscr{X}|$. \hfill $\square$
\end{enumerate}
\subsection{Recollections on $\mathbf{G}_m$-actions}
Let $X$ be a variety a field $k$ with a $\mathbf{G}_m$-action $\sigma: \mathbf{G}_m \times X \to X$. Let $\mathbf{K}/k$ be a field. For any point $x \in X(\mathbf{K})$, if its orbit map 
\[
\sigma_x: \mathbf{G}_{m,\mathbf{K}}=\mathbf{G}_m \times \mathrm{Spec}(\mathbf{K}) \xrightarrow{(\mathrm{id},x)} \mathbf{G}_m \times X \xrightarrow{\sigma} X
\]
extends to $\overline{\sigma}_x: \mathbf{P}^1_{\mathbf{K}} \to X$ (e.g. if $X$ is proper), then we call $\overline{\sigma}_x$ the complete orbit map of $x$ and write $x^-:=\overline{\sigma}_x(0)$ and $x^+:=\overline{\sigma}_x(\infty)$. Both of them are $\mathbf{G}_m$-fixed points. 
Consider the following functors of $\mathbf{G}_m$-equivariant morphisms:
\[
X^0:=\underline{\mathrm{Map}}_k^{\mathbf{G}_m}(\mathrm{Spec}(k),X) \text{ and } X^{\pm}:=\underline{\mathrm{Map}}_k^{\mathbf{G}_m}(\mathbf{A}^{\pm 1},X),
\]
where $\mathbf{A}^{\pm 1}=\mathrm{Spec}(k[a])$ is the usual affine line such that $a$ has $\mathbf{G}_m$-weight $\pm 1$ respectively. By \cite[Proposition 1.2.2 and Corollary 1.4.3]{Drinfeld-Gm} these functors are represented by separated schemes of finite type over $k$. Furthermore, there are natural morphisms: a closed immersion (see \cite[Proposition 1.2.2]{Drinfeld-Gm}) $X^0 \to X$, a monomorphism $\mathrm{ev}_1: X^\pm \to X$ (given by evaluation at 1) and an affine (see \cite[Theorem 1.4.2]{Drinfeld-Gm}) morphism $\mathrm{ev}_0: X^\pm \to X^0$ (given by evaluation at 0). Set-theoretically $X^0 \subseteq X$ is the subset of $\mathbf{G}_m$-fixed points and (under the morphism $\mathrm{ev}_1$) $X^\pm=\{x \in X: x^\pm \in X\} \subseteq X$ such that the morphism $\mathrm{ev}_0$ is given by $x \mapsto x^\pm$ (see \cite[\S 1.3.3]{Drinfeld-Gm}). Let $X^0=\bigsqcup_{i \in \pi_0(X^0)} X_i$ be the connected components, then
\[
X_i^\pm:=\mathrm{ev}_0^{-1}(X_i)=\{x \in X: x^\pm \in X_i\} \subseteq X^\pm
\]
is constructible (see, e.g. \cite[Lemma 5 and Remark 6]{MR0704987}) and $X^\pm=\bigsqcup_{i \in \pi_0(X^0)} X_i^\pm$.
\begin{rmk}\label{0948-4}
If $X$ is normal, then it has a $\mathbf{G}_m$-invariant affine open cover by Sumihiro's theorem (see \cite[Corollary 2]{MR337963} and \cite[Theorem 3.8 and Corollary 3.11]{MR387294}). In this case the fixed points locus $X^0$ is also normal (see \cite[Proposition 7.4]{MR332805}) so its connected components $X_i$ are irreducible.
\end{rmk}
One feature about $\mathbf{G}_m$-actions is that they form a canonical flow on $X$. The flow-lines define a natural relation on the set $\pi_0(X^{0})$ of fixed point components.
\begin{defn}[\cite{MR704983}, Definition 1.1]
For any $i,j \in \pi_0(X^0)$, we say
\begin{enumerate}
\item
$X_i$ is \emph{directly less than} $X_j$, written as $X_i <_d X_j$, if there exists a point $x \in X$ such that $x^- \in X_i$ and $x^+ \in X_j$, or equivalently, if $X_i^- \cap X_j^+ \neq \emptyset$.
\item
$X_i$ is \emph{less than} $X_j$, written as $X_i<X_j$, if $X_i<_d \cdots <_d X_j$.
\end{enumerate}
\end{defn}
For $\mathbf{G}_m$-actions there are two distinguished fixed point components.
\begin{lem}[\cite{MR709583}, Lemma A.1 if $k=\mathbf{C}$]\label{source-sink}
Let $X$ be a proper variety over a field $k$ with a $\mathbf{G}_m$-action. There exists a unique fixed point component $X_{\min}$ (resp., $X_{\max}$), called the source (resp., the sink) of $X$, characterized by the property that $X_{\min}^- \subseteq X$ (resp., $X_{\max}^+ \subseteq X$) is dense. If $X$ is in addition normal, then $X_{\min}^- \subseteq X$ (resp., $X_{\max}^+ \subseteq X$) is open dense.
\end{lem}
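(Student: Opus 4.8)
The plan is to treat the two assertions separately: existence and uniqueness of the source and sink is soft, while the open‑density in the normal case is the real content.

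\textbf{Existence and uniqueness.} Since $X$ is proper, every point $x\in X$ admits a complete orbit map $\overline{\sigma}_x\colon\mathbf{P}^1\to X$, so $x^-$ and $x^+$ exist; under the monomorphism $\mathrm{ev}_1\colon X^\pm\to X$ this says that set‑theoretically $|X|=\bigsqcup_{i\in\pi_0(X^0)}\mathrm{ev}_1(|X_i^-|)$, a finite partition into constructible subsets (each $X_i^-$ is constructible, and $\mathrm{ev}_1$ is of finite type, so its images are constructible by Chevalley; the pieces are disjoint because the connected components $X_i\subseteq X^0$ are pairwise disjoint and each $x$ has a single limit $x^-$). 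The generic point $\eta$ of $X$ lies in exactly one piece; call its index $\min$. Then $\mathrm{ev}_1(X_{\min}^-)$ is constructible and contains $\eta$, hence is dense; for $i\neq\min$ the piece $\mathrm{ev}_1(X_i^-)$ omits $\eta$, hence is not dense; and a dense constructible subset of the irreducible $X$ necessarily contains $\eta$. So $X_{\min}$ is the unique fixed component with $X_{\min}^-$ dense, and the same argument with $X^+$ in place of $X^-$ produces $X_{\max}$.

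\textbf{Open‑density when $X$ is normal.} Here normality enters through Sumihiro (Remark \ref{0948-4}): $X$ has a $\mathbf{G}_m$‑invariant affine open cover and $X^0$ is normal, so its connected components $X_i$ are irreducible, in particular closed. I would deduce the statement from the assertion that for $X$ normal each $\mathrm{ev}_1(X_i^-)\subseteq X$ is \emph{locally closed} (a Bia\l{}ynicki‑Birula‑type fact that genuinely needs normality). Granting this, $\mathrm{ev}_1(X_{\min}^-)$ is locally closed and, by the first part, dense in the irreducible $X$; a dense locally closed subset of an irreducible space is open, so $X_{\min}^-$ is open dense. Applying this to $X$ equipped with the inverted $\mathbf{G}_m$‑action gives the statement for $X_{\max}^+$.

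\textbf{The main obstacle} is thus the local‑closedness of the strata $X_i^-$ for normal $X$. The route I would take is to reduce, via the invariant affine cover together with the $\mathbf{G}_m$‑equivariant form of Chow's lemma and Sumihiro's equivariant projective embedding, to a $\mathbf{G}_m$‑stable closed subvariety $X\subseteq\mathbf{P}(V)$ for a $\mathbf{G}_m$‑representation $V$ (replacing $X$ by an equivariant projective modification is harmless for this topological statement, as one may instead argue directly on each invariant affine chart of $X$). On $\mathbf{P}(V)=\mathbf{P}\!\bigl(\bigoplus_n V_n\bigr)$ the decomposition is explicit — the stratum of leading weight $n$ is $\{[\sum_m v_m]: v_n\neq0,\ v_m=0\text{ for }m<n\}$, which is locally closed — and one checks that $\mathrm{ev}_1(X_i^-)$ is a union of pieces of $X$ cut out by such conditions, using that $z^-$ is computed inside the $\mathbf{G}_m$‑stable closed $X$. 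The subtle point is that a single stratum of $X$ may sit inside a non‑minimal stratum of $\mathbf{P}(V)$, so one must also control $\mathrm{ev}_0$ on the relevant $\mathbf{G}_m$‑stable open of $X$, and this is exactly where normality of $X^0$ (Remark \ref{0948-4}) is used. An equivalent, more hands‑on alternative runs through the ``flow surface'': for a trait $\mathrm{Spec}(R)\to X$ one extends the orbit action $\mathbf{G}_m\times\mathrm{Spec}(R)\to X$ to a rational map $\mathbf{P}^1\times\mathrm{Spec}(R)\dashrightarrow X$, resolves its indeterminacy by $\mathbf{G}_m$‑equivariant blow‑ups at fixed points, and tracks the chain of orbit closures appearing over the exceptional fibre to show that $\bigsqcup_{i\neq\min}\mathrm{ev}_1(X_i^-)$ is stable under specialization — and there the delicate step is establishing the monotonicity of that chain, which is once again where normality is needed.
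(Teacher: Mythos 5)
Your existence/uniqueness argument is essentially the paper's: use properness to get $X=\bigsqcup_i X_i^-$, note each piece is constructible, and locate the generic point. That part is fine.

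The gap is in the normal case. You propose to deduce openness of $X_{\min}^-$ from the claim that \emph{every} stratum $X_i^-$ is locally closed when $X$ is normal, calling this a ``Bia\l{}ynicki‑Birula‑type fact.'' That claim is both stronger than what you need and, more importantly, almost certainly false in this generality. Local closedness of all BB strata is a theorem for \emph{smooth} complete varieties; for merely normal ones it can fail, and the reference \cite{MR0704987} that the paper leans on (Konarski, ``A pathological example of an action of $k^*$'') is devoted precisely to such pathologies. The paper itself only claims constructibility of the $X_i^\pm$ in general (citing Konarski's Lemma 5 and Remark 6), and for the normal case it invokes the much weaker, targeted statement — Konarski's Theorem 9, which says that under a $\mathbf{G}_m$-invariant affine open cover (supplied by Sumihiro) the \emph{source} stratum $X_{\min}^-$ and the \emph{sink} stratum $X_{\max}^+$ are open. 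Your two sketched routes to local closedness (reduction to $\mathbf{P}(V)$ via equivariant Chow/Sumihiro, and the ``flow surface'' blow-up argument) would, if they worked, prove the false general statement, so they cannot be made to close without restricting attention to the minimal/maximal stratum. The fix is to replace your Step 2 key lemma by the correct one: openness of $X_{\min}^-$ alone (equivalently $\mathrm{ev}_1\colon U^-\to U$ being open for each invariant affine chart $U$ meeting $X_{\min}$), and either prove that directly or cite Konarski's Theorem 9 as the paper does.
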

\begin{proof}
Since $X$ is proper, we have $X=X^\pm=\bigsqcup_{i \in \pi_0(X^0)} X_i^\pm$. The source and the sink of $X$ are singled out by looking at where the generic point of $X$ locates according to the minus or plus decomposition, i.e., $X_{\min}^- \subseteq X$ and $X_{\max}^+ \subseteq X$ are dense. If $X$ is normal, it has a $\mathbf{G}_m$-invariant affine open cover by Sumihiro's theorem, so $X_{\min}^- \subseteq X$ and $X_{\max}^+ \subseteq X$ are open dense by \cite[Theorem 9]{MR0704987}.
\end{proof}
\section{Characterization for properness}\label{Geom-char}
In this section, we translate the existence criteria of proper moduli spaces in \cite{MR4665776} for quotient stacks from $\mathbf{G}_m$-actions.
\begin{thm}[\cite{MR4665776}, Theorem 5.4]\label{thm0}
Let $\mathscr{X}$ be a locally reductive algebraic stack, of finite presentation and with affine diagonal over a field $k$. Then $\mathscr{X}$ admits a proper adequate moduli space if and only if 
\begin{itemize}
\item[$(\Theta)$]
$\mathscr{X}$ is $\Theta$-reductive,
\item[$(\mathrm{S})$]
$\mathscr{X}$ is S-complete, and
\item[$(\mathrm{E})$]
$\mathscr{X}$ satisfies the existence part of the valuative criterion for properness.
\end{itemize}
\end{thm}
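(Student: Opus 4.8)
The plan is to invoke \cite[Theorem 5.4]{MR4665776} directly; since that result is the technical engine behind everything that follows, it is worth recalling the shape of its proof. It has a (largely formal) necessity direction and a (substantial) sufficiency direction, and the heart of the matter is the construction of the moduli space in the latter.

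\emph{Necessity.} Suppose $\pi\colon\mathscr{X}\to X$ is an adequate moduli space with $X$ proper over $k$. By \cite[Proposition 3.48]{MR4665776} the stack $\mathscr{X}$ is S-complete exactly when $X$ is separated and satisfies $(\mathrm{E})$ exactly when $X$ is universally closed, so $(\mathrm{S})$ and $(\mathrm{E})$ are immediate from properness of $X$. Concretely, $(\mathrm{E})$ is the existence part of the valuative criterion for $X\to\mathrm{Spec}(k)$ transported through $\pi$: lift a map $\mathrm{Spec}(K)\to\mathscr{X}$ along $\pi$, extend the resulting $\mathrm{Spec}(K)\to X$ over the DVR by properness, and re-lift along the universally closed morphism $\pi$; while $(\mathrm{S})$ records that a map out of $\overline{\mathrm{ST}}_R-\{0\}=\mathrm{Spec}(R)\cup_{\mathrm{Spec}(K)}\mathrm{Spec}(R)$, composed with $\pi$, gives two morphisms $\mathrm{Spec}(R)\to X$ which must coincide by separatedness, whence the map descends through the good moduli space $\overline{\mathrm{ST}}_R\to\mathrm{Spec}(R)$ and re-lifts to $\mathscr{X}$. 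Finally $(\Theta)$ is forced by the mere existence of the adequate moduli space, a $\Theta_R:=[\mathbf{A}^1_R/\mathbf{G}_m]$-family being rigid over its good moduli space $\mathrm{Spec}(R)$.

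\emph{Sufficiency.} Here one first \emph{constructs} an adequate moduli space and then proves it is proper. For the construction, the Luna-type local structure theorem for algebraic stacks provides, around every closed point $x\in|\mathscr{X}|$ (whose stabilizer is linearly reductive by local reductivity), an affine \'{e}tale chart $[\mathrm{Spec}(A_x)/G_x]\to\mathscr{X}$, and $[\mathrm{Spec}(A_x)/G_x]$ has adequate moduli space $\mathrm{Spec}(A_x^{G_x})$ by \cite{MR3272912}. One then shows the resulting charts glue to a separated algebraic space $X$ with $\pi\colon\mathscr{X}\to X$ an adequate moduli space; this is precisely where $(\Theta)$ and $(\mathrm{S})$ enter: S-completeness rules out the ``doubling'' pathologies, so that the groupoid cut out by the overlaps descends to an \'{e}tale equivalence relation on the spaces $\mathrm{Spec}(A_x^{G_x})$ — in particular $X$ is separated — and $\Theta$-reductivity ensures the charts are compatible along $\Theta$-filtrations. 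With $X$ in hand, it is of finite type because $\mathscr{X}$ is of finite presentation, it is separated as just noted, and it satisfies the existence part of the valuative criterion by $(\mathrm{E})$: pull a generic point $\mathrm{Spec}(K)\to X$ up to $\mathscr{X}$ (after a finite extension, using that $\pi$ is universally closed and surjective on points), extend over the DVR using $(\mathrm{E})$, and push the extension back down through $\pi$; hence $X$ is proper over $k$.

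I expect the gluing step in the sufficiency direction to be the main obstacle: showing that the \'{e}tale charts produced by the local structure theorem assemble into an honest algebraic space rather than merely a stack, equivalently that at each point of an overlap a Luna-type slice comparison upgrades the induced groupoid to an \'{e}tale equivalence relation of moduli spaces. This is the technical core of \cite{MR4665776}; by contrast, once the adequate moduli space exists, deducing its properness from $(\mathrm{E})$ together with the separatedness supplied by $(\mathrm{S})$ is comparatively routine.
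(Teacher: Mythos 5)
The paper does not give a proof of this statement at all: Theorem \ref{thm0} is quoted verbatim from \cite[Theorem 5.4]{MR4665776} and the proof is delegated entirely to that reference, so your opening plan (``invoke the reference directly'') is exactly the paper's own approach. Your accompanying sketch of the cited proof is broadly faithful but slightly misattributes the role of S-completeness in the sufficiency direction. In \cite{MR4665776} the construction of the moduli space is carried out in their Theorem 4.1 under the hypotheses of $\Theta$-reductivity, uniqueness of closed points in closures, and unpunctured inertia; S-completeness (together with local reductivity) then enters in their \S 5 to \emph{verify} the latter two hypotheses, rather than being used directly to turn the Luna-chart groupoid into an \'{e}tale equivalence relation as your sketch suggests. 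Similarly, the necessity of $(\Theta)$ from existence of an adequate moduli space is not quite the one-line rigidity argument you indicate; it again passes through the local structure theorem and a Langton-type extension argument. None of this affects the correctness of your plan, since the paper treats the theorem as a black box, but the internal shape of the cited proof is closer to ``reduce $(\Theta)+(\mathrm{S})$ to the hypotheses of the gluing theorem'' than to ``use $(\mathrm{S})$ in the gluing.''
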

\begin{rmk}
The quotient stack $[X/\mathbf{G}_m]$ is locally reductive for any normal variety $X$ by Sumihiro's theorem. In this case, an adequate moduli space of $[X/\mathbf{G}_m]$, if exists, is a good moduli space.
\end{rmk}
Let us briefly explain how to get a description of open subsets with proper good quotients using this existence criteria. Let $X$ be a normal proper variety with a $\mathbf{G}_m$-action. If $U \subseteq X$ is a $\mathbf{G}_m$-invariant open subset with a proper good quotient, then the quotient stack $[U/\mathbf{G}_m]$ satisfies the conditions (S) and (E) by Theorem \ref{thm0}. By definition in the level of atlas this amounts to saying that $U$ intersects degenerations of all orbits in $U$ (condition (E)), with a unique closed point (condition (S)). By Theorem \ref{thm:S-com-new} and \cite[\href{https://stacks.math.columbia.edu/tag/0CQM}{Tag 0CQM}]{stacks-project}, for both conditions (E) and (S) it suffices to consider degenerations of the generic orbit closure, i.e., orbit closure of the generic point. A degeneration of the generic orbit closure flows from the source to the sink by Lemma \ref{source-sink}, and may have several components. 

\pagebreak

\begin{figure}[!ht]
\centering
\begin{tikzpicture}

\draw[->,thick] (0,0) -- (1,0.5);
\draw[->,thick] (1,0.5) -- (2.1,0.5);
\draw[->,thick] (2.1,0.5) -- (4.3,1.1);
\draw[->,thick] (4.3,1.1) -- (5.1,0.5);
\draw[->,thick] (5.1,0.5) -- (6,0.7);

\filldraw (0,0) circle (.05)
(1,0.5) circle (.05)
(2.1,0.5) circle (.05)
(4.3,1.1) circle (.05)
(5.1,0.5) circle (.05)
(6,0.7) circle (.05);

\node[left] at (0,0) {$X_{\min}$};
\node[right] at (6,0.7) {$X_{\max}$};

\end{tikzpicture}
\caption{A degeneration of the generic orbit closure}
\end{figure}
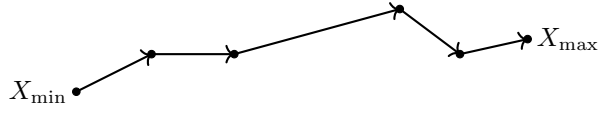

Since its intersection with $U$ is non-empty, $\mathbf{G}_m$-invariant, open and has a unique closed point, all possible configurations are as follows.
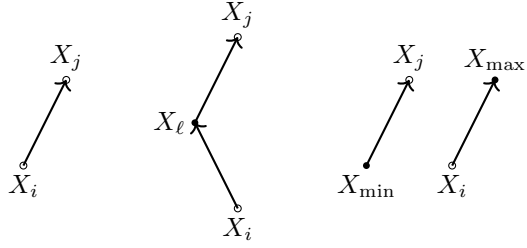
\begin{figure}[!ht]
\centering
\begin{tikzpicture}[scale=0.57]

\draw[->,thick] (0,0) -- (1,2);

\draw[->,thick] (5,-1) -- (4,1);
\draw[->,thick] (4,1) -- (5,3);

\draw[->,thick] (8,0) -- (9,2);

\draw[->,thick] (10,0) -- (11,2);

\draw (0,0) circle (.08)
(1,2) circle (.08)
(5,-1) circle (.08)
(5,3) circle (.08)
(9,2) circle (.08)
(10,0) circle (.08)
;

\fill (4,1) circle (.08)
(8,0) circle (.08)
(11,2) circle (.08)
;

\node[below] at (0,0) {$X_i$};
\node[above] at (1,2) {$X_j$};
\node[below] at (5,-1) {$X_i$};
\node[left] at (4,1) {$X_\ell$};
\node[above] at (5,3) {$X_j$};
\node[below] at (8,0) {$X_{\min}$};
\node[above] at (9,2) {$X_j$};
\node[below] at (10,0) {$X_i$};
\node[above] at (11,2) {$X_{\max}$};

\end{tikzpicture}
\caption{All possible configurations of the intersection}
\label{poss-fig}
\end{figure}

Notably, this phenomenon characterizes properness of the moduli space.
\begin{keyob}\label{2v}
The quotient stack $[U/\mathbf{G}_m]$ satisfies the conditions (E) and (S) if and only if $U$ intersects each degeneration of the generic orbit closure with one of the configurations listed above.
\end{keyob}
This expectation will be made precisely in Theorem \ref{summary}. Therefore degenerations of the generic orbit closure deserve a name and this is the prototype of the so-called smoothable maximal chain of orbits.
\subsection{Chain of orbits}
Let $X$ be a proper variety over a field $k$ with a $\mathbf{G}_m$-action. The notion ``chain of orbits'' is introduced as ``equivariant chain of projective lines (of negative weight)'' in \cite[\S2.B]{MR3758902}.
\begin{defn}\label{def:sm}
A \emph{chain of orbits} in $X$ is a $\mathbf{G}_m$-equivariant morphism
\[
f_{\mathbf{K}}: C_{\mathbf{K}}=\mathbf{P}^1_{\mathbf{K}} \cup_{\infty \sim 0} \cdots \cup_{\infty \sim 0} \mathbf{P}^1_{\mathbf{K}} \to X
\]
for some field $\mathbf{K}/k$, where the $\mathbf{G}_m$-action on $C_{\mathbf{K}}$ is such that on each $\mathbf{P}_{\mathbf{K}}^1=\mathrm{Proj}(\mathbf{K}[\alpha,\beta])$ (with $0=[1:0]$ and $\infty=[0:1]$) it induces the $\mathbf{G}_m$-action of some negative weight $w_i<0$, i.e., it is given by $t.\alpha=t^{w_i+d}\alpha$ and $t.\beta=t^d\beta$.
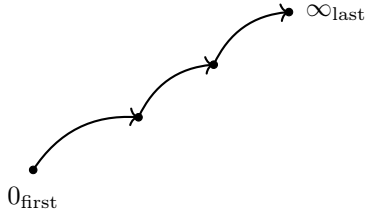
\begin{figure}[!ht]
\centering
\begin{tikzpicture}

\draw[->,thick,bend left=30] (0,0) to (1.4,0.7);
\draw[->,thick,bend left=30] (1.4,0.7) to (2.4,1.4);
\draw[->,thick,bend left=30] (2.4,1.4) to (3.4,2.1);

\filldraw (0,0) circle (.05)
(1.4,0.7) circle (.05)
(2.4,1.4) circle (.05)
(3.4,2.1) circle (.05);

\node[below=0.1cm] at (0,0) {$0_{\text{first}}$};
\node[right=0.1cm] at (3.4,2.1) {$\infty_{\text{last}}$};

\end{tikzpicture}
\caption{A chain of orbits}
\end{figure}
\end{defn}
\begin{defn}\label{key-defn:sm}
A chain of orbits $f_{\mathbf{K}}: C_{\mathbf{K}} \to X$ is said to be
\begin{enumerate}
\item 
\emph{maximal} if $f_\mathbf{K}(0_{\text{first}}) \in X_{\min}$ and $f_\mathbf{K}(\infty_{\text{last}}) \in X_{\max}$.
\item
\emph{U-smoothable} for a $\mathbf{G}_m$-invariant subset $U \subseteq X$, if there exist a DVR $R$ with fraction field $K$ and residue field $\mathbf{K}$, and a $\mathbf{G}_m$-equivariant commutative diagram
\begin{equation}\label{1310-2}
\begin{tikzcd}
\mathrm{Spec}(\mathbf{K}) \ar[d,hook] & C_\mathbf{K} \ar[r,"{f_\mathbf{K}}"] \ar[d,hook] \ar[l] & X \\
\mathrm{Spec}(R) \ar[ur,phantom,"\urcorner"] & C_R \ar[ur,"f_R"'] \ar[l]
\end{tikzcd}
\end{equation}
such that $C_K \cong \mathbf{P}^1_K$ and the generic fiber $f_K: C_K \to X$ is a complete orbit map of some point $x_K \in U(K)$. 

The chain of orbits $f_{\mathbf{K}}$ is \emph{smoothable} if it is $X$-smoothable. 
\end{enumerate}
\end{defn}
\begin{rmk}
There always exists a deformation $C_R$ of $C_\mathbf{K}$ such that the generic fiber is $\mathbf{P}^1_K$, e.g. blowing up $\mathbf{P}^1_R$ at $(\mathrm{Spec}(\mathbf{K}),\infty)$ and then iteratively blowing up $\infty$ in the exceptional $\mathbf{P}^1$'s until we get $C_\mathbf{K}$ in the special fiber. So for the diagram \eqref{1310-2} the main issue is the existence of a $\mathbf{G}_m$-equivariant lifting $f_R$.
\end{rmk}
\begin{rmk}
It is expected that every maximal chain of orbits in $X$ is smoothable, at least when $X$ is smooth projective. We know this is true for projective spaces using an explicit construction.
\end{rmk}
Smoothable maximal chains of orbits are effective as potential test objects since there are enough of them, i.e., $X$ is covered by such objects.
\begin{prop}[\cite{MR709583}, Theorem 0.1.2 and Corollary 0.2.4 if $k=\mathbf{C}$]\label{cover}
Let $X$ be a proper variety over a field $k$ with a $\mathbf{G}_m$-action. Let $U \subseteq X$ be a $\mathbf{G}_m$-invariant dense subset. For any point $x \in X$, there exists a $U$-smoothable maximal chain of orbits in $X$ passing through $x$.
\end{prop}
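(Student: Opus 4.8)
The plan is to produce the chain by degenerating a single long orbit as its base point specializes to $x$, rather than by constructing a global parameter space of all degenerations of orbits as in \cite[(0.1.2)]{MR709583}. First I would make some harmless reductions: replacing $k$ by the residue field at $x$ and $X$ by a suitable component of its base change, we may assume $x$ is a $k$-rational (hence closed) point; by an equivariant Chow lemma and by normalization we may assume $X$ is normal and projective, so that $X^{\circ}:=X_{\min}^{-}\cap X_{\max}^{+}$ is dense by Lemma \ref{source-sink}; and dividing $\mathbf{G}_m$ by the generic (finite) stabilizer we may assume a generic orbit is free. Since $U$ is dense, $U\cap X^{\circ}$ is dense, and I would choose a DVR $R$ with fraction field $K$ and a morphism $\gamma\colon\mathrm{Spec}(R)\to X$ carrying the closed point to $x$ and the generic point to a non-fixed point $x_K\in U(K)\cap X^{\circ}(K)$; when $U$ is open one may simply take $x_K$ to be the generic point of $X$, and this is the only spot where a little extra care is needed when $U$ is merely dense.

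Next I would form the family of complete orbits along $\gamma$. The morphism $\mathbf{G}_m\times\mathrm{Spec}(R)\to X$, $(t,s)\mapsto t\cdot\gamma(s)$, extends by properness to a $\mathbf{G}_m$-equivariant rational map $\overline{\Phi}\colon\mathbf{P}^{1}_{R}\dashrightarrow X$ whose indeterminacy is supported on the two $\mathbf{G}_m$-fixed points of the closed fibre. Resolving $\overline{\Phi}$ by iterated blow-ups of $\mathbf{G}_m$-fixed points supported over the closed point of $\mathrm{Spec}(R)$, I obtain a $\mathbf{G}_m$-equivariant flat family $\pi\colon\mathcal{C}_R\to\mathrm{Spec}(R)$ together with a $\mathbf{G}_m$-equivariant morphism $F\colon\mathcal{C}_R\to X$ such that $\mathcal{C}_K\cong\mathbf{P}^{1}_{K}$ with $F|_{\mathcal{C}_K}$ the complete orbit map of $x_K$, while the special fibre $C_{\mathbf{K}}$ is a chain of $\mathbf{P}^{1}$'s.

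To conclude I would read off the required properties from $(\mathcal{C}_R,F)$. The $0$- and $\infty$-sections of $\mathbf{P}^{1}_{R}$ are $\mathbf{G}_m$-invariant, hence so are their strict transforms, which are $\mathbf{G}_m$-invariant sections of $\pi$ whose generic points $F$ sends into the closed subsets $X_{\min}$ and $X_{\max}$; therefore the two extremal components of $C_{\mathbf{K}}$ have their outer fixed points in $X_{\min}$ and $X_{\max}$, so $C_{\mathbf{K}}\xrightarrow{F}X$ is maximal. The $1$-section is disjoint from all blown-up points, so it survives, meets $C_{\mathbf{K}}$ on the strict transform $C_0$ of the closed fibre, and is carried by $F$ to $\gamma(s_0)=x$, so $x\in F(C_0)$; if $x$ is not fixed then $C_0$ is a genuine orbit component through $x$, and if $x$ is fixed then $F$ contracts $C_0$ (which has negative self-intersection in $\mathcal{C}_R$) and one replaces $C_{\mathbf{K}}$ by its contraction, making $x$ a node of the chain. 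Since $x_K\in U(K)$, the (possibly contracted) family exhibits $U$-smoothability, and Proposition \ref{cover} follows.

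The main obstacle is the claim used above that, after discarding the $F$-contracted components, every remaining component of $C_{\mathbf{K}}$ is an orbit of negative weight — equivalently, that the equivariant resolution produces no component on which $\mathbf{G}_m$ acts trivially and none mapped by $F$ onto a fixed curve. This is a purely surface-theoretic point: it amounts to the assertion that the special fibre of a $\mathbf{G}_m$-equivariant $\mathbf{P}^{1}$-fibration over a DVR with faithful $\mathbf{G}_m$-action on the generic fibre is an equivariant chain of projective lines in the sense of \cite[\S2.B]{MR3758902} — branching is ruled out because intersection points of $\mathbf{G}_m$-invariant curves are fixed and each smooth rational $\mathbf{G}_m$-curve carries exactly two fixed points, and weight-zero components disappear upon passing to a relatively minimal model. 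This local input is precisely what replaces the construction of the Douady space and its numerous properties in \cite{MR709583}; once it is in hand, the argument above is routine.
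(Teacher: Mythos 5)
Your strategy mirrors the paper's exactly in outline: join $x$ to a generic point over a DVR $R$, glue the complete orbit map of the generic point with the orbit map along $R$ into a rational map $\mathbf{P}^1_R\dashrightarrow X$, resolve the indeterminacy (which sits at the two fixed points of the special fibre) by equivariant blow-ups, and read the chain off the special fibre. The preliminary reductions you make (normal projective $X$ via equivariant Chow and normalization, free generic orbit, $k$-rational $x$) are superfluous; the paper works directly with the given proper variety and simply takes $x_K=\eta$ the generic point of $X$, which lies in $U$ whenever $U$ contains a dense open subset, as it does in every application in the paper.

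The step you yourself flag as ``the main obstacle'' is exactly where your argument is incomplete, and the paper does not treat it as a black box. The paper proves that the resolution yields a chain of negative-weight orbits via the explicit computation adapted from \cite[Lemma 2.1]{MR3758902}: reduce to a $\mathbf{G}_m$-invariant ideal $I\subseteq R[y]$ supported at $(y,\pi)$; note that $\mathbf{G}_m$-invariance forces $I$ to be \emph{monomial}; then show by induction on iterated point blow-ups that $I$ principalizes while at every stage the local coordinates $(y^{(i)},\pi^{(i)})$ carry weights $(w_i,v_i)$ with $w_i>v_i$. This guarantees that no weight-zero exceptional component ever appears, which simultaneously rules out branching (every component then has exactly two fixed points, so it meets at most two others) and rules out components mapping onto fixed curves. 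Your sketch handles the nontrivial-weight components correctly by the two-fixed-points argument, but it does not actually dispatch the weight-zero case: such a component is fixed pointwise, so it could a priori meet any number of other components, and invoking a relatively minimal model only helps if $F$ already contracts that component -- which is precisely what must be shown, since $F$ could conceivably map it onto a fixed curve in $X$. To close the gap you should carry out the monomial-ideal blow-up computation rather than defer it; once you do, the rest of your argument is the paper's.
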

\begin{proof}
Let $\eta \in X$ be the generic point, then $\eta \in U \cap X_{\min}^- \cap X_{\max}^+$ by Lemma \ref{source-sink}. For any point $x \in X$, there exist a DVR $R$ with fraction field $K=k(\eta)$ and residue field $\mathbf{K}/k(x)$, and a morphism $x_R: \mathrm{Spec}(R) \to X$ mapping the generic point to $\eta$ and the closed point to $x$ (see \cite[\href{https://stacks.math.columbia.edu/tag/054F}{Tag 054F}]{stacks-project}). The complete orbit map $\overline{\sigma}_\eta: \mathbf{P}^1_K \to X$ of $\eta$ and the orbit map $\sigma_{x_R}: \mathbf{G}_{m,R} \to X$ of $x_R$ glue to a rational map $f: \mathbf{P}^1_R \dashrightarrow X$. Because $X$ is proper, after blowing-up some ideal $\mathscr{I} \subseteq \mathscr{O}_{\mathbf{P}^1_R}$ supported at $(\mathrm{Spec}(\mathbf{K}),0)$ and $(\mathrm{Spec}(\mathbf{K}),\infty)$, the map $f$ extends to a morphism $\tilde{f}: \mathrm{Bl}_{\mathscr{I}}(\mathbf{P}^1_R) \to X$, i.e.,
\[
\begin{tikzcd}
\Phi_\mathbf{K} \ar[r,hook] \ar[d] & \mathrm{Bl}_{\mathscr{I}}(\mathbf{P}^1_R) \ar[d] \ar[dr,"\tilde{f}"] \\
\mathbf{P}^1_\mathbf{K} \ar[r,hook] \ar[d] & \mathbf{P}^1_R \ar[r,dashed,"f"'] \ar[d] \ar[ul,phantom,"\lrcorner"] & X \\
\mathrm{Spec}(\mathbf{K}) \ar[r,hook] & \mathrm{Spec}(R). \ar[ul,phantom,"\lrcorner"]
\end{tikzcd}
\]
Since we blow up at $\mathbf{G}_m$-fixed points, the $\mathbf{G}_m$-action on $\mathbf{P}^1_R$ extends to the blow-up $\mathrm{Bl}_{\mathscr{I}}(\mathbf{P}^1_R)$ such that the morphism $\tilde{f}: \mathrm{Bl}_{\mathscr{I}}(\mathbf{P}^1_R) \to X$ is $\mathbf{G}_m$-equivariant. We claim that the composition $\Phi_\mathbf{K} \to \mathrm{Bl}_{\mathscr{I}}(\mathbf{P}^1_R) \to X$ can be refined to be a chain of orbits. Then we are done since then it is $U$-smoothable maximal and pass through $x$. Our proof below adapts the computations in \cite[Lemma 2.1]{MR3758902}.

Indeed, it suffices to treat the affine case that the ideal $I \subseteq \mathscr{O}_{\mathbf{A}^1_R}=R[y]$ is supported at $(y,\pi)$, where $\pi \in R$ is a uniformizer. There exists an integer $n>0$ such that $(y,\pi)^n \subseteq I$ and since $I$ is $\mathbf{G}_m$-invariant it is homogeneous with respect to the grading for which $y$ has $\mathbf{G}_m$-weight $1$ and $\pi$ has $\mathbf{G}_m$-weight $0$. Every homogeneous generator of $I$ is of the form $y^d\pi^m$ for some integer $m \geq 0$ so that $I$ is monomial. 

Since $(y,\pi)^n \subseteq I$ we may write $I=(y^a,\pi^b,y^{a_i}\pi^{b_i})_{i=1,\ldots,N}$ with $a<n, b<n, a_i+b_i<n$. This ideal becomes principal after successively blowing-up $\infty$ and then blowing-up $\infty$ in the exceptional $\mathbf{P}^1$'s: Blowing-up $(y,\pi)$ we get charts with coordinates $(y,\pi) \mapsto (y'\pi,\pi)$ and $(y,\pi) \mapsto (y,y\pi')$. Then the $\mathbf{G}_m$-weights of $(y',\pi)$ are $(1,0)$ and the $\mathbf{G}_m$-weights of $(y,\pi')$ are $(0,-1)$.

In the first chart the proper transform of $I$ is $(y'^{a}\pi^a,\pi^{b},y'^{a_i}\pi^{a_i+b_i})_{i=1,\ldots,N}$. This ideal is principal if $b=1$ and otherwise equals to an ideal of the form
\[
\pi^c(\pi^{b-c},\text{ mixed monomials of lower total degree}).
\]
A similar computation works in the other chart. By induction this shows that the ideal will become principal after finitely many blow-ups and that in each chart the coordinates $(y^{(i)},\pi^{(i)})$ have $\mathbf{G}_m$-weights $(w_i,v_i)$ with $w_i>v_i$.
\end{proof}
As a result, all fixed point components locate between the source and the sink.
\begin{cor}[\cite{MR709583}, Corollary 0.2.5 if $k=\mathbf{C}$]\label{well-order}
Let $X$ be a proper variety over a field $k$ with a $\mathbf{G}_m$-action. Then $X_{\min}<X_i<X_{\max}$ for each $i \in \pi_0(X^0)$. 
\end{cor}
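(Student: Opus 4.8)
The plan is to deduce the statement directly from Proposition \ref{cover}. Fix $i \in \pi_0(X^0)$ and choose a closed point $x \in X_i$. Applying Proposition \ref{cover} with $U = X$ produces a maximal chain of orbits
\[
f_{\mathbf{K}} \colon C_{\mathbf{K}} = \mathbf{P}^1_{\mathbf{K}} \cup_{\infty \sim 0} \cdots \cup_{\infty \sim 0} \mathbf{P}^1_{\mathbf{K}} \to X
\]
whose image contains $x$, with $f_{\mathbf{K}}(0_{\text{first}}) \in X_{\min}$ and $f_{\mathbf{K}}(\infty_{\text{last}}) \in X_{\max}$. The whole argument is then a bookkeeping of which fixed point components appear along this chain.

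First I would show that the fixed points of $X$ occurring in the image of $f_{\mathbf{K}}$ are exactly the images of the nodes and of the two extreme points. On each component $\mathbf{P}^1_{\mathbf{K}}$ of $C_{\mathbf{K}}$ the induced $\mathbf{G}_m$-action has weight $w < 0$, so $f_{\mathbf{K}}$ restricted to that component is non-constant; its image is the orbit closure of the image of the generic point, a $\mathbf{G}_m$-stable $\mathbf{P}^1$ with exactly two fixed points, and the restricted map $\mathbf{P}^1_{\mathbf{K}} \to \mathbf{P}^1$ is finite and surjective. The preimage of each of the two fixed points is then a non-empty proper $\mathbf{G}_m$-stable closed subset of $\mathbf{P}^1_{\mathbf{K}}$, hence equal to $\{0\}$ or $\{\infty\}$; since the two preimages are disjoint, one is $\{0\}$ and the other $\{\infty\}$. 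Thus a point of a component maps to a fixed point of $X$ if and only if it is the $0$ or the $\infty$ of that component.

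Next I would record the direction of the flow along the chain. Write $q_0 = f_{\mathbf{K}}(0_{\text{first}}),\, q_1, \dots,\, q_n = f_{\mathbf{K}}(\infty_{\text{last}})$ for the images of the successive glueing points, and let $c(\cdot)$ denote the fixed point component containing a given fixed point. With the convention $t.\alpha = t^{w+d}\alpha$, $t.\beta = t^d\beta$, $w<0$, on $\mathrm{Proj}(\mathbf{K}[\alpha,\beta])$ with $0 = [1:0]$ and $\infty = [0:1]$, a point with both coordinates non-zero has $(\cdot)^- = 0$ and $(\cdot)^+ = \infty$ on each component; pushing forward, the image $z_\ell$ of a general point of the $\ell$-th component satisfies $z_\ell^- = q_{\ell-1}$ and $z_\ell^+ = q_\ell$. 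Hence $z_\ell \in X_{c(q_{\ell-1})}^- \cap X_{c(q_\ell)}^+$, i.e. $X_{c(q_{\ell-1})} <_d X_{c(q_\ell)}$ for each $\ell$, while maximality gives $q_0 \in X_{\min}$ and $q_n \in X_{\max}$.

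Finally, since $x$ is a fixed point in the image of $f_{\mathbf{K}}$, the first step forces $x = q_j$ for some $0 \le j \le n$, so that $X_i = X_{c(q_j)}$. Reading off the two subchains $q_0, \dots, q_j$ and $q_j, \dots, q_n$ yields
\[
X_{\min} <_d \cdots <_d X_i <_d \cdots <_d X_{\max},
\]
which is exactly $X_{\min} < X_i < X_{\max}$. The only mildly delicate points are matching the sign of the weight $w$ to the direction of the flow and ruling out that a fixed point of $X$ could sit in the interior of a component of the chain; neither presents a real obstacle, and the whole corollary reduces to Proposition \ref{cover}.
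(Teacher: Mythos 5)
The paper offers no separate proof of this corollary; it is stated as an immediate consequence of Proposition~\ref{cover}, and your argument spells out exactly that deduction. One small slip: non-constancy of $f_{\mathbf{K}}$ on a component does not follow merely from the domain weight $w<0$ being non-trivial (an equivariant map may contract the component to a fixed point), nor need the image of a non-constant component be a smooth $\mathbf{P}^1$ (it could be a nodal rational curve when $z_\ell^-=z_\ell^+$). Neither matters in the end: a contracted component just gives $q_{\ell-1}=q_\ell$, and since $X_i\subseteq X_i^-\cap X_i^+$ one always has $X_i<_d X_i$, so the chain $X_{c(q_0)}<_d\cdots<_d X_{c(q_n)}$ survives unchanged and the conclusion still follows.
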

We can now make \textbf{Expectation} precise. Let $U \subseteq X$ be a $\mathbf{G}_m$-invariant open dense subset. The following subset 
\[
V:=U \cap X_{\min}^- \cap X_{\max}^+ \subseteq U,
\]
consisting of points in $U$ flowing from the source to the sink, is $\mathbf{G}_m$-invariant and dense in $U$ by Lemma \ref{source-sink}. Hereafter, replacing $V$ by a $\mathbf{G}_m$-invariant open subset if necessary, we may assume that $V \subseteq U$ is open dense (which is automatic if $X$ is normal, see Lemma \ref{source-sink}). Then the conditions (S) and (E) for the quotient stack $\mathscr{U}=[U/\mathbf{G}_m]$ can be checked relative to the open dense substack $\mathscr{V}:=[V/\mathbf{G}_m] \subseteq \mathscr{U}$ by Theorem \ref{thm:S-com-new} and \cite[\href{https://stacks.math.columbia.edu/tag/0CQM}{Tag 0CQM}]{stacks-project}. This will give a geometric criterion when the quotient stack $\mathscr{U}$ satisfies (E) or (S). 
\subsection{Consequence of (E)}
The (E)-part of \textbf{Expectation} holds.
\begin{prop}[\cite{MR709583}, Lemma 1.2 if $k=\mathbf{C}$]\label{f1}
Let $X$ be a proper variety over a field $k$ with a $\mathbf{G}_m$-action. For any $\mathbf{G}_m$-invariant open subset $U \subseteq X$, the following are equivalent:
\begin{enumerate}
\item
The quotient stack $\mathscr{U}=[U/\mathbf{G}_m]$ satisfies the condition $\mathrm{(E)}$.
\item
The image of any $U$-smoothable maximal chain of orbits in $X$ intersects $U$.
\end{enumerate}
\end{prop}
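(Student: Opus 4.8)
The plan is to prove the two implications separately. Throughout I will use that $X$, being proper, is separated and satisfies the full valuative criterion of properness, and that a $\mathbf{G}_m$-torsor over the spectrum of a local ring is trivial, so that a morphism from the spectrum of a field or of a DVR to $\mathscr{U}=[U/\mathbf{G}_m]$ is the same as a morphism to $U$ taken up to the $\mathbf{G}_m$-action. I also recall from the discussion preceding the statement that, by \cite[\href{https://stacks.math.columbia.edu/tag/0CQM}{Tag 0CQM}]{stacks-project}, the condition $\mathrm{(E)}$ for $\mathscr{U}$ may be checked relative to the dense open substack $\mathscr{V}=[V/\mathbf{G}_m]$, i.e.\ only for families whose generic point lands in $\mathscr{V}$; concretely, such a family is, after trivializing the torsor, a point $x_K\in V(K)$ where $K=\mathrm{Frac}(R)$ for a DVR $R$.

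For $(1)\Rightarrow(2)$, assume $\mathscr{U}$ satisfies $\mathrm{(E)}$ and let $f_{\mathbf{K}}\colon C_{\mathbf{K}}\to X$ be a $U$-smoothable maximal chain of orbits, witnessed by a $\mathbf{G}_m$-equivariant $f_R\colon C_R\to X$ with $C_R$ proper over $R$, generic fibre $\mathbf{P}^1_K$ on which $f_R$ is $\overline{\sigma}_{x_K}$ for some $x_K\in U(K)$, and closed fibre $C_{\mathbf{K}}$. Applying $\mathrm{(E)}$ to $x_K\colon\mathrm{Spec}(K)\to\mathscr{U}$ yields, after an extension of DVRs $R\subseteq R'$, an extension $\mathrm{Spec}(R')\to\mathscr{U}$, which lifts to some $x_{R'}\in U(R')$ whose generic fibre satisfies $x_{R'}|_{K'}=g\cdot(x_K\otimes K')$ for some $g\in\mathbf{G}_m(K')$ (since the extension restricts over $K'$ to the base change of $x_K$ as a map to $\mathscr{U}$). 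Put $z:=x_{R'}(\text{closed point})\in U$. Base-changing the smoothing to $R'$ to get $f_{R'}\colon C_{R'}\to X$, the closure in $C_{R'}$ of the $K'$-point $g\in\mathbf{G}_m(K')\subseteq\mathbf{P}^1(K')$ of the generic fibre is a section $\gamma\colon\mathrm{Spec}(R')\to C_{R'}$ (by properness and separatedness), and $f_{R'}\circ\gamma$ restricts on the generic point to $g\cdot(x_K\otimes K')=x_{R'}|_{K'}$; separatedness of $X$ forces $f_{R'}\circ\gamma=x_{R'}$. Hence $z=f_{R'}(\gamma(\text{closed point}))$ lies in the image of $f_{\mathbf{K}}\times_{\mathbf{K}}\mathbf{K}'$, so in the image of $f_{\mathbf{K}}$; since also $z\in U$, the image of $f_{\mathbf{K}}$ meets $U$.

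For $(2)\Rightarrow(1)$, by the reduction above it suffices, given a DVR $R$ — which I may take essentially of finite type over $k$, hence excellent — and a point $x_K\in V(K)$, to extend $x_K\colon\mathrm{Spec}(K)\to\mathscr{U}$ over $\mathrm{Spec}(R)$ after a further extension of DVRs. As $X$ is proper, $x_K$ extends to $x_R^{0}\in X(R)$; gluing $\overline{\sigma}_{x_K}$ on $\mathbf{P}^1_K$ with the orbit map of $x_R^{0}$ on $\mathbf{G}_{m,R}\subseteq\mathbf{P}^1_R$ and resolving by the $\mathbf{G}_m$-equivariant blow-up of $\mathbf{P}^1_R$ along a $\mathbf{G}_m$-invariant ideal supported at the closed points of the sections $\{0\}$ and $\{\infty\}$ — exactly as in the proof of Proposition \ref{cover} — gives a $\mathbf{G}_m$-equivariant $\tilde{f}\colon B:=\mathrm{Bl}(\mathbf{P}^1_R)\to X$ whose closed fibre, after contracting the components on which $\tilde{f}$ is constant, becomes a chain of orbits $f_{\mathbf{K}}\colon C_{\mathbf{K}}\to X$. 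This chain is maximal: the strict transform of $\{0\}$ (resp.\ $\{\infty\}$) maps under $\tilde{f}$ to a morphism $\mathrm{Spec}(R)\to X$ with generic point $x_K^{-}\in X_{\min}$ (resp.\ $x_K^{+}\in X_{\max}$), using $x_K\in V\subseteq X_{\min}^{-}\cap X_{\max}^{+}$ and the closedness of $X_{\min},X_{\max}$; and it is $U$-smoothable via $\tilde{f}$ since $x_K\in U$. By hypothesis the image of $f_{\mathbf{K}}$, which equals $\tilde{f}(B_{\mathbf{K}})$, meets $U$, so $W:=\tilde{f}^{-1}(U)$ is a nonempty open subscheme of the integral scheme $B$ meeting the closed fibre $B_{\mathbf{K}}$ and containing the open orbit $\mathbf{G}_{m,K}\subseteq\mathbf{P}^1_K=B_K$ (as $U$ is $\mathbf{G}_m$-invariant). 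Choosing $c_0\in W\cap B_{\mathbf{K}}$ and, by \cite[\href{https://stacks.math.columbia.edu/tag/054F}{Tag 054F}]{stacks-project}, a DVR $R'$ with $\mathrm{Spec}(R')\to W$ sending the closed point to $c_0$ and the generic point to the generic point $\eta_B$ of $B$, composition with $\tilde{f}$ gives $x_{R'}\in U(R')$; since $\eta_B$ lies in $\mathbf{G}_{m,K}$ and $\tilde{f}$ is the orbit map of $x_K$ there, $x_{R'}|_{K'}$ lies in the $\mathbf{G}_m(K')$-orbit of $x_K\otimes K'$, so the resulting $\mathrm{Spec}(R')\to\mathscr{U}$ restricts on the generic point to the base change of $\mathrm{Spec}(K)\xrightarrow{x_K}\mathscr{U}$. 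This verifies $\mathrm{(E)}$.

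I expect the main obstacle to be the extraction step in $(2)\Rightarrow(1)$: one must produce a DVR point of $W=\tilde{f}^{-1}(U)$ whose closed point records that the chain meets $U$ while its generic point still recovers a $\mathbf{G}_m$-translate of the original $x_K$, so that the lift solves the valuative criterion for $\mathscr{U}$ rather than for an unrelated map; keeping the nested DVR extensions and the $\mathbf{G}_m$-ambiguity consistent — and checking that $W$ is integral, flat over $R$, and meets both fibres so that \cite[\href{https://stacks.math.columbia.edu/tag/054F}{Tag 054F}]{stacks-project} applies — is where the care goes. By contrast the geometric core, that the blow-up produces a $U$-smoothable maximal chain through the relevant point, is imported essentially verbatim from the proof of Proposition \ref{cover}.
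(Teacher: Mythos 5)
Your argument is correct and, for $(2)\Rightarrow(1)$, essentially the paper's: reduce via \cite[\href{https://stacks.math.columbia.edu/tag/0CQM}{Tag 0CQM}]{stacks-project} to families through $\mathscr{V}$, extend $x_K$ to an $R$-point of $X$ by properness of $X$, glue with the complete orbit map and resolve by an equivariant blow-up as in Proposition~\ref{cover} to produce a $U$-smoothable maximal chain, then extract the lift from a DVR point of $W=\tilde f^{-1}(U)$. Where the paper tersely says ``choose a section'' of the blow-up, your invocation of \cite[\href{https://stacks.math.columbia.edu/tag/054F}{Tag 054F}]{stacks-project} after a DVR extension (allowed for checking (E)) makes the step explicit; note you do not need $W$ to be flat over $R$ for that tag, only that $c_0$ specialize $\eta_B$ inside the open $W$, which holds automatically. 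Your $(1)\Rightarrow(2)$ direction is a genuine and welcome simplification of the paper's. The paper applies (E) to obtain $x'_R$, builds a \emph{second} proper model $u_R\colon\mathrm{Bl}_{\mathscr I}(\mathbf P^1_R)\to X$ from $\sigma_{x'_R}$, and then deduces $u_{\mathbf K}(\Phi_{\mathbf K})=f_{\mathbf K}(C_{\mathbf K})$ from the equality of total images $\mathrm{Im}(u_R)=\mathrm{Im}(f_R)$ — a comparison of special fibres of two different proper models that is rather quickly asserted. You instead stay inside the given smoothing: the gauge $g\in\mathbf G_m(K')$ is a $K'$-point of $C_{K'}\cong\mathbf P^1_{K'}$ whose closure is an $R'$-section $\gamma$ by properness of $C_{R'}/R'$, and separatedness of $X$ forces $f_{R'}\circ\gamma=x_{R'}$, so the closed point of $x_{R'}$ lands at once in $U$ and in the image of the chain. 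This avoids the auxiliary model and the image-comparison entirely. One small remark: you need not contract components on which $\tilde f$ is constant when forming the chain in $(2)\Rightarrow(1)$ — Definition~\ref{def:sm} constrains only the $\mathbf G_m$-weights on the components of $C_{\mathbf K}$, not the map, and the blow-up computation in Proposition~\ref{cover} already yields negative weights — so $\Phi_{\mathbf K}\to X$ is already a chain of orbits without further contraction.
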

\begin{proof}
By \cite[\href{https://stacks.math.columbia.edu/tag/0CQM}{Tag 0CQM}]{stacks-project}, the stack $\mathscr{U}$ satisfies the condition (E) if and only if 
\[
(1') \quad \mathscr{U} \text{ satisfies the condition (E) relative to } \mathscr{V} \subseteq \mathscr{U}.
\]
\begin{enumerate}
\item[$(1') \Rightarrow (2)$.]
For any $U$-smoothable maximal chain of orbits $f_{\mathbf{K}}: C_{\mathbf{K}} \to X$ in $X$, we show $U \cap f_{\mathbf{K}}(C_{\mathbf{K}}) \neq \emptyset$. By definition there exist a DVR $R$ with fraction field $K$ and residue field $\mathbf{K}$, and a $\mathbf{G}_m$-equivariant commutative diagram \eqref{1310-2} such that the generic fiber $f_K: C_K \to X$ is a complete orbit map of some point $x_K \in V(K)$. By hypothesis for the morphism 
\[
\mathrm{Spec}(K) \xrightarrow{x_K} V \twoheadrightarrow \mathscr{V} \subseteq \mathscr{U}
\]
there exists a morphism $x'_R: \mathrm{Spec}(R) \to U$ such that $x'_K=\eta_K.x_K$ for some $\eta_K \in \mathbf{G}_m(K)$. Consider the following commutative diagram
\[
\begin{tikzcd}
C_R \ar[r,"f_R"] & X & \mathrm{Bl}_{\mathscr{I}}(\mathbf{P}^1_R) \ar[l,"u_R"'] \ar[d]  \\
C_K \ar[u,hook] \ar[r,"f_K"] & U \ar[u,hook] & \mathbf{P}^1_R \ar[ul,dashed] \\
\mathbf{P}^1_K \ar[u,equal] & \mathbf{G}_{m,K} \ar[l,hook] \ar[r,hook] & \mathbf{G}_{m,R} \ar[ul,"{\sigma_{x'_R}}"',near end] \ar[u,hook]
\end{tikzcd}
\]
where $\mathscr{I} \subseteq \mathscr{O}_{\mathbf{P}^1_R}$ is some ideal supported at $(\mathrm{Spec}(\mathbf{K}),0)$ and $(\mathrm{Spec}(\mathbf{K}),\infty)$ such that the rational map $\mathbf{P}_R^1 \dashrightarrow X$ extends to a morphism $\mathrm{Bl}_{\mathscr{I}}(\mathbf{P}^1_R) \to X$. Note that $\mathrm{Im}(u_R)=\mathrm{Im}(f_R)$ since both contain the image of $\mathbf{G}_{m,K}$ as a dense subset. Denote by $\Phi_{\mathbf{K}} \subseteq \mathrm{Bl}_{\mathscr{I}}(\mathbf{P}^1_R)$ the special fiber, then $u_{\mathbf{K}}(\Phi_{\mathbf{K}})=f_{\mathbf{K}}(C_{\mathbf{K}})$ and hence 
\[
U \cap f_{\mathbf{K}}(C_{\mathbf{K}})=U \cap u_{\mathbf{K}}(\Phi_{\mathbf{K}}) \neq \emptyset.
\]
\item[$(2) \Rightarrow (1')$.]
For every DVR $R$ with fraction field $K$, any commutative diagram
\[
\begin{tikzcd}
\mathrm{Spec}(K) \ar[d,hook] \ar[r] & \mathscr{V} \ar[r,hook] & \mathscr{U} \ar[d] \\
\mathrm{Spec}(R) \ar[urr,dashed] \ar[rr] & & \mathrm{Spec}(k)
\end{tikzcd}
\]
of solid arrows, we show there exists a dotted arrow filling in. Indeed, the morphism $\mathrm{Spec}(K) \to \mathscr{V}$ lifts to the altas $x_K: \mathrm{Spec}(K) \to V$ and it extends to a morphism $x_R: \mathrm{Spec}(R) \to X$ since $X$ is proper. Consider the following commutative diagram
\[
\begin{tikzcd}
\mathrm{Bl}_{\mathscr{I}}(\mathbf{P}^1_R) \ar[d] \ar[dr,"u_R"] \\
\mathbf{P}^1_R \ar[r,dashed] & X \\
\mathbf{G}_{m,R} \ar[u,hook] \ar[ur,"{\sigma_{x_R}}"']
\end{tikzcd}
\]
As in the proof of Proposition \ref{cover}, the special fiber $u_\mathbf{K}: \Phi_\mathbf{K} \to X$ of $u_R$ can be refined as a chain of orbits in $X$, which is $U$-smoothable and maximal by construction. Then by hypothesis $U \cap u_\mathbf{K}(\Phi_\mathbf{K}) \neq \emptyset$. Choose a section $s: \mathrm{Spec}(R) \to \mathrm{Bl}_{\mathscr{I}}(\mathbf{P}^1_R)$ such that the composition 
\[
\zeta: \mathrm{Spec}(R) \xrightarrow{s} \mathrm{Bl}_{\mathscr{I}}(\mathbf{P}^1_R) \xrightarrow{u_R} X
\]
maps the closed point $\mathrm{Spec}(\mathbf{K})$ to $U \cap u_\mathbf{K}(\Phi_\mathbf{K})$, then $\zeta$ factors through the open subset $U \subseteq X$ and $\bar{\zeta}: \mathrm{Spec}(R) \to U \to \mathscr{U}$ extends $\mathrm{Spec}(K) \to \mathscr{V}$.
\end{enumerate}
\end{proof}
Furthermore, the condition (E) for the quotient stack $\mathscr{U}=[U/\mathbf{G}_m]$ implies certain absorption properties of the atlas.
\begin{lem}[\cite{MR755486}, Proposition 2.1 if $k=\mathbf{C}$]\label{A^0}
Let $X$ be a normal variety over a field $k$ with a $\mathbf{G}_m$-action. Let $U \subseteq X$ be a $\mathbf{G}_m$-invariant open subset such that the quotient stack $\mathscr{U}=[U/\mathbf{G}_m]$ satisfies the condition $\mathrm{(E)}$. For any connected component $X_i$ of $X^0$, we have $U \cap X_i \neq \emptyset$ implies that $X_i \subseteq U$.
\end{lem}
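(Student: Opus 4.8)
The plan is to derive the statement from just two ingredients: the separatedness of $X$, and the defining lifting property of condition $\mathrm{(E)}$. The only structural fact about the $\mathbf{G}_m$-action that enters is that the fixed component $X_i$ is irreducible.

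Suppose for contradiction that $X_i\cap U\neq\emptyset$ but some point $q\in X_i$ does not lie in $U$. Since $X$ is normal, the fixed locus $X^0$ is normal by Remark \ref{0948-4}, so the connected component $X_i$ is irreducible; write $\xi_i$ for its generic point. The intersection $X_i\cap U$ is a nonempty open subscheme of the irreducible scheme $X_i$, hence contains $\xi_i$; in particular $\xi_i\in U$. By \cite[\href{https://stacks.math.columbia.edu/tag/054F}{Tag 054F}]{stacks-project} I choose a DVR $R$ with fraction field $K$ and a morphism $\phi\colon\mathrm{Spec}(R)\to X_i$ sending the generic point to $\xi_i$ and the closed point to $q$. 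Composing with the closed immersion $X_i\hookrightarrow X^0\hookrightarrow X$, I view $\phi$ as a morphism $\mathrm{Spec}(R)\to X$; its restriction $x_K\colon\mathrm{Spec}(K)\to X$ has image $\{\xi_i\}\subseteq U$, so $x_K$ factors through $U$, and it factors through $X^0$, so it is fixed by $\mathbf{G}_m$.

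Next I feed into condition $\mathrm{(E)}$ the square formed by the composite $\mathrm{Spec}(K)\xrightarrow{x_K}U\twoheadrightarrow\mathscr{U}$ and the structure morphism $\mathrm{Spec}(R)\to\mathrm{Spec}(k)$: after possibly replacing $R$ by an extension of DVRs (and $\phi$, $x_K$ by their base changes; the closed point of $\mathrm{Spec}(R)$ still maps to $q$), there is a morphism $g\colon\mathrm{Spec}(R)\to\mathscr{U}$ restricting to $x_K$ over $K$. Because $\mathrm{Pic}(R)=0$, every $\mathbf{G}_m$-torsor over $\mathrm{Spec}(R)$ is trivial, so $g$ lifts along the atlas $U\twoheadrightarrow\mathscr{U}$ to a morphism $y\colon\mathrm{Spec}(R)\to U$. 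Its restriction $y|_K$ differs from $x_K$ by the action of an element of $\mathbf{G}_m(K)$, which acts trivially because $x_K$ factors through the fixed locus $X^0$; hence $y|_K=x_K$ as morphisms $\mathrm{Spec}(K)\to X$.

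Finally, $y$ and $\phi$ are two morphisms $\mathrm{Spec}(R)\to X$ that agree on the generic point $\mathrm{Spec}(K)$, which is schematically dense in $\mathrm{Spec}(R)$; since $X$ is separated, $y=\phi$. Therefore $\phi$ factors through the open subscheme $U\subseteq X$, so its image, which contains $q$, is contained in $U$ — contradicting $q\notin U$. Hence $X_i\subseteq U$. The argument is short and I do not anticipate a genuine obstacle; the two places that deserve care are the passage from a morphism to $\mathscr{U}$ to a genuine lift to the atlas $U$ over the DVR $R$ (which works precisely because $\mathrm{Pic}$ of a DVR vanishes) and the remark that $x_K$ takes values in $X^0$, which is exactly what makes the leftover $\mathbf{G}_m(K)$-ambiguity in that lift disappear. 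Everything else is the usual uniqueness in the valuative criterion for separated schemes.
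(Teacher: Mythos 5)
Your proof is correct and follows essentially the same route as the paper: both produce a DVR degeneration inside $X_i$, apply condition (E) to extend it into $\mathscr{U}$, lift to the atlas $U$ (via triviality of $\mathbf{G}_m$-torsors over a DVR), and conclude by separatedness of $X$. The only difference is cosmetic — the paper verifies the valuative criterion for closedness of $U\cap X_i\subseteq X_i$, whereas you argue by contradiction with a specific point $q$ — and you helpfully spell out why the $\mathbf{G}_m(K)$-ambiguity in the atlas lift is harmless (namely, $x_K$ lands in $X^0$), a point the paper leaves implicit.
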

\begin{proof}
The connected component $X_i$ is irreducible by Remark \ref{0948-4} and $U \cap X_i \subseteq X_i$ is open, we show it is closed via valuative criterion for universal closedness (see \cite[\href{https://stacks.math.columbia.edu/tag/05JY}{Tag 05JY}]{stacks-project}). For every DVR $R$ with fraction field $K$ and any commutative diagram
\[
\begin{tikzcd}
\mathrm{Spec}(K) \ar[d,hook] \ar[r,"u_K"] & U \cap X_i \ar[d,hook] \\
\mathrm{Spec}(R) \ar[r,"u_R"'] & X_i
\end{tikzcd}
\]
of solid arrows, we have the following commutative diagram
\[
\begin{tikzcd}
\mathrm{Spec}(K) \ar[d,hook] \ar[r,"u_K"] & U \cap X_i \ar[r] & \mathscr{U} \ar[d] \\
\mathrm{Spec}(R) \ar[urr,dashed] \ar[rr] & & \mathrm{Spec}(k).
\end{tikzcd}
\]
Since $\mathscr{U}$ satisfies the condition (E) the dotted arrow exists, which can further lift to the atlas $u'_R: \mathrm{Spec}(R) \to U$ such that $u'_K=u_K$. Then $u_R=u'_R$ by separatedness of $X$ and $u_R$ factors through $U \cap X_i \subseteq X_i$.
\end{proof}
This can be generalized as follows.
\begin{lem}[\cite{MR709583}, Lemma 1.1.1 if $k=\mathbf{C}$]\label{new1}
Let $X$ be a normal variety over a field $k$ with a $\mathbf{G}_m$-action. Let $U \subseteq X$ be a $\mathbf{G}_m$-invariant open subset such that the quotient stack $\mathscr{U}=[U/\mathbf{G}_m]$ satisfies the condition $\mathrm{(E)}$. For any $\mathbf{G}_m$-invariant irreducible subset $\Sigma \subseteq X_i^- \cap X_j^+$, we have $U \cap \Sigma \neq \emptyset$ implies that $\Sigma \subseteq U$.
\end{lem}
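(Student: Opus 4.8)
The plan is to reduce the statement, via the existence criterion (E), to a single valuative lifting along a DVR centred at the target point $\sigma$, where the troublesome $\mathbf{G}_m$-indeterminacy becomes harmless. I do \emph{not} expect to need Proposition \ref{cover} or Proposition \ref{f1}: condition (E), Lemma \ref{A^0}, and two elementary facts suffice.

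\emph{Reductions.} We may assume $U\neq\emptyset$, hence $U$ is dense in the variety $X$. Fix $\sigma\in\Sigma$; we must show $\sigma\in U$. Let $\overline{\Sigma}$ be the closure of $\Sigma$ in $X$, an irreducible $\mathbf{G}_m$-invariant subvariety which is proper (so separated) over $k$, and let $\xi$ be its generic point. Since $U\cap\overline{\Sigma}\supseteq U\cap\Sigma\neq\emptyset$ is open and non-empty in the irreducible $\overline{\Sigma}$, it is dense, so $\xi\in U$; since moreover $\Sigma$ is contained in the constructible set $X_i^-$ and is dense in $\overline{\Sigma}$, the generic point $\xi$ lies in $X_i^-$, i.e.\ $\xi^-\in X_i$, and likewise $\xi^+\in X_j$. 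I also record the elementary fact that for any $x\in X_i^-\cap X_j^+$, if $x^-\in U$ (resp.\ $x^+\in U$) then $x\in U$: the preimage of $U$ under the $\mathbf{G}_m$-equivariant extension $\mathbf{A}^1\to X$ of the orbit map of $x$ (which sends $0$ to $x^-$, resp.\ to $x^+$) is a $\mathbf{G}_m$-invariant open of $\mathbf{A}^1$ containing $0$, hence is all of $\mathbf{A}^1$ and in particular contains $1$.

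\emph{The core argument.} Choose, using \cite[\href{https://stacks.math.columbia.edu/tag/054F}{Tag 054F}]{stacks-project}, a DVR $R$ with fraction field $L$ together with a morphism $\mathrm{Spec}(R)\to\overline{\Sigma}$ carrying the generic point to $\xi$ and the closed point to $\sigma$. By condition (E) the composite $\mathrm{Spec}(L)\to\overline{\Sigma}\hookrightarrow U\to\mathscr{U}$ extends, after a finite extension $R\subseteq R'$ (with $L'=\mathrm{Frac}(R')$ and uniformiser $\pi'$), to a morphism $\mathrm{Spec}(R')\to\mathscr{U}$; since the $\mathbf{G}_m$-torsor $U\to\mathscr{U}$ is trivial over the DVR $R'$, this lifts to a morphism $y\colon\mathrm{Spec}(R')\to U$. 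Over $L'$ the two liftings $y|_{L'}$ and the base change $\xi_{L'}$ of the generic point $\mathrm{Spec}(L)\to\overline{\Sigma}$ differ by some $\eta\in\mathbf{G}_m(L')$, say $y|_{L'}=\eta\cdot\xi_{L'}$; set $m:=\mathrm{val}_{R'}(\eta)$ and replace $y$ by $z:=(\eta\,\pi'^{-m})^{-1}\cdot y$, which still lands in $U$ by $\mathbf{G}_m$-invariance and satisfies $z|_{L'}=\pi'^m\cdot\xi_{L'}$.

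\emph{Conclusion by cases on $m$.} If $m=0$ then $z|_{L'}=\xi_{L'}$, so by separatedness of $\overline{\Sigma}$ the morphism $z$ coincides with the base change of $\mathrm{Spec}(R)\to\overline{\Sigma}$; its closed point therefore maps to $\sigma$, and as $z$ lands in $U$ we conclude $\sigma\in U$. If $m>0$ (resp.\ $m<0$), then, $\xi$ lying in $X^-$ (resp.\ $X^+$), the element $\pi'^m$ (resp.\ $\pi'^{-m}$) lies in $R'$ and defines an $R'$-point of $\mathbf{A}^1$ hitting $0$ at the closed point; composing with the equivariant extension of the orbit map of $\xi$ and using separatedness of $X$ shows that the closed point of $z$ maps to $\xi^-$ (resp.\ $\xi^+$), so $U\cap X_i\neq\emptyset$ (resp.\ $U\cap X_j\neq\emptyset$), whence $X_i\subseteq U$ (resp.\ $X_j\subseteq U$) by Lemma \ref{A^0}. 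Since $\sigma\in X_i^-\cap X_j^+$ we get $\sigma^-\in X_i\subseteq U$ (resp.\ $\sigma^+\in X_j\subseteq U$), and the elementary fact above gives $\sigma\in U$. The main obstacle throughout is precisely the $\mathbf{G}_m$-indeterminacy $\eta$: unlike in Lemma \ref{A^0}, the generic point $\xi$ is not a fixed point, so one cannot force the (E)-extension to match the given family. The point of centring the DVR at $\sigma$ inside $\overline{\Sigma}$ (rather than, say, inside an orbit closure) is exactly that the neutral value $m=0$ already yields $\sigma\in U$, while the non-zero values of $m$ push a fixed point into $U$ and are absorbed by Lemma \ref{A^0}.
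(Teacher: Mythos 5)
Your overall strategy mirrors the paper's: a single DVR test centred at the target point, condition (E) to produce an extension in $U$, and a case analysis on the valuation $m$ of the $\mathbf{G}_m$-indeterminacy. The $m=0$ case is fine. The gap is in the $m\neq 0$ branches. You propose to compose the $R'$-point $a\mapsto\pi'^m$ of $\mathbf{A}^1$ with ``the equivariant extension of the orbit map of $\xi$,'' but that extension $\hat{\sigma}_\xi\colon\mathbf{A}^1_{k(\xi)}\to X$ is only defined over the residue field $k(\xi)=L$, and there is no ring map $L\to R'$ along which to base-change it; the composition you invoke does not exist, so separatedness of $X$ cannot be applied to identify it with $z$. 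Relatedly, the assertion that the closed point of $z$ ``maps to $\xi^-$'' is not the right statement: the closed point of a morphism out of $\mathrm{Spec}(R')$ is a \emph{specialization} of the generic image, so it should be $\sigma^-$, which specializes $\xi^-$.

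The correct fix—and what the paper actually does—is to extend the orbit map of the whole family $u_R\colon\mathrm{Spec}(R)\to X$ rather than only of its generic fibre. Since the image of $u_R$ is $\{\xi,\sigma\}\subseteq\Sigma\subseteq X_i^-\cap X_j^+$, the morphism $u_R$ lifts along the monomorphism $X^-\times_X X^+\to X$, giving a complete orbit map $\overline{\sigma}_{u_R}\colon\mathbf{P}^1_R\to X$ defined over $R$ with image in $X_i\cup\Sigma\cup X_j$. Base-change to $R'$ and compose with $a\mapsto\pi'^m$: this is an $R'$-point of $X$ agreeing with $z$ over $L'$, hence equal to $z$ by separatedness of $X$, and its closed point is $\sigma^-$ for $m>0$ (resp.\ $\sigma^+$ for $m<0$). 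From there your conclusion goes through; in fact once you know $\sigma^-\in U$ (resp.\ $\sigma^+\in U$) you can finish directly with your elementary fact, so the detour through Lemma \ref{A^0} is unnecessary. For comparison, the paper packages the same mechanics as a valuative-criterion proof that $U\cap\Sigma$ is closed in $\Sigma$, and handles the case $U\cap X_i\neq\emptyset$ or $U\cap X_j\neq\emptyset$ separately at the outset (your $m\neq 0$ branches effectively re-derive this); but once the gap above is filled the two arguments are essentially the same, with the complete orbit map $\overline{\sigma}_{u_R}$ over the DVR being the key device in both.
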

\begin{rmk}\label{new-2}
There exist examples where $X_i^- \cap X_j^+$ is disconnected, even if $X$ is smooth projective, see \cite[\S 2]{MR704991}. In particular, Lemma \ref{new1} tells that if $U \subseteq X$ admits a proper good quotient, then
\[
U=\bigsqcup_{\Sigma \subseteq X_i^- \cap X_j^+ \text{ irred comp} \atop \text{s.t. } U \cap \Sigma \neq \emptyset} \Sigma,
\]
i.e., irreducible components of $X_i^- \cap X_j^+$ are building blocks for $\mathbf{G}_m$-invariant open subsets of $X$ with proper good quotients. However, Theorem \ref{onlyif} indicates that such open subsets are actually built out of $X_i^- \cap X_j^+$, instead of their irreducible components. It is still mysterious why they do not depend on the irreducible components.
\end{rmk}
\begin{proof}
If $U \cap X_i \neq \emptyset$ or $U \cap X_j \neq \emptyset$, then $X_i \subseteq U$ or $X_j \subseteq U$ by Lemma \ref{A^0}, and hence $X_i^- \subseteq U$ or $X_j^+ \subseteq U$. In either case we have $\Sigma \subseteq X_i^- \cap X_j^+ \subseteq U$. 

Suppose $U \cap X_i=\emptyset=U \cap X_j$. Similarly as before we show $U \cap \Sigma \subseteq \Sigma$ is closed. For every DVR $R$ with fraction field $K$ and residue field $\kappa$, and any commutative diagram
\[
\begin{tikzcd}
\mathrm{Spec}(K) \ar[d,hook] \ar[r,"u_K"] & U \cap \Sigma \ar[d,hook] \\
\mathrm{Spec}(R) \ar[r,"u_R"'] & \Sigma
\end{tikzcd}
\]
of solid arrows, as in the proof of Lemma \ref{A^0} we have a morphism $u'_R: \mathrm{Spec}(R) \to U$ such that $u'_K=\eta_K.u_K$ for some $\eta_K \in \mathbf{G}_m(K) \subseteq \mathbf{P}^1(K)$. Let $\eta_R \in \mathbf{P}^1(R)$ be the element extending $\eta_K$, then the complete orbit map of $u_R$ locates in
\[
\overline{\sigma}_{u_R}: \mathbf{P}^1_R \to X_i \cup \Sigma \cup X_j
\]
and the composition
\[
u''_R: \mathrm{Spec}(R) \xrightarrow{(\eta_R,\mathrm{id})} \mathbf{P}^1_R \xrightarrow{\overline{\sigma}_{u_R}} X_i \cup \Sigma \cup X_j
\]
coincides with $u'_R$ on $\mathrm{Spec}(K)$, so $u''_R=u'_R$ by separatedness of $X$. This means $u''_R$ factors through $U \cap (X_i \cup \Sigma \cup X_j)=U \cap \Sigma$ and hence $u''_\kappa=\eta_\kappa.u_\kappa \in U \cap \Sigma$. If $\eta_\kappa \in (\mathbf{P}^1-\mathbf{G}_m)(\kappa)$, then $U \cap X_i \neq \emptyset$ or $U \cap X_j \neq \emptyset$, a contradiction. Therefore $\eta_\kappa \in \mathbf{G}_m(\kappa)$ and $u_\kappa \in U \cap \Sigma$, which implies that $u_R$ factors through $U \cap \Sigma \subseteq \Sigma$.
\end{proof}
\begin{cor}[\cite{MR755486}, Lemma 2.6 if $k=\mathbf{C}$]\label{(1,r)-in-U}
Let $X$ be a normal proper variety over a field $k$ with a $\mathbf{G}_m$-action. Let $U \subseteq X$ be a $\mathbf{G}_m$-invariant open subset such that the quotient stack $\mathscr{U}=[U/\mathbf{G}_m]$ satisfies the condition $\mathrm{(E)}$. Then $X_{\min}^- \cap X_{\max}^+ \subseteq U$. In particular, every smoothable maximal chain of orbits in $X$ is $U$-smoothable.
\end{cor}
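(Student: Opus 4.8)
The plan is to obtain the inclusion $X_{\min}^-\cap X_{\max}^+\subseteq U$ directly from Lemma~\ref{new1}, and then to feed it back into the definition of $U$-smoothability to get the ``in particular''. First I would set $\Sigma:=X_{\min}^-\cap X_{\max}^+$; it is $\mathbf{G}_m$-invariant, and since $X$ is normal and proper, Lemma~\ref{source-sink} makes $X_{\min}^-\subseteq X$ and $X_{\max}^+\subseteq X$ open dense, so $\Sigma$ is open dense in the irreducible variety $X$, hence irreducible, and it contains the generic point $\eta$ of $X$. As $U$ is a nonempty $\mathbf{G}_m$-invariant open subset of $X$ it is dense, so $\eta\in U$ and $U\cap\Sigma\ne\emptyset$; Lemma~\ref{new1}, applied with $i=\min$ and $j=\max$ (so that $\Sigma=X_i^-\cap X_j^+$), then gives $\Sigma=X_{\min}^-\cap X_{\max}^+\subseteq U$.

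For the second statement I would start from a smoothable maximal chain of orbits $f_\mathbf{K}:C_\mathbf{K}\to X$ together with a diagram \eqref{1310-2} realizing its $X$-smoothability over a DVR $R$, whose generic fibre is the complete orbit map $f_K:\mathbf{P}^1_K\to X$ of some $x_K\in X(K)$, taking for $C_R$ the integral flat deformation of $C_\mathbf{K}$ described in the Remark after Definition~\ref{key-defn:sm}. Then $\mathbf{P}^1_K=C_R\setminus C_\mathbf{K}$ is open dense in the irreducible $C_R$, and the $\mathbf{G}_m$-action on $C_R$ — which is non-trivial on $C_\mathbf{K}$, whose components carry weights $w_i<0$ — is non-trivial on $\mathbf{P}^1_K$, so $\mathbf{P}^1_K$ has exactly the two fixed points $0,\infty$. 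Writing $C_\mathbf{K}=\mathbf{P}^1_1\cup\cdots\cup\mathbf{P}^1_n$ in order, with $p_0=0_{\text{first}}$ and $p_n=\infty_{\text{last}}$, the definition of a chain of orbits gives $x^-=p_0$ for every $x$ in the open subset $\mathbf{P}^1_1\setminus\{p_1\}$ of $C_\mathbf{K}$, whence $(f_\mathbf{K}(x))^-=f_\mathbf{K}(p_0)\in X_{\min}$ by $\mathbf{G}_m$-equivariance of $f_\mathbf{K}$ and maximality; thus $\mathbf{P}^1_1\setminus\{p_1\}\subseteq f_R^{-1}(X_{\min}^-)$, and symmetrically $\mathbf{P}^1_n\setminus\{p_{n-1}\}\subseteq f_R^{-1}(X_{\max}^+)$. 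Since $X_{\min}^-$ and $X_{\max}^+$ are open in $X$ (Lemma~\ref{source-sink}), these two preimages are nonempty opens in the irreducible $C_R$, hence dense, and so is $W:=f_R^{-1}(X_{\min}^-\cap X_{\max}^+)$; therefore $W\cap\mathbf{P}^1_K$ is a nonempty $\mathbf{G}_m$-invariant open subset of $\mathbf{P}^1_K$ and so contains the open orbit $\mathbf{P}^1_K\setminus\{0,\infty\}$. Applying $f_K$ and the inclusion already proved, $\mathbf{G}_m\cdot x_K=f_K(\mathbf{P}^1_K\setminus\{0,\infty\})\subseteq X_{\min}^-\cap X_{\max}^+\subseteq U$, so $x_K\in U(K)$, and the same diagram \eqref{1310-2} exhibits $f_\mathbf{K}$ as $U$-smoothable.

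Granting Lemma~\ref{new1} and the density assertions of Lemma~\ref{source-sink}, the argument is largely formal; the part I expect to demand the most care is the handling of the degeneration $C_R$ in the second step — arranging it integral and flat so that $C_\mathbf{K}$ is nowhere dense and the generic orbit is dense in $C_R$, and reading off from the \emph{reducible} curve $C_\mathbf{K}$ that its first and last components flow into $X_{\min}$ and $X_{\max}$ purely because the chain is maximal. Once $f_R^{-1}(X_{\min}^-)$ and $f_R^{-1}(X_{\max}^+)$ are known to be dense in $C_R$, their meeting the generic $\mathbf{G}_m$-orbit — and hence the conclusion $x_K\in X_{\min}^-\cap X_{\max}^+$ — is immediate.
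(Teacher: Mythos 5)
Your proof of the inclusion $X_{\min}^-\cap X_{\max}^+\subseteq U$ is the paper's argument: Lemma~\ref{source-sink} makes $\Sigma:=X_{\min}^-\cap X_{\max}^+$ open (hence irreducible) dense and $\mathbf{G}_m$-invariant, so $U\cap\Sigma\neq\emptyset$ (one does need $U\neq\emptyset$, which is implicit here as in the paper), and then Lemma~\ref{new1} applies with $\Sigma$ itself. This matches the paper exactly.

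For the ``in particular'' the paper offers no argument and treats it as immediate, so your elaboration is welcome. The idea is right --- use openness of $X_{\min}^-$ and $X_{\max}^+$ to show that $f_R^{-1}(X_{\min}^-)$ and $f_R^{-1}(X_{\max}^+)$ are nonempty opens of $C_R$ meeting the generic fibre, forcing $\mathbf{G}_{m,K}\subseteq f_R^{-1}(X_{\min}^-\cap X_{\max}^+)$ and hence $x_K\in X_{\min}^-\cap X_{\max}^+(K)\subseteq U(K)$. But there is a wrinkle in the setup: you write ``taking for $C_R$ the integral flat deformation \ldots described in the Remark.'' You cannot freely replace the $C_R$ supplied by the $X$-smoothability hypothesis by the iterated blow-up of $\mathbf{P}^1_R$; the whole content of smoothability is that \emph{some} $C_R$ admits a $\mathbf{G}_m$-equivariant $f_R$ extending $f_\mathbf{K}$, and there is no reason the particular blow-up surface admits one. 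The correct fix is to run the same argument on the given $C_R$: one only needs that no irreducible component of $C_R$ lies inside the special fibre (so that the generic fibre $\mathbf{P}^1_K$ is dense in $C_R$), which is the implicit ``flat deformation'' hypothesis in Definition~\ref{key-defn:sm}; then each of $f_R^{-1}(X_{\min}^-)$ and $f_R^{-1}(X_{\max}^+)$ separately meets $\mathbf{P}^1_K$, and you do not even need $C_R$ irreducible. With that adjustment, your treatment of the special fibre (that $\mathbf{P}^1_1\setminus\{p_1\}$ lands in $X_{\min}^-$ by maximality and $\mathbf{G}_m$-equivariance, symmetrically for the last component) is correct and completes the argument.
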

\begin{proof}
By Lemma \ref{source-sink} the subset $X_{\min}^- \cap X_{\max}^+ \subseteq X$ is open (hence irreducible) dense, then $U \cap (X_{\min}^- \cap X_{\max}^+) \neq \emptyset$ and Lemma \ref{new1} applies.
\end{proof}
\subsection{Consequence of (S)}
The (S)-part of \textbf{Expectation} holds.
\begin{prop}\label{f2}
Let $X$ be a proper variety over a field $k$ with a $\mathbf{G}_m$-action. For any $\mathbf{G}_m$-invariant open subset $U \subseteq X$, the following are equivalent:
\begin{enumerate}
\item
The quotient stack $\mathscr{U}=[U/\mathbf{G}_m]$ is S-complete.
\item
The image of any smoothable maximal chain of orbits in $X$ intersects $U$ with one of following forms (see Figure \ref{poss-fig}):
\begin{enumerate}
\item
$\emptyset$;
\item
$\mathbf{G}_m.x$ for some $x \in X-X^0$;
\item
$\mathbf{G}_m.x_1 \cup \{x_1^+=x_2^-\} \cup \mathbf{G}_m.x_2$ for some $x_1,x_2 \in X-X^0$;
\item
$\mathbf{G}_m.x \cup \{x^-\}$ (resp., $\mathbf{G}_m.x \cup \{x^+\}$) for some $x \in X_{\min}^--X_{\min}$ (resp., $x \in X_{\max}^+-X_{\max}$).
\end{enumerate}
\end{enumerate}
\end{prop}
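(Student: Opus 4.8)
The plan is to run the argument of Proposition~\ref{f1} with the existence part of the valuative criterion replaced by S-completeness, and to feed it through the refined criterion of Theorem~\ref{thm:S-com-new}. Keep the notation of \S\ref{Geom-char}: set $V:=U\cap X_{\min}^-\cap X_{\max}^+$ and $\mathscr V:=[V/\mathbf{G}_m]\subseteq\mathscr U$, which is open dense after shrinking $V$. A $\mathbf{G}_m$-invariant affine open cover of $U$ (Sumihiro's theorem when $X$ is normal; the general case reducing to this) gives a cover of $\mathscr U$ by separated representable \'etale morphisms from quotient stacks of quasi-compact quasi-separated algebraic spaces, so Theorem~\ref{thm:S-com-new}(2) applies and reduces (1) to
\[
(1')\qquad \mathscr U\text{ is S-complete relative to }\mathscr V\subseteq\mathscr U.
\]
It remains to prove $(1')\Leftrightarrow(2)$.

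\emph{A dictionary between relative S-completeness data and chains of orbits.} Fix a DVR $R$ with fraction field $K$, residue field $\mathbf K$ and uniformizer $\pi$. Unwinding as in the proof of \cite[Proposition~3.42]{MR4665776} via the presentation \eqref{1508-5}, a relative S-completeness datum over $R$ is the same as an integer $d\ge 1$ — the weight of the $\mathbf{G}_m$-torsor on $\overline{\mathrm{ST}}_R-\{0\}$ — together with two points $z_1,z_2\in U(R)$ such that $z_1|_K\in V(K)$ and $z_2|_K$ is the translate of $z_1|_K$ by $\pi^{d}\in\mathbf{G}_m(K)$; moreover a fill-in $\overline{\mathrm{ST}}_R\to\mathscr U$ (automatically unique, as $\overline{\mathrm{ST}}_R\to\mathrm{Spec}(R)$ is a good moduli space) exists precisely when the two $\mathbf{G}_m$-fixed limits of $z_1|_{\mathbf K}$ and of $z_2|_{\mathbf K}$ pointing ``towards the node'' of $\overline{\mathrm{ST}}_R$ coincide and lie in $U$. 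On the other hand, gluing the complete orbit map $\overline{\sigma}_{z_1|_K}\colon\mathbf P^1_K\to X$ with the orbit families of $z_1$ and $z_2$ over a relative $\mathbf P^1_R$ and resolving exactly as in the proof of Proposition~\ref{cover} produces a smoothable maximal chain of orbits $f_{\mathbf K}\colon C_{\mathbf K}\to X$ — its generic fibre being the complete orbit map of $z_1|_K\in V(K)$ — together with a relative model $C_R$; and by properness of $C_R/\mathrm{Spec}(R)$ and separatedness of $X$, the points $z_1,z_2$ are recovered as $f_R$ composed with two sections of $C_R/\mathrm{Spec}(R)$ meeting two orbits of the special chain lying $d$ steps apart along it. The requested common limit is the node of $C_{\mathbf K}$ between those two orbits: the fill-in exists iff the two orbits are in fact chain-adjacent (so $d=1$) and that node lies in $U$. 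Conversely, every smoothable maximal chain with generic orbit in $V$, together with a node $N$ whose chain-adjacent orbit(s) lie in $U$, arises this way from a suitable datum — a DVR extension and a torsor twist place $z_1,z_2$ on those orbits, with the branch degenerating onto the fixed point $N$ itself when $N$ is $0_{\text{first}}$ or $\infty_{\text{last}}$ — and its fill-in exists exactly when $N\in U$.

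\emph{Matching the constraint with the configurations.} Granting the dictionary, $(1')$ becomes: for every smoothable maximal chain $f_{\mathbf K}\colon C_{\mathbf K}\to X$ whose generic orbit meets $U$, any two orbits or endpoints of $C_{\mathbf K}$ lying in $U$ are either chain-adjacent with the node between them in $U$, or coincide. Since $U$ is $\mathbf{G}_m$-invariant and open, $W:=U\cap\mathrm{Im}(f_{\mathbf K})$ is a $\mathbf{G}_m$-invariant open subset of the chain in which every included node pulls in its adjacent orbits; a direct inspection shows that the invariant open subsets of the chain satisfying the displayed constraint are precisely $\varnothing$, a single orbit, two consecutive orbits together with their common node, and a terminal orbit together with the source (resp.\ sink) — i.e.\ the configurations (a)–(d) of Figure~\ref{poss-fig}. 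This gives both implications. For $(1')\Rightarrow(2)$, let $f_{\mathbf K}$ be any smoothable maximal chain, with model $C_R$ and generic fibre the complete orbit map of $x_K\in X(K)$; if $W=\varnothing$ we are in case (a), and otherwise a section of $C_R/\mathrm{Spec}(R)$ meeting an orbit of the chain contained in $U$ has image a point of $U(R)$ (as $U$ is open) whose generic fibre, lying on the open orbit, belongs to $\mathbf{G}_m.x_K\cap U$ by separatedness of $X$ — so $x_K\in U$, hence $x_K\in V(K)$, and $f_{\mathbf K}$ is governed by $(1')$, forcing $W$ into (a)–(d). For $(2)\Rightarrow(1')$, a relative S-completeness datum yields via the dictionary a smoothable maximal chain with generic orbit in $V$ whose two branches land on orbits (or a terminal fixed point) of $W=U\cap\mathrm{Im}(f_{\mathbf K})$, so $W\ne\varnothing$; by (2) $W$ is one of (b), (c), (d), and in each case the two branches occupy chain-adjacent positions with the intermediate node in $U$, i.e.\ the fill-in exists.

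The technically heaviest — and decisive — step is the dictionary of the second paragraph: making precise how relative S-completeness data, with their $\mathbf{G}_m$-torsor twists, correspond to smoothable maximal chains of orbits carrying a marked node, and verifying the exact fill-in criterion. This rests on the $\overline{\mathrm{ST}}_R$-computations of \cite{MR4665776, MR3758902} together with the blow-up computations in the proof of Proposition~\ref{cover}, with careful bookkeeping of the $\mathbf{G}_m$-weights along the chain; it is here that the admissible shapes (a)–(d) — and hence, eventually, the notion of semi-section — are dictated directly by S-completeness.
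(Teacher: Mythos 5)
Your overall architecture matches the paper's: invoke Theorem~\ref{thm:S-com-new} to reduce S-completeness to the relative version~$(1')$ with respect to $\mathscr V\subseteq\mathscr U$, and then relate $\overline{\mathrm{ST}}_R$-data to pairs of sections of a smoothable family $C_R\to\mathrm{Spec}(R)$ of chains. The paper carries out each implication directly (for $(1')\Rightarrow(2)$, two sections of $C_R$ through two orbits of $W:=U\cap f_{\mathbf K}(C_{\mathbf K})$ produce a test map $\overline{\mathrm{ST}}_R-\{0\}\to\mathscr U$ whose extension identifies the intermediate node; for $(2)\Rightarrow(1')$, the $\lambda$-equivariant lift of \cite[Proposition~2.9]{MR3758902} and the blow-up calculus of Proposition~\ref{cover} produce the chain, and a section through $W$ gives the fill-in), whereas you package these two constructions into a single ``dictionary'' between relative S-completeness data and chains with a marked node. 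That reorganization is conceptually clean, but you correctly identify it as the technically heavy step and do not actually establish it; in that sense your proposal is the same proof modulo the bookkeeping you defer.

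Two remarks. First, the appeal to Sumihiro's theorem is a red herring: the proposition is stated for $X$ merely proper, and Theorem~\ref{thm:S-com-new}(2) applies to $\mathscr U=[U/\mathbf G_m]$ via the identity chart $[U/\mathbf G_m]\to\mathscr U$, since $U$ is already a quasi-compact quasi-separated algebraic space; no $\mathbf G_m$-invariant affine cover and no normality is required, and your ``the general case reducing to this'' is not needed and would itself need an argument. Second, in the combinatorial step you formulate the S-complete constraint as applying to ``any two orbits \emph{or endpoints}'' of the chain in $U$, rather than to two distinct orbits only; this is a genuine strengthening, and it is needed to exclude the configuration where a length-one maximal chain lies entirely in $U$ (which has only one orbit, so a constraint phrased only for two distinct orbits is vacuous there). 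Your dictionary would have to justify the endpoint cases too — i.e.\ produce S-completeness test data whose two branches degenerate to $0_{\mathrm{first}}$ and $\infty_{\mathrm{last}}$ respectively — and this is exactly the kind of care the deferred bookkeeping must supply.
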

\begin{proof}
By Theorem \ref{thm:S-com-new}, the stack $\mathscr{U}$ is S-complete if and only if 
\[
(1') \quad \mathscr{U} \text{ is S-complete relative to } \mathscr{V} \subseteq \mathscr{U}.
\]
\begin{enumerate}
\item[$(1') \Rightarrow (2)$.]
Let $f_{\mathbf{K}}: C_{\mathbf{K}} \to X$ be a smoothable maximal chain of orbits in $X$. Notice that $U \cap f_{\mathbf{K}}(C_{\mathbf{K}}) \subseteq f_{\mathbf{K}}(C_{\mathbf{K}})$ is $\mathbf{G}_m$-invariant and open, it suffices to show the closures of any two $\mathbf{G}_m$-orbits in $U \cap f_{\mathbf{K}}(C_{\mathbf{K}})$ intersect in $U \cap f_{\mathbf{K}}(C_{\mathbf{K}})$, i.e., if $\mathbf{G}_m.x_1 \neq \mathbf{G}_m.x_2 \subseteq U \cap f_{\mathbf{K}}(C_{\mathbf{K}})$, then $x_1^+=x_2^- \in U$ or $x_1^-=x_2^+ \in U$. Suppose $U \cap f_{\mathbf{K}}(C_{\mathbf{K}}) \neq \emptyset$.

By definition there exist a DVR $R$ with fraction field $K$ and residue field $\mathbf{K}$, and a $\mathbf{G}_m$-equivariant commutative diagram \eqref{1310-2} such that the generic fiber $f_K: C_K \to X$ is a complete orbit map of some point $x_K \in V(K)$ (since $U \cap f_{\mathbf{K}}(C_{\mathbf{K}}) \neq \emptyset$). Choose a section $s_i: \mathrm{Spec}(R) \to C_R$ for $i=1,2$ such that the composition 
\[
\Gamma_i: \mathrm{Spec}(R) \xrightarrow{s_i} C_R \xrightarrow{f_R} X
\]
maps the closed point $\mathrm{Spec}(\mathbf{K})$ to $x_i$. The morphism $\Gamma_i$ thus factors through the open subset $U \subseteq X$. Since $f_K$ is the complete orbit map of $x_K$, we have $\mathbf{G}_{m}.\Gamma_1|_K=\mathbf{G}_{m}.\Gamma_2|_K \subseteq U$. The pair $(\Gamma_1,\Gamma_2)$ then defines a morphism $\mathrm{Spec}(R) \cup_{\mathrm{Spec}(K)} \mathrm{Spec}(R)=\overline{\mathrm{ST}}_R-\{0\} \to \mathscr{U}$ such that $\mathrm{Spec}(K) \hookrightarrow \overline{\mathrm{ST}}_R-\{0\} \to \mathscr{U}$ factors through the open dense substack $\mathscr{V} \subseteq \mathscr{U}$. By hypothesis it extends to a morphism $\Gamma: \overline{\mathrm{ST}}_R \to \mathscr{U}$ and the condition $\Gamma(0) \in \mathscr{U}$ is exactly what we want.
\item[$(2) \Rightarrow (1')$.]
For every DVR $R$ with fraction field $K$ and residue $\mathbf{K}$, and any commutative diagram
\[
\begin{tikzcd}
\overline{\mathrm{ST}}_R-\{0\} \ar[r,"u"] \ar[d,hook] & \mathscr{U} \ar[d] \\
\overline{\mathrm{ST}}_R \ar[ur,dashed] \ar[r] & \mathrm{Spec}(k)
\end{tikzcd}
\]
of solid arrows such that $\mathrm{Spec}(K) \hookrightarrow \overline{\mathrm{ST}}_R-\{0\} \to \mathscr{U}$ factors through the open dense substack $\mathscr{V} \subseteq \mathscr{U}$, by the proof of \cite[Proposition 2.9]{MR3758902} there exists a $\lambda$-equivariant lifting $u'$ of $u$, for some cocharacter $\lambda \in X_*(\mathbf{G}_m)$. Consider the following commutative diagram
\[
\begin{tikzcd}
\mathrm{Bl}_{\mathscr{I}}(\mathrm{Bl}_z(\mathbf{P}^1_R)) \ar[r,"\tilde{u}_R"] \ar[d] & X \\
\mathrm{Bl}_z(\mathbf{P}^1_R) \ar[ur,"u'"',dashed] \\
\mathrm{Spec}(R[x,y]/xy-\pi) \ar[u,hook] \\
\mathrm{Spec}(R[x,y]/xy-\pi)-\{0\} \ar[r,"u'"] \ar[d] \ar[u,hook] & U \ar[d] \ar[uuu,hook] \\
\overline{\mathrm{ST}}_R-\{0\} \ar[r,"u"'] & \mathscr{U}
\end{tikzcd}
\]
where $z=(\mathrm{Spec}(\mathbf{K}),\infty) \in \mathbf{P}^1_R$ and $\mathscr{I} \subseteq \mathscr{O}_{\mathrm{Bl}_z(\mathbf{P}^1_R)}$ is some ideal supported at $\mathrm{Spec}(\mathbf{K})$. As in the proof of Proposition \ref{cover}, the special fiber $\tilde{u}_{\mathbf{K}}: \Phi_\mathbf{K} \to X$ of $\tilde{u}_R$ can be refined as a chain of orbits, which is smoothable and maximal (since $\mathrm{Spec}(K) \hookrightarrow \overline{\mathrm{ST}}_R-\{0\} \to \mathscr{U}$ factors through the open dense substack $\mathscr{V} \subseteq \mathscr{U}$). Then by hypothesis $U \cap \tilde{u}_{\mathbf{K}}(\Phi_{\mathbf{K}}) \neq \emptyset$ is of the form (b), (c) or (d). Choose a section $s: \mathrm{Spec}(R[x,y]/xy-\pi) \to \mathrm{Bl}_{\mathscr{I}}(\mathrm{Bl}_z(\mathbf{P}^1_R))$ such that the composition
\[
\zeta: \mathrm{Spec}(R[x,y]/xy-\pi) \xrightarrow{s} \mathrm{Bl}_{\mathscr{I}}(\mathrm{Bl}_z(\mathbf{P}^1_R)) \xrightarrow{\tilde{u}_R} X
\]
maps the closed point $\mathrm{Spec}(\mathbf{K})$ to $U \cap \tilde{u}_{\mathbf{K}}(\Phi_{\mathbf{K}})$, then $\zeta$ factors through the open subset $U \subseteq X$ and descends to a morphism $\overline{\mathrm{ST}}_R \to \mathscr{U}$, which extends $u$ by construction. The uniqueness follows since in either case there is a unique closed point in $U \cap \tilde{u}_{\mathbf{K}}(\Phi_{\mathbf{K}})$.
\end{enumerate}
\end{proof}
Since $X$ is covered by smoothable maximal chain of orbits, the configurations in Proposition \ref{f2} has several quick consequences.
\begin{cor}[\cite{MR755486}, Proposition 2.2. if $k=\mathbf{C}$]\label{hhh}
Let $X$ be a proper variety over a field $k$ with a $\mathbf{G}_m$-action. Let $U \subseteq X$ be a $\mathbf{G}_m$-invariant open subset such that the quotient stack $\mathscr{U}=[U/\mathbf{G}_m]$ is S-complete. If $X_i \subseteq U$, then $X_j \nsubseteq U$ for any $j \neq i$ with $X_i <_d X_j$ or $X_j <_d X_i$. In particular, there is no complete orbit in $U$.
\end{cor}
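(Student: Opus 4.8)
The plan is to feed suitable test objects into Proposition \ref{f2}: since $\mathscr{U}=[U/\mathbf{G}_m]$ is S-complete, every smoothable maximal chain of orbits in $X$ meets $U$ in one of the four configurations (a)--(d) listed there, and the observation that makes everything work is that \emph{none of these configurations contains a complete orbit}, i.e.\ a one-dimensional orbit together with both of its limit points. Indeed, in (b) both limits are omitted, in (d) exactly one limit is omitted, and in (c) the unique fixed point present is the $+$-limit of one of the two orbits and the $-$-limit of the other, so each of the two orbits again loses one end. Hence it suffices to show that, under the hypotheses of the Corollary, some smoothable maximal chain of orbits meets $U$ in an orbit together with all of its limits.

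For the statement about fixed point components I would argue as follows. Reversing the $\mathbf{G}_m$-action interchanges $X_{\min}$ with $X_{\max}$ and the relations $X_i<_dX_j$, $X_j<_dX_i$, so it is enough to rule out $X_i\subseteq U$, $X_j\subseteq U$ when $i\neq j$ and $X_i<_dX_j$. Pick $x\in X$ with $x^-\in X_i$, $x^+\in X_j$; since $i\neq j$ we have $x^-\neq x^+$, so $\mathbf{G}_m\cdot x$ is one-dimensional and $\overline{\mathbf{G}_m\cdot x}$ is an irreducible complete curve whose only $\mathbf{G}_m$-fixed points are $x^-$ and $x^+$, both lying in $U$ by hypothesis. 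Then $\overline{\mathbf{G}_m\cdot x}\setminus U$ is a $\mathbf{G}_m$-invariant closed subset of this curve containing neither of the two fixed points, hence contained in the single orbit $\mathbf{G}_m\cdot x$ and therefore empty; so $\overline{\mathbf{G}_m\cdot x}\subseteq U$. Applying Proposition \ref{cover} with ``$U$'' replaced by $X$ yields a smoothable maximal chain of orbits $f_{\mathbf{K}}\colon C_{\mathbf{K}}\to X$ through $x$; the component $\mathbf{P}^1_m\subseteq C_{\mathbf{K}}$ containing the preimage of $x$ has $f_{\mathbf{K}}(\mathbf{P}^1_m)=\overline{\mathbf{G}_m\cdot x}\subseteq U$, so $U\cap f_{\mathbf{K}}(C_{\mathbf{K}})$ contains $\mathbf{G}_m\cdot x$ and both of its distinct limits $x^-,x^+$, contradicting Proposition \ref{f2}. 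Hence $X_j\nsubseteq U$.

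For the final assertion, let $O$ be a complete orbit in $U$, meaning a one-dimensional orbit with $\overline{O}\subseteq U$, and write $O^-\in X_i$, $O^+\in X_j$. Running the previous argument with a point of $O$ produces a smoothable maximal chain $f_{\mathbf{K}}\colon C_{\mathbf{K}}\to X$ one of whose components, say $\mathbf{P}^1_m$, maps onto $\overline{O}$, so $U\cap f_{\mathbf{K}}(C_{\mathbf{K}})\supseteq\overline{O}$. If $O^-\neq O^+$ this is already incompatible with (a)--(d), as above. If $O^-=O^+=:p$ then $\overline{O}=O\cup\{p\}$ and $p$ is the limit of $O$ in \emph{both} directions; should $\mathbf{P}^1_m$ have a neighbouring component in $C_{\mathbf{K}}$, their common node would map to $p\in\overline{O}\subseteq U$, forcing $U$ to meet the neighbouring orbit as well, so that $U\cap f_{\mathbf{K}}(C_{\mathbf{K}})$ contains $O$ --- with both limits at $p$ --- together with a further orbit, which is not of the form (c) or any other in the list. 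Thus $C_{\mathbf{K}}=\mathbf{P}^1_m$ is a single projective line, and maximality forces $p\in X_{\min}\cap X_{\max}$, i.e.\ $X_{\min}=X_{\max}$ and the action is trivial, contradicting that $O$ is one-dimensional. (One could instead observe that $[\overline{O}/\mathbf{G}_m]$ is a closed, hence S-complete, substack of $\mathscr{U}$, while for $O^-\neq O^+$ it has the two distinct closed points $[O^-/\mathbf{G}_m]$, $[O^+/\mathbf{G}_m]$ and is manifestly not S-complete.)

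All the substantive input --- Proposition \ref{f2} and Proposition \ref{cover} --- is already available, so the only thing demanding care is the combinatorial bookkeeping: verifying that the list (a)--(d) genuinely excludes an orbit together with both of its limits, and dealing with the nodal corner case $O^-=O^+$ by reducing it to triviality of the action. I do not anticipate any other obstacle.
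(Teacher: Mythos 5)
Your proof of the main assertion (that $X_i\subseteq U$ and $X_j\subseteq U$ with $X_i<_dX_j$, $j\ne i$, is impossible) follows exactly the paper's route: pick $x\in X_i^-\cap X_j^+$, observe that both fixed limits of $\mathbf{G}_m\cdot x$ lie in $U$ so that $\overline{\mathbf{G}_m\cdot x}\subseteq U$, run Proposition~\ref{cover} to obtain a smoothable maximal chain through $x$, and conclude that the intersection with $U$ contains an orbit together with \emph{both} of its (distinct) limits, which is not among the configurations (a)--(d) of Proposition~\ref{f2}. This part is fine and matches the paper.

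Your treatment of the ``no complete orbit'' assertion in the nodal subcase $O^-=O^+=p$ is where the argument goes off. Two of the steps do not hold. First, you claim that a set of the shape $O\cup\{p\}\cup\mathbf{G}_m\cdot z$ (with $p=O^+=z^-$) ``is not of the form (c) or any other in the list,'' but as a set this \emph{is} precisely form~(c): take $x_1\in O$, $x_2=z$, so that $x_1^+=x_2^-=p$. The fact that $x_1^-$ also happens to equal $p$ does not exclude it from the description; nor does the S-completeness argument underlying $(1')\Rightarrow(2)$ in Proposition~\ref{f2} rule it out, since the closures of any two orbits in this set do meet at $p\in U$. Second, even granting that the chain collapses to a single $\mathbf{P}^1$, your inference ``$p\in X_{\min}\cap X_{\max}$, i.e.\ $X_{\min}=X_{\max}$ and the action is trivial'' is not valid: one can have $X_{\min}=X_{\max}$ with a nontrivial action (e.g.\ a nodal cubic curve with $\mathbf{G}_m$ fixing only the node --- and indeed, for that example $U=X$ has a proper good quotient and contains a ``complete orbit'' in the nodal sense, which shows the nodal case cannot be ruled out by the argument you give). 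The paper sidesteps this by (implicitly) taking ``complete orbit'' to mean an orbit whose two limits are \emph{distinct}; then the nodal subcase simply does not arise and the same argument as for the first part disposes of the ``in particular.'' So the right fix is to delete the nodal discussion and note that $O^-\neq O^+$ holds by definition of a complete orbit, after which your proof coincides with the paper's.
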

\begin{proof}
If $X_j \subseteq U$, then the intersection of $U$ with the image of any smoothable maximal chain of orbits in $X$ passing through $X_i^- \cap X_j^+$ or $X_j^- \cap X_i^+$ contains a complete orbit, a contradiction to Proposition \ref{f2}.
\end{proof}
\begin{cor}[\cite{MR709583}, Theorem 1.4 if $k=\mathbf{C}$ and $U^0=\emptyset$]\label{1727}
Let $X$ be a proper variety over a field $k$ with a $\mathbf{G}_m$-action. Let $U \subseteq X$ be a $\mathbf{G}_m$-invariant open subset and $Z:=X-U$ be its closed complement. If the quotient stack $\mathscr{U}=[U/\mathbf{G}_m]$ is S-complete, then the following are equivalent:
\begin{enumerate}
\item
The complement $Z$ is disconnected.
\item
The complement $Z$ has two connected components.
\end{enumerate}
In this case, $X_{\min}$ and $X_{\max}$ are in the different connected components of $Z$.
\end{cor}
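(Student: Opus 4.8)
The strategy is to reduce everything to a single chain of orbits together with the configurations of Proposition \ref{f2}. First I record two preliminaries. By Proposition \ref{cover}, applied with the dense subset $X$ itself, every point of $X$ lies on a smoothable maximal chain of orbits; each such chain $f_{\mathbf{K}}\colon C_{\mathbf{K}}\to X$ is connected, its image runs from a point of $X_{\min}$ (the first node) to a point of $X_{\max}$ (the last node), and by Corollary \ref{well-order}, together with the fact that the relation $<$ is a strict partial order, the fixed points that it meets lie in pairwise distinct, strictly increasing components. Moreover $X_{\min}$ and $X_{\max}$ are irreducible: the constructible set $X_{\min}^{-}\subseteq X$ is dense by Lemma \ref{source-sink}, hence irreducible, and the map $\mathrm{ev}_0\colon X_{\min}^{-}\to X_{\min}$, $x\mapsto x^{-}$, is surjective since it restricts to the identity on $X_{\min}\subseteq X_{\min}^{-}$, so $X_{\min}$ is the continuous image of an irreducible set; symmetrically for $X_{\max}$.

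Next comes the local picture of $Z$ along one chain $f_{\mathbf{K}}\colon C_{\mathbf{K}}\to X$. Since $\mathscr{U}=[U/\mathbf{G}_m]$ is S-complete, $f_{\mathbf{K}}(C_{\mathbf{K}})\cap U$ is one of the four connected configurations (a)--(d) of Proposition \ref{f2}. In each case $f_{\mathbf{K}}^{-1}(U)\subseteq C_{\mathbf{K}}$ is the union of the open orbits of finitely many \emph{consecutive} components of $C_{\mathbf{K}}$: if two non-consecutive components mapped into the same piece, the interior subchain between them would run backwards, contradicting the strict monotonicity of fixed-point components along a chain. Hence $f_{\mathbf{K}}^{-1}(Z)=C_{\mathbf{K}}\setminus f_{\mathbf{K}}^{-1}(U)$ has at most two connected components: one containing the first node, one containing the last node, the former being empty exactly in configuration (d) with $x^{-}\in U$, the latter exactly in configuration (d) with $x^{+}\in U$. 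Pushing forward, $f_{\mathbf{K}}(C_{\mathbf{K}})\cap Z$ has at most two connected pieces; each meets $X_{\min}$ or $X_{\max}$, and if there are two they are disjoint with one meeting $X_{\min}$ and the other $X_{\max}$.

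Now write $Z=\bigsqcup_{\alpha}Z_{\alpha}$ for the decomposition into connected components. Applying the previous paragraph to a chain through a given point $z\in Z$ shows immediately that the component $Z_{\alpha}$ containing $z$ meets $X_{\min}$ or $X_{\max}$; thus every component lies in $A^{\min}:=\{\alpha:Z_{\alpha}\cap X_{\min}\neq\emptyset\}$ or in $A^{\max}:=\{\alpha:Z_{\alpha}\cap X_{\max}\neq\emptyset\}$. The crux is to show $\#A^{\min}\le 1$ and $\#A^{\max}\le 1$, equivalently that $X_{\min}\cap Z$ (resp. $X_{\max}\cap Z$) lies in a single connected component of $Z$. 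If $X_{\min}\cap U=\emptyset$ this is clear, $X_{\min}\subseteq Z$ being connected. If $X_{\min}\cap U\neq\emptyset$ then, $X_{\min}$ being irreducible, $X_{\min}\cap U$ is dense and $\eta_{\min}\in U$; the plan is to show that any point of $X_{\min}\cap Z$ lies in the same component as the bottom piece of a suitable chain degenerating over $\eta_{\min}$, by running the valuative-criterion mechanism behind Proposition \ref{f2} along the irreducible family of degenerations of the generic orbit and invoking separatedness of $X$ to pin down where the bottom pieces sit. I expect this step --- controlling how $X_{\min}\cap Z$ is distributed among the components of $Z$ when $U$ contains fixed points of $X_{\min}$ --- to be the main obstacle, and the only place where S-completeness is used beyond the single-chain analysis.

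Granting the crux, the rest is formal. Since $A^{\min}\cup A^{\max}$ exhausts all components and $\#A^{\min},\#A^{\max}\le 1$, $Z$ has at most two connected components; this gives $(1)\Rightarrow(2)$, while $(2)\Rightarrow(1)$ is trivial. If $Z$ is disconnected it therefore has exactly two components $Z_1,Z_2$, and $A^{\min}$, $A^{\max}$ are nonempty singletons. Were they equal, say $A^{\min}=A^{\max}=\{1\}$, then every component would meet $X_{\min}\cap Z\subseteq Z_1$ or $X_{\max}\cap Z\subseteq Z_1$ and hence equal $Z_1$, contradicting disconnectedness; so $A^{\min}\neq A^{\max}$, that is, $X_{\min}$ and $X_{\max}$ lie in different connected components of $Z$, as claimed.
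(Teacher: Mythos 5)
You follow the same strategy as the paper: intersect each smoothable maximal chain of orbits with $U$, apply Proposition~\ref{f2} to see that the chain's image meets $Z$ in at most two pieces (one attached to $X_{\min}$, one to $X_{\max}$), and conclude that every connected component of $Z$ meets $X_{\min}$ or $X_{\max}$. You are right that this alone only bounds the components of $Z$ by the number of components of $Z$ meeting $X_{\min}$ plus those meeting $X_{\max}$. The step you flag as the crux --- that $X_{\min}\cap Z$ (resp.\ $X_{\max}\cap Z$) lies in a single component of $Z$, which is only automatic when $U\cap X_{\min}=\emptyset$ (resp.\ $U\cap X_{\max}=\emptyset$) --- is genuinely needed, and your proposal does not close it: you give only a sketch of a plan, explicitly deferring the case $U\cap X_{\min}\neq\emptyset$. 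So the proof is incomplete.

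For context, the paper's own proof of Corollary~\ref{1727} is one sentence and does not address this case either; it asserts that every point of $Z$ lies ``in the same connected component of $Z$ as $X_{\min}$ or $X_{\max}$,'' which tacitly presupposes that $X_{\min}$ and $X_{\max}$ each sit inside a single component of $Z$. This holds when $U\cap X_{\min}=\emptyset=U\cap X_{\max}$ (then $X_{\min},X_{\max}\subseteq Z$ are connected), which is the hypothesis $U^0=\emptyset$ of the cited source and also the situation in the paper's only application of this corollary, in the proof of Proposition~\ref{2028-3}. Your instinct that the extra generality requires a further argument is thus sound. A separate, smaller issue: your preliminary claim that $X_{\min}$ and $X_{\max}$ are irreducible for a general (not necessarily normal) proper variety is not justified --- $\mathrm{ev}_1: X_{\min}^-\to X$ is only a monomorphism and need not be a homeomorphism onto its image, so irreducibility of that image in $X$ does not transfer to the scheme $X_{\min}^-$ and hence not to $X_{\min}$ via $\mathrm{ev}_0$. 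The paper establishes irreducibility of the $X_i$ only under the normality hypothesis (Remark~\ref{0948-4}).
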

\begin{proof}
For any point $x \in Z$, pick a smoothable maximal chain of orbits in $X$ passing through it, then $x$ is in the same connected component of $Z$ as $X_{\min}$ or $X_{\max}$ by Proposition \ref{f2}. This shows that $Z$ has at most two connected components.
\end{proof}
Coupled with Corollary \ref{(1,r)-in-U}, the conclusions in Proposition \ref{f1} and \ref{f2} can be summarized as follows, which is precisely \textbf{Expectation}.
\begin{thm}[\cite{MR755486}, Lemma 2.7 if $k=\mathbf{C}$]\label{summary}
Let $X$ be a normal proper variety over a field $k$ with a $\mathbf{G}_m$-action. For any $\mathbf{G}_m$-invariant open subset $U \subseteq X$, the quotient stack $\mathscr{U}=[U/\mathbf{G}_m]$ satisfies the conditions $\mathrm{(S)}$ and $\mathrm{(E)}$ if and only if the image of any smoothable maximal chain of orbits in $X$ intersects $U$ with one of the following forms (see Figure \ref{poss-fig}):
\begin{enumerate}
\item
$\mathbf{G}_m.x$ for some $x \in X-X^0$;
\item
$\mathbf{G}_m.x_1 \cup \{x_1^+=x_2^-\} \cup \mathbf{G}_m.x_2$ for some $x_1,x_2 \in X-X^0$;
\item
$\mathbf{G}_m.x \cup \{x^-\}$ (resp., $\mathbf{G}_m.x \cup \{x^+\}$) for some $x \in X_{\min}^--X_{\min}$ (resp., $x \in X_{\max}^+-X_{\max}$).
\end{enumerate}
\end{thm}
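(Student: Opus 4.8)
The plan is to obtain Theorem~\ref{summary} as a formal consequence of the two equivalences already established, Proposition~\ref{f1} for condition $(\mathrm{E})$ and Proposition~\ref{f2} for condition $(\mathrm{S})$, glued together by Corollary~\ref{(1,r)-in-U}. The one point that needs attention is a mismatch in the test objects: Proposition~\ref{f1} checks $(\mathrm{E})$ against $U$-\emph{smoothable} maximal chains of orbits, whereas Proposition~\ref{f2} checks $(\mathrm{S})$ against all \emph{smoothable} (that is, $X$-smoothable) maximal chains. Reconciling these two families is exactly what Corollary~\ref{(1,r)-in-U} achieves once $(\mathrm{E})$ is known; in the other direction the inclusion is automatic, since a $U$-smoothable chain is a fortiori smoothable.

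For the ``only if'' direction, assume $\mathscr{U}$ satisfies $(\mathrm{S})$ and $(\mathrm{E})$. First I would invoke Corollary~\ref{(1,r)-in-U}: since $X$ is normal proper and $(\mathrm{E})$ holds, $X_{\min}^-\cap X_{\max}^+\subseteq U$, and therefore \emph{every} smoothable maximal chain of orbits in $X$ is already $U$-smoothable. Feeding this into Proposition~\ref{f1} (using $(\mathrm{E})$) shows that the image of every smoothable maximal chain of orbits meets $U$, i.e.\ the intersection is nonempty. On the other hand Proposition~\ref{f2} (using $(\mathrm{S})$) tells us this intersection has one of the shapes $\emptyset$, (b), (c), (d) listed there; the nonemptiness just established discards the empty case, leaving precisely the shapes (b), (c), (d), which are the forms (1), (2), (3) of the theorem.

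For the ``if'' direction, suppose the image of every smoothable maximal chain of orbits in $X$ meets $U$ in one of the forms (1)--(3). These three forms are among the four shapes $\emptyset$, (b), (c), (d) permitted by Proposition~\ref{f2}, so $(\mathrm{S})$ follows at once. For $(\mathrm{E})$, note that any $U$-smoothable maximal chain of orbits is in particular smoothable (its generic fibre maps into $U(K)\subseteq X(K)$), hence by hypothesis its image meets $U$; this is exactly the criterion of Proposition~\ref{f1}, so $(\mathrm{E})$ holds as well. I do not anticipate a genuine obstacle here: the entire content lies in the cited results, and the only delicate bookkeeping---the smoothable versus $U$-smoothable distinction, and the term-by-term identification of the lists $\{\emptyset,(b),(c),(d)\}$ and $\{(1),(2),(3)\}$---is handled by Corollary~\ref{(1,r)-in-U} together with a direct inspection of the two lists.
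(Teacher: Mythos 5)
Your proposal is correct and matches the paper's intended argument: the paper states just before Theorem~\ref{summary} that it is obtained by ``coupling'' Corollary~\ref{(1,r)-in-U} with Propositions~\ref{f1} and~\ref{f2}, and your write-up carries out exactly this gluing, correctly handling the $U$-smoothable versus smoothable asymmetry in the two propositions via Corollary~\ref{(1,r)-in-U} in the forward direction and via the trivial inclusion $U(K)\subseteq X(K)$ in the converse.
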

\begin{rmk}\label{rmk:excep}
The form (3) in Theorem \ref{summary} happens if and only if $U \cap X_{\min} \neq \emptyset$ (resp., $U \cap X_{\max} \neq \emptyset$), in which case $X_{\min}^- \subseteq U$ (resp., $X_{\max}^+ \subseteq U$) by Lemma \ref{A^0}. Since $X_{\min}^- \subseteq X$ (resp., $X_{\max}^+ \subseteq X$) is a $\mathbf{G}_m$-invariant open subset and admits a proper good quotient $X_{\min}$ (resp., $X_{\max}$), it is reasonable to expect that $U=X_{\min}^-$ (resp., $U=X_{\max}^+$). This follows, for example, if $X_{\min}^- \subseteq U$ (resp., $X_{\max}^+ \subseteq U$) is closed. This is exactly the implication of the last condition: $\Theta$-reductivity.
\end{rmk}
\subsection{Consequence of ($\Theta$)}
Denote by 
\begin{itemize}
\item
$\mathrm{B}\mathbf{G}_m:=[\mathrm{Spec}(\mathbf{Z})/\mathbf{G}_m]$ the classifying stack of $\mathbf{G}_m$.
\item
$\Theta:=[\mathbf{A}^1/\mathbf{G}_m]$ the quotient stack defined by the standard contracting action of $\mathbf{G}_m$ on the affine line $\mathbf{A}^1=\mathrm{Spec}(\mathbf{Z}[t])$.
\end{itemize}
Both stacks are defined over $\mathrm{Spec}(\mathbf{Z})$ and therefore pull back to any base. For any DVR $R$ with residue field $\kappa$, let $\Theta_R:=\Theta \times \mathrm{Spec}(R)$ and $0:=[0/\mathbf{G}_m] \times \mathrm{Spec}(\kappa)$ be its unique closed point.
\begin{defn}[\cite{MR4665776}, Definition 3.10]

A morphism $f: \mathscr{X} \to \mathscr{Y}$ of locally noetherian algebraic stacks is $\Theta$\emph{-reductive} if for every DVR $R$, any commutative diagram
\[
\begin{tikzcd}
\Theta_R-\{0\} \ar[r] \ar[d,hook] & \mathscr{X} \ar[d,"f"] \\
\Theta_R \ar[ur,dashed,"{\exists !}"'] \ar[r] & \mathscr{Y}
\end{tikzcd}
\]
of solid arrows can be uniquely filled in.
\end{defn}
$\Theta$-reductivity of quotient stacks can be characterized in terms of the evaluation maps to the atlas (see \cite[Proposition 3.13]{MR4665776}). In our setup it has the following form.
\begin{lem}\label{theta-red}
Let $X$ be a variety over a field $k$ with a $\mathbf{G}_m$-action. Let $U \subseteq X$ be a $\mathbf{G}_m$-invariant open subset. The quotient stack $\mathscr{U}=[U/\mathbf{G}_m]$ is $\Theta$-reductive if and only if the morphism $\mathrm{ev}_1: (U \cap X_i)^\pm \to U$ is a closed immersion for each $i \in \pi_0(X^0)$. In particular, if $U$ has no fixed points, then $\mathscr{U}$ is $\Theta$-reductive.
\end{lem}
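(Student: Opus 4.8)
The plan is to make \cite[Proposition 3.13]{MR4665776}, which characterizes $\Theta$-reductivity of a quotient stack through the evaluation morphisms from its cocharacter loci to the atlas, explicit in the case $\mathscr{U}=[U/\mathbf{G}_m]$, using Drinfeld's description of the fixed-point and attractor functors $U^{0},U^{\pm}$. First I would unwind the definition. A morphism $\Theta_R\to[U/\mathbf{G}_m]$ over $\mathrm{Spec}(k)$ is a $\mathbf{G}_m$-torsor on $\Theta_R=[\mathbf{A}^1_R/\mathbf{G}_m]$ together with a $\mathbf{G}_m$-equivariant morphism to $U$; since $\mathrm{Pic}(\Theta_R)\cong\mathbf{Z}$ the torsor has a well-defined weight $n\in\mathbf{Z}$, and the whole datum is equivalent to a $\mathbf{G}_m$-equivariant morphism $\mathbf{A}^1_R\to U$ with $t$ of weight $n$. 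The closed point $0\in\Theta_R$ lies over the codimension-$2$ point $(t,\pi)$ of the regular scheme $\mathbf{A}^1_R$, so $\mathbf{G}_m$-torsors extend uniquely over it; hence $\Theta$-reductivity of $\mathscr{U}$ amounts to the assertion that for every DVR $R$ and every $n\in\mathbf{Z}$, each $\mathbf{G}_m$-equivariant morphism $\mathbf{A}^1_R\setminus\{(t,\pi)\}\to U$ of weight $n$ extends (necessarily uniquely, $U$ being separated) to $\mathbf{A}^1_R\to U$. When $n=0$ such a morphism is $\mathbf{G}_m$-invariant, hence factors through the good quotient $\mathrm{Spec}(R)$ and extends trivially, so only $n\neq0$ is at issue.

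Next I would recognise the weight-$n$ equivariant morphisms in terms of attractor schemes. For $n\neq0$, write $n=\pm d$ with $d>0$; inspecting the orbit through $t=1$ one sees that a $\mathbf{G}_m$-equivariant morphism $\mathbf{A}^1_R\to U$ of weight $\pm d$ is the same as an $R$-point of the closed subscheme $(U^{\mu_d})^{\pm}:=\underline{\mathrm{Map}}_R^{\mathbf{G}_m}(\mathbf{A}^{\pm 1},U^{\mu_d})=U^{\pm}\times_U U^{\mu_d}$ of $U^{\pm}$ --- the factor $U^{\mu_d}$ appearing because the orbit map factors through $\mathbf{G}_m/\mu_d$, and the limit at the origin automatically landing in the closed $\mathbf{G}_m$-stable $U^{\mu_d}$ --- while on the puncture $\mathbf{A}^1_R\setminus\{(t,\pi)\}$ one gets an $R$-point of $U^{\mu_d}$ whose restriction to $\mathrm{Spec}(K)$ already lies in $(U^{\mu_d})^{\pm}$. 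So the weight-$(\pm d)$ case of $\Theta$-reductivity is precisely the valuative criterion for properness for $\mathrm{ev}_1\colon(U^{\mu_d})^{\pm}\to U^{\mu_d}$, which --- $\mathrm{ev}_1$ being a monomorphism of finite type (\cite[Proposition 1.2.2, Corollary 1.4.3]{Drinfeld-Gm}), hence separated with automatic uniqueness of lifts --- is equivalent to $\mathrm{ev}_1$ being universally closed, i.e. a closed immersion. Base-changing along $U^{\mu_d}\hookrightarrow U$, the condition for every $d\geq 1$ follows from the one for $d=1$. Hence $\mathscr{U}$ is $\Theta$-reductive if and only if $\mathrm{ev}_1\colon U^{\pm}\to U$ is a closed immersion.

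It remains to pass to the fixed-point components. Each $X_i$ is a connected component of $X^{0}$, hence $U\cap X_i$ is open and closed in $U^{0}$; since $\mathrm{ev}_0\colon U^{\pm}\to U^{0}$ is continuous, $(U\cap X_i)^{\pm}=\mathrm{ev}_0^{-1}(U\cap X_i)$ is open and closed in $U^{\pm}$, and $U^{\pm}=\bigsqcup_{i\in\pi_0(X^{0})}(U\cap X_i)^{\pm}$ because the limit of a point of $U^{\pm}$ lies in exactly one component. As these finitely many pieces map to $U$ with pairwise disjoint images, $\mathrm{ev}_1\colon U^{\pm}\to U$ is a closed immersion if and only if each $\mathrm{ev}_1\colon(U\cap X_i)^{\pm}\to U$ is a closed immersion; together with the previous paragraph this is the stated criterion. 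Finally, if $U$ contains no fixed point then $U\cap X^{0}=\emptyset$, but the limit of any point of $U^{\pm}$ is $\mathbf{G}_m$-fixed, so $U^{\pm}=\emptyset$ and $\mathrm{ev}_1$ is vacuously a closed immersion; hence $\mathscr{U}$ is $\Theta$-reductive.

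The one genuinely delicate point --- the only place where this does more than transcribe \cite[Proposition 3.13]{MR4665776} --- is the middle step: one must track carefully how the $\mathbf{G}_m$-equivariant mapping schemes appearing in the definition of $\Theta$-reductivity compare with Drinfeld's attractor functors after twisting by the cocharacter $t\mapsto t^{n}$, and in particular verify the identity $(U^{\mu_d})^{\pm}=U^{\pm}\times_U U^{\mu_d}$, which collapses everything to the primitive cocharacters. Once that is in hand the passage from ``valuative criterion'' to ``closed immersion'' is purely formal, using only that $\mathrm{ev}_1\colon U^{\pm}\to U$ is a finite-type monomorphism, and the component decomposition is bookkeeping.
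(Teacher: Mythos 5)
Your overall strategy matches the paper's: translate \cite[Proposition 3.13]{MR4665776} into the criterion $\mathrm{ev}_1\colon U^{\pm}\to U$ closed immersion, then decompose by fixed-point components. The component decomposition and the $U^{0}=\emptyset$ case are handled exactly as in the paper (which invokes \cite[Lemma 1.4.7]{Drinfeld-Gm} twice for the decomposition). But where the paper simply cites \cite[Proposition 3.13]{MR4665776} together with \cite[Proposition 5.24 (1)]{MR4088350}, you re-derive the identification of $U^{+}_{\lambda}$ from scratch, and there is a genuine error in that re-derivation.

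The error is in the parameterization of weight-$n$ morphisms $\Theta_{R}\to[U/\mathbf{G}_m]$. You claim such a morphism amounts to a $\mathbf{G}_m$-equivariant morphism $f\colon\mathbf{A}^{1}_{R}\to U$ where $t$ has weight $n$, i.e. $f(s^{n}t)=s\cdot f(t)$, and hence (for $|n|=d$) necessarily lands in $U^{\mu_{d}}$, so that the relevant scheme is $(U^{\mu_{d}})^{\pm}=U^{\pm}\times_{U}U^{\mu_{d}}$. This is not the correct descent computation. Working through descent along the torsor $\mathbf{A}^{1}_{R}\to\Theta_{R}$, a weight-$n$ torsor on $\Theta_{R}$ together with an equivariant map to $U$ corresponds to a morphism $g\colon\mathbf{A}^{1}_{R}\to U$ satisfying $g(st)=s^{\pm n}\cdot g(t)$ (the $\lambda$-equivariance of \cite[Proposition 3.13]{MR4665776}, with $\lambda(s)=s^{\pm n}$ and the standard weight-$1$ action on $\mathbf{A}^{1}_{R}$), and such $g$ is simply given by $g(t)=t^{\pm n}\cdot g(1)$ with no $\mu_{d}$-constraint on $g(1)$; the relevant scheme is $U^{+}_{\lambda}\cong U^{\pm}$, not $(U^{\mu_{d}})^{\pm}$. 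You can see these differ already for $U=\mathbf{A}^{1}$ with the weight-$1$ action and $n=2$: morphisms $\Theta_{R}\to[\mathbf{A}^{1}/\mathbf{G}_m]$ with torsor weight $2$ are parameterized by $H^{0}(\Theta_{R},\mathscr{O}(2))\cong R$, whereas $(\mathbf{A}^{1,\mu_{2}})^{+}=\{0\}$, so your parameterization would see only the zero morphism. The sentence ``the limit at the origin automatically landing in the closed $\mathbf{G}_m$-stable $U^{\mu_{d}}$'' is where this goes wrong.

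The error turns out to be harmless to the final conclusion, but for a reason you should make explicit: with the correct identification $U^{+}_{\lambda}\cong U^{\pm}$ for every nonzero $\lambda$, all weight-$n$ conditions ($n\neq 0$) are literally the same condition, so there is nothing to reduce to $d=1$; your $(U^{\mu_{d}})^{\pm}$ conditions for $d>1$ are strictly weaker (being base changes of $d=1$ along closed immersions) and the chain of equivalences still closes because the $n=\pm 1$ case, which you treat correctly, already carries the full strength. The paper sidesteps the whole issue by treating \cite[Proposition 3.13]{MR4665776} and \cite[Proposition 5.24 (1)]{MR4088350} as black boxes. You flag this descent computation as ``the one genuinely delicate point'' --- rightly so; it is precisely where the argument as written is incorrect.
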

\begin{proof}
By \cite[Proposition 3.13 and Remark 3.14]{MR4665776} the quotient stack $\mathscr{U}$ is $\Theta$-reductive if and only if the evaluation morphism $\mathrm{ev}_1: U^+_\lambda \to U$ is a closed immersion for any cocharacter $\lambda \in X_*(\mathbf{G}_m)$, where $U^+_\lambda$ is the scheme representing the functor $\underline{\mathrm{Map}}_k^{\lambda}(\mathbf{A}^1,U)$ of $\lambda$-equivariant morphisms. If $n_\lambda \in \mathbf{Z}$ is the integer corresponding to $\lambda$, then by \cite[Proposition 5.24 (1)]{MR4088350} we have
\[
U_{\lambda}^0=
\begin{cases}
U^0 & \text{if } n_\lambda \neq 0 \\
U & \text{if } n_\lambda=0
\end{cases}
\text{ and }
U_{\lambda}^+=
\begin{cases}
U^+ & \text{if } n_\lambda > 0 \\
U & \text{if } n_\lambda=0 \\
U^- & \text{if } n_\lambda < 0
\end{cases}.
\]
Thus $\mathscr{U}$ is $\Theta$-reductive if and only if $\mathrm{ev}_1: U^{\pm} \to U$ is a closed immersion. Finally applying \cite[Lemma 1.4.7]{Drinfeld-Gm} twice yields that
\begin{align*}
    U^{\pm}&=\mathrm{ev}_0^{-1}(U^0)=\mathrm{ev}_0^{-1}(U \cap X^0)=\mathrm{ev}_0^{-1}\left(\bigsqcup_{i \in \pi_0(X^0)} U \cap X_i\right) \\
    &=\bigsqcup_{i \in \pi_0(X^0)} \mathrm{ev}_0^{-1}(U \cap X_i) \\
    &=\bigsqcup_{i \in \pi_0(X^0)} (U \cap X_i)^\pm,
\end{align*}
and we are done.
\end{proof}
\subsection{A topological characterization}
As highlighted in Remark \ref{rmk:excep}, if for an open subset $U \subseteq X$ the form (3) in Theorem \ref{summary} does not happen, then 
\[
U \cap X_{\min}=\emptyset=U \cap X_{\max}.
\]
In this case $X_{\min}$ and $X_{\max}$ are in the complement $Z:=X-U$, and they cannot be connected via smoothable maximal chains of orbits by Theorem \ref{summary}. Hence, it is highly probable that the complement $Z$ is disconnected. Notably, this characterizes the properness of the good quotient of $U$.
\begin{prop}[\cite{MR709583}, Theorem 1.4 if $k=\mathbf{C}$]\label{prop:2-connected}
Let $X$ be a normal proper variety over a field $k$ with a $\mathbf{G}_m$-action. Let $U \subseteq X - X^0$ be a $\mathbf{G}_m$-invariant open dense subset with a separated good quotient $U/\mathbf{G}_m$. The following are equivalent: 
\begin{enumerate}
\item
The separated good quotient $U/\mathbf{G}_m$ is proper.
\item
The closed complement $Z:=X-U$ has two connected components.
\end{enumerate}
In this case, $X_{\min}$ and $X_{\max}$ are in the different connected components of $Z$.
\end{prop}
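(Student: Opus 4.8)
Write $\mathscr{U}=[U/\mathbf{G}_m]$. Since $U\subseteq X-X^0$ has no fixed points, Lemma~\ref{theta-red} shows $\mathscr{U}$ is $\Theta$-reductive, and since $U/\mathbf{G}_m$ is a separated good quotient, $\mathscr{U}$ is S-complete (by the equivalence ``S-complete $\Leftrightarrow$ adequate moduli space separated'' of \cite[Proposition 3.48]{MR4665776}). Hence, by Theorem~\ref{thm0} applied to $\mathscr{U}$ (locally reductive by Sumihiro, of finite presentation with affine diagonal, and carrying $U/\mathbf{G}_m$ as its adequate $=$ good moduli space), the quotient $U/\mathbf{G}_m$ is proper if and only if $\mathscr{U}$ also satisfies $\mathrm{(E)}$. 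By Proposition~\ref{f1} together with Corollary~\ref{(1,r)-in-U}, $\mathscr{U}$ satisfies $\mathrm{(E)}$ if and only if the image of every smoothable maximal chain of orbits in $X$ meets $U$. So the plan is to show that the latter condition is equivalent to $Z:=X-U$ having exactly two connected components, and to read off the last assertion of the Proposition from Corollary~\ref{1727} (which, once $\mathscr{U}$ is known to be S-complete, already says that a disconnected $Z$ has exactly two components with $X_{\min}$ and $X_{\max}$ in different ones).

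\textbf{The easy implication $(2)\Rightarrow(1)$.}
Assume $Z=Z_1\sqcup Z_2$ with $Z_1,Z_2$ the two connected components; by Corollary~\ref{1727} we may label them so that $X_{\min}\subseteq Z_1$ and $X_{\max}\subseteq Z_2$. Let $f_{\mathbf{K}}\colon C_{\mathbf{K}}\to X$ be any smoothable maximal chain of orbits. Then $f_{\mathbf{K}}(0_{\mathrm{first}})\in X_{\min}$ and $f_{\mathbf{K}}(\infty_{\mathrm{last}})\in X_{\max}$, so the (connected) image $f_{\mathbf{K}}(C_{\mathbf{K}})\subseteq |X|$ meets both $Z_1$ and $Z_2$; it therefore cannot be contained in the disconnected set $Z=Z_1\sqcup Z_2$, i.e.\ it meets $U$. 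By the framework above $\mathscr{U}$ satisfies $\mathrm{(E)}$, hence (with $\mathrm{(S)}$ and $(\Theta)$) it has a proper adequate moduli space, which is $U/\mathbf{G}_m$.

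\textbf{The implication $(1)\Rightarrow(2)$: combinatorial part.}
Assume $U/\mathbf{G}_m$ is proper, so $\mathscr{U}$ satisfies $\mathrm{(S)}$, $\mathrm{(E)}$, $(\Theta)$. First I would record, using Theorem~\ref{summary} together with $U\cap X^0=\emptyset$, that the image of every smoothable maximal chain of orbits meets $U$ in \emph{exactly one} orbit $\mathbf{G}_m.x$ with $x\notin X^0$: forms (2) and (3) of Theorem~\ref{summary} (and, a fortiori, any intersection pattern containing two or more orbits) are excluded because they force a $\mathbf{G}_m$-fixed point to lie in $U$. Consequently, for such a chain $C_{\mathbf{K}}\to X$ the complement of the single surviving orbit in $f_{\mathbf{K}}(C_{\mathbf{K}})$ splits into exactly two connected pieces, one containing a point of $X_{\min}$, the other a point of $X_{\max}$, and both contained in $Z$. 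Since by Proposition~\ref{cover} every point $z\in Z$ lies on some smoothable maximal chain, $z$ is joined inside $Z$ either to $X_{\min}$ or to $X_{\max}$. As $X_{\min}$ and $X_{\max}$ are connected (Remark~\ref{0948-4}), this gives $Z=Z_{\min}\cup Z_{\max}$, where $Z_{\min}$ (resp.\ $Z_{\max}$) is the connected component of $Z$ containing $X_{\min}$ (resp.\ $X_{\max}$). It remains to prove $Z_{\min}\neq Z_{\max}$; then $Z$ has exactly two components and Corollary~\ref{1727} finishes the proof, including the final assertion.

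\textbf{The crux: $X_{\min}$ and $X_{\max}$ lie in different components of $Z$.}
Suppose for contradiction that $Z_{\min}=Z_{\max}$, i.e.\ $Z$ is connected. The plan here is a weight comparison on (intersection) cohomology — the step where, in the singular case, one replaces $H^\ast$ by $IH^\ast$. Because $U/\mathbf{G}_m$ is proper and $\mathbf{G}_m$ acts on $U\subseteq X-X^0$ with finite stabilizers, the equivariant intersection cohomology $IH^\ast_{\mathbf{G}_m}(U)\cong IH^\ast(U/\mathbf{G}_m)$ is finite-dimensional over $\mathbf{Q}$; on the other hand $IH^\ast_{\mathbf{G}_m}(X)$ is a free module of positive rank over $H^\ast_{\mathbf{G}_m}(\mathrm{pt})=\mathbf{Q}[u]$, where $u$ records the $\mathbf{G}_m$-weight. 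Feeding these into the long exact sequence of the pair $Z\subseteq X$, the image of $IH^\ast_{\mathbf{G}_m,Z}(X)\to IH^\ast_{\mathbf{G}_m}(X)$ has finite corank, so $IH^\ast_{\mathbf{G}_m,Z}(X)$ must carry classes of arbitrarily large $u$-degree. Using additivity of cohomology with supports over the components of $Z$ and hyperbolic-localization near the source and the sink — where the $\mathbf{G}_m$-weights occurring in the local contribution are bounded below at $X_{\min}$ and above at $X_{\max}$ — one sees that no single such class can be supported near both $X_{\min}$ and $X_{\max}$ unless $Z$ is disconnected. This is the contradiction, and it is the step I expect to be the main obstacle: making the localization argument work uniformly over an arbitrary base field and in the singular (intersection cohomology) setting is the delicate point, everything else being formal consequences of Theorems~\ref{thm0}, \ref{thm:S-com-new}, \ref{summary} and Corollaries~\ref{(1,r)-in-U}, \ref{1727}.
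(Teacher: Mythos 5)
Your overall architecture differs from the paper's. The paper does \emph{not} route through the existence criteria (E), (S), ($\Theta$): it proves the equivalence in one shot via a five-term exact sequence in (intersection) cohomology, using the exact triangle for $U\subseteq X\supseteq Z$ together with two computations — an isomorphism $H^1_c(U,\mathbf{K})\cong H^0_c(U/\mathbf{G}_m,\mathbf{K})$ via the Leray spectral sequence for $U\to U/\mathbf{G}_m$, and the vanishing of $\jmath_*\colon H^1_c(U,\mathbf{K})\to H^1(X,\mathbf{K})$ (or its $\mathrm{IH}$ version) by a weight comparison: the source is pure of weight $0$ by Deligne's Hodge~III / Weil~II, the target pure of weight $1$ by smoothness (or by Gabber/BBD purity of $\mathrm{IC}$ in the normal case). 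This identifies $H^0(Z,\mathbf{K})/\mathbf{K}$ with $H^0_c(U/\mathbf{G}_m,\mathbf{K})$, and properness of $U/\mathbf{G}_m$ versus two-component-ness of $Z$ both read off directly.

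Your reduction — deduce the result from $\Theta$-reductivity (Lemma~\ref{theta-red}), S-completeness, and the characterization of (E) in terms of smoothable maximal chains (Proposition~\ref{f1}, Corollary~\ref{(1,r)-in-U}) plus Theorem~\ref{thm0} — and your direction $(2)\Rightarrow(1)$ are sound: a smoothable maximal chain has connected image touching both $Z_1\ni X_{\min}$ and $Z_2\ni X_{\max}$, hence must meet $U$. The combinatorial half of $(1)\Rightarrow(2)$, which via Theorem~\ref{summary} and Proposition~\ref{cover} shows $Z$ is the union of the component containing $X_{\min}$ and the component containing $X_{\max}$, is also correct and is essentially Corollary~\ref{1727}.

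The genuine gap is the crux: showing $Z$ is actually disconnected. Your sketch via equivariant intersection cohomology and hyperbolic localization does not, as written, contain an argument. You assert that classes of arbitrarily high $u$-degree in $IH^*_{\mathbf{G}_m,Z}(X)$ cannot be ``supported near both $X_{\min}$ and $X_{\max}$ unless $Z$ is disconnected,'' but if $Z$ \emph{is} connected there is no additivity over components to exploit, and all of $X^0\subseteq Z$ contributes at once, so the localization dichotomy you want simply isn't available; you would need a concrete obstruction. Moreover you lean on unquoted inputs (freeness of $IH^*_{\mathbf{G}_m}(X)$ over $\mathbf{Q}[u]$ in positive characteristic, an $\ell$-adic hyperbolic localization for $\mathrm{IC}$ with supports) whose availability over an arbitrary base field is exactly the kind of technicality the paper avoids by working with a five-term sequence and a clean weight-zero vs.\ weight-one mismatch. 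If you want a route that closes the gap, the paper's computation is the intended one: the isomorphism $\mathrm{IH}^1_c(U)\cong H^0_c(U/\mathbf{G}_m,\mathbf{K})$ plus the purity of $\mathrm{IH}^1(X)$ in weight $1$ against the weight-$0$ source forces $\jmath_*=0$, and then the exact sequence $0\to\mathbf{K}\to H^0(Z,\mathbf{K})\to H^0_c(U/\mathbf{G}_m,\mathbf{K})\to 0$ gives both implications at once.

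Two small cautions about your reduction even if the crux were supplied: you should note explicitly that $U/\mathbf{G}_m$ is connected (because $U$ is dense in the irreducible $X$), so that ``$\mathscr{U}$ admits a proper adequate moduli space'' from Theorem~\ref{thm0} is indeed equivalent to ``the given $U/\mathbf{G}_m$ is proper'' by uniqueness of moduli spaces; and that the final claim about $X_{\min},X_{\max}$ in different components is a citation of Corollary~\ref{1727}, which you invoke correctly.
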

First we deal with the relatively easy case that $X$ is smooth and then explain how to modify the arguments to work in general.
\subsubsection{Smooth case}
Our proof in this case is motivated by \cite[Theorem 1.3]{MR784002}. Hereafter, cohomology is either
\begin{enumerate}
\item
singular cohomology with coefficient $\mathbf{Q}$ if $k \subseteq \mathbf{C}$, or
\item
\'{e}tale cohomology with coefficient $\mathbf{Q}_\ell$ with $\ell \neq \mathrm{char}(k)$ if $k \nsubseteq \mathbf{C}$.
\end{enumerate}
Denote by $\mathbf{K}$ the coefficient in both cases. Since $U/\mathbf{G}_m$ is proper (resp., $Z$ has two connected components) if and only if $H^0_c(U/\mathbf{G}_m,\mathbf{K})=\mathbf{K}$ (resp., $H^0(Z,\mathbf{K})=\mathbf{K}^2$), it is equivalent to show
\[
H^0_c(U/\mathbf{G}_m,\mathbf{K})=\mathbf{K} \text{ if and only if } H^0(Z,\mathbf{K})=\mathbf{K}^2.
\]
Let $\jmath: U \hookrightarrow X$ (resp., $\imath: Z \hookrightarrow X$) be the open (resp., closed) immersion. Applying the cohomology functor $H_c^*(X,-)$ to the exact triangle $\mathbf{R}\jmath_!\mathbf{K} \to \mathbf{K} \to \imath_*\mathbf{K} \xrightarrow{+1}$ gives rise to a long exact sequence
\[
\begin{tikzcd}
H_c^0(U,\mathbf{K}) \ar[r] \ar[d,equal] & H^0(X,\mathbf{K}) \ar[r] \ar[d,equal] & H^0(Z,\mathbf{K}) \ar[r] \ar[d,equal] & H_c^1(U,\mathbf{K}) \ar[r,"\jmath_*"] \ar[d,equal,"(\ast)"'] & H^1(X,\mathbf{K}) \ar[d,equal] \\
0 \ar[r] & \mathbf{K} \ar[r] & H^0(Z,\mathbf{K}) \ar[r] & H^0_c(U/\mathbf{G}_m,\mathbf{K}) \ar[r,"0","(\ast\ast)"'] & H^1(X,\mathbf{K})
\end{tikzcd}
\]
and we claim that it equals to the second row. The only non-trivial parts are $(\ast)$ and $(\ast\ast)$. Indeed, $(\ast)$ is a Leray spectral sequence computation and $(\ast\ast)$ is via comparing appropriate weights on both sides.
\begin{claim}\label{proper}
The composition $\lambda: U \xrightarrow{f} \mathscr{U} \xrightarrow{g} U/\mathbf{G}_m$, where $f$ is the $\mathbf{G}_m$-torsor and $g$ is the good moduli space, induces an isomorphism
\[
\lambda^*: H^0_c(U/\mathbf{G}_m,\mathbf{K}) \xrightarrow{\sim} H_c^1(U,\mathbf{K}).
\]
\end{claim}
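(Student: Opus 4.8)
The plan is to realise $\lambda^*$ as a composite
\[
H^0_c(U/\mathbf{G}_m,\mathbf{K})\xrightarrow{g^*}H^0_c(\mathscr{U},\mathbf{K})\xrightarrow{\delta}H^1_c(U,\mathbf{K}),
\]
where $g^*$ is pullback along the good moduli space $g\colon\mathscr{U}\to U/\mathbf{G}_m$ and $\delta$ is the boundary map in the Gysin sequence of the $\mathbf{G}_m$-torsor $f\colon U\to\mathscr{U}$, and then to check that each of the two maps is an isomorphism.

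First I would show that $g$ induces an isomorphism on compactly supported cohomology. Because $U\subseteq X-X^0$, no geometric point of $U$ is $\mathbf{G}_m$-fixed, so every stabiliser of $\mathscr{U}=[U/\mathbf{G}_m]$ is a proper, hence finite, subgroup scheme of $\mathbf{G}_m$; thus $\mathscr{U}$ has finite inertia and $g$ is its coarse moduli space, in particular proper. The geometric fibres of $g$ are classifying stacks $\mathrm{B}H$ of finite group schemes $H$, and since $\mathbf{K}$ has characteristic $0$ the group $H^{>0}(\mathrm{B}H,\mathbf{K})$ is annihilated by $|H|\in\mathbf{K}^{\times}$ (transfer), hence vanishes; so $\mathbf{K}\xrightarrow{\sim}Rg_*\mathbf{K}$. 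As $g$ is proper, $Rg_!=Rg_*$, whence $H^*_c(\mathscr{U},\mathbf{K})=H^*_c(U/\mathbf{G}_m,Rg_!\mathbf{K})=H^*_c(U/\mathbf{G}_m,\mathbf{K})$, the identification being $g^*$.

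Next I would run the Gysin sequence for the torsor. Write $f\colon U\to\mathscr{U}$ as the complement of the zero section $i\colon\mathscr{U}\hookrightarrow L$ in the associated line bundle $p\colon L\to\mathscr{U}$, with open immersion $j\colon U\hookrightarrow L$. Applying $R\Gamma_c(L,-)$ to the localisation triangle $j_!\mathbf{K}\to\mathbf{K}\to i_*\mathbf{K}\xrightarrow{+1}$ gives a long exact sequence
\[
\cdots\to H^n_c(U)\to H^n_c(L)\to H^n_c(\mathscr{U})\xrightarrow{\delta}H^{n+1}_c(U)\to\cdots.
\]
Since $p$ is a line bundle, $Rp_!\mathbf{K}\simeq\mathbf{K}(-1)[-2]$, hence $H^n_c(L)\cong H^{n-2}_c(\mathscr{U})(-1)$; in particular $H^0_c(L)=H^1_c(L)=0$, so around $n=0$ the sequence reduces to $0\to H^0_c(\mathscr{U})\xrightarrow{\delta}H^1_c(U)\to0$. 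Thus $\delta$ is an isomorphism and $\lambda^*=\delta\circ g^*$ is the desired one.

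The step I expect to carry the real content is the cohomological invisibility of $g$: it is the only place where the hypothesis $U\cap X^0=\emptyset$ is used essentially — with fixed points present, $\mathscr{U}$ would acquire positive-dimensional stabilisers and $Rg_*\mathbf{K}$ would contain the cohomology ring of $\mathrm{B}\mathbf{G}_m$ — and it is where one must argue uniformly across the two coefficient conventions, including the infinitesimal group schemes $\mu_{p^v}$ that occur when $p=\mathrm{char}\,k$ divides a stabiliser order (there the transfer for finite flat morphisms still applies). One should also keep track of the single Tate twist in the Thom isomorphism $H^n_c(L)\cong H^{n-2}_c(\mathscr{U})(-1)$: harmless here, but relevant to the weight comparison $(\ast\ast)$ that follows.
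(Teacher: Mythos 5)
Your proof is correct and takes essentially the same route as the paper: both compute $\mathbf{R}f_!\mathbf{K}$ via the associated line bundle, the localization triangle, and the Thom/Poincar\'{e}-duality isomorphism $\mathbf{R}p_!\mathbf{K}\simeq\mathbf{K}[-2]$, and both use the finiteness of the stabilizers to show $g$ is cohomologically invisible. The only cosmetic difference is organizational — you run the Gysin sequence globally on $L$ and transfer across $g$, whereas the paper computes $\mathbf{R}^i\lambda_!\mathbf{K}$ as a sheaf on $U/\mathbf{G}_m$ and reads off $H^1_c(U)$ from the Leray spectral sequence edge term.
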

\begin{proof}
The Leray spectral sequence for $\lambda$ reads
\[
E_2^{p,q}:=H_c^p(U/\mathbf{G}_m,\mathbf{R}^q\lambda_!\mathbf{K}) \Rightarrow H_c^{p+q}(U,\mathbf{K}).
\]
Since $\mathbf{R}^pg_! \circ \mathbf{R}^qf_! \Rightarrow \mathbf{R}^{p+q}\lambda_!$, we will compute $\mathbf{R}^i\lambda_!$ step-by-step.
\begin{enumerate}
\item
Firstly we have
\begin{equation}\label{sm1}
\mathbf{R}^if_!\mathbf{K} \cong \begin{cases}
0 & \text{if } i=0 \\
\mathbf{K} & \text{if } i=1,2 \\
0 & \text{if } i>2
\end{cases}.
\end{equation}
Indeed, consider the following diagram
\[
\begin{tikzcd}
U \ar[d,"f"'] \ar[r,hook,"j"] & {[U \times \mathbf{A}^1/\mathbf{G}_m]} \ar[dl,"\bar{f}"'] \\
\mathscr{U} \ar[bend right,ur,"s"']
\end{tikzcd}
\]
where $\bar{f}$ is the line bundle associated to the $\mathbf{G}_m$-torsor $f$, $s$ is its zero section and $j$ is the open immersion given by $x \mapsto (x,1)$. Applying $\mathbf{R}\bar{f}_!$ to the exact triangle $\mathbf{R}j_!\mathbf{K} \to \mathbf{K} \to s_*\mathbf{K} \xrightarrow{+1}$ yields
\begin{equation}\label{1931-3}
\mathbf{R}f_!\mathbf{K} \to \mathbf{R}\bar{f}_!\mathbf{K} \to \mathbf{K} \xrightarrow{+1}
\end{equation}
To conclude we claim that there is a quasi-isomorphism $\mathbf{R}\bar{f}_!\mathbf{K} \simeq \mathbf{K}[-2]$ and thus we obtain \eqref{sm1} by looking at the long exact sequence in cohomology associated to the exact triangle \eqref{1931-3}. Indeed, since the fibres of $\bar{f}$ are $\mathbf{A}^1$, which are $\mathbf{K}$-acyclic, the morphism $\mathbf{R}\bar{f}_*\mathbf{K} \to \mathbf{K}$ (applying $\mathbf{R}\bar{f}_*$ to the exact triangle $\mathbf{R}j_!\mathbf{K} \to \mathbf{K} \to s_*\mathbf{K} \xrightarrow{+1}$) is a quasi-isomorphism and Poincar\'{e} duality tells us that $\mathbf{R}\bar{f}_! \cong \mathbf{R}\bar{f}_*[-2]$.
\item
Secondly we have
\begin{equation}\label{sm2}
\mathbf{R}^ig_!\mathbf{K} \cong \begin{cases}
\mathbf{K} & \text{if } i=0 \\
0 & \text{if } i \neq 0
\end{cases}.
\end{equation}
Indeed, since $U$ has no fixed points, for any point $\bar{x} \in U/\mathbf{G}_m$, the fiber $g^{-1}(\bar{x})=[\mathbf{G}_m.x/\mathbf{G}_m]=\mathrm{B}I_x$ where $x \in \lambda^{-1}(\bar{x})$ is any point and $I_x \subseteq \mathbf{G}_m$ is its stabilizer group, which is finite. Therefore
\[
(\mathbf{R}^ig_!\mathbf{K})_{\bar{x}}=H^{i}_c(\mathrm{B}I_x,\mathbf{K})=H^{-i}(\mathrm{B}I_x,\mathbf{K})=\begin{cases}
\mathbf{K} & \text{if } i=0 \\
0 & \text{if } i \neq 0
\end{cases}.
\]
This shows $\mathbf{R}^ig_!\mathbf{K}=0$ for $i \neq 0$. As for $i=0$, the natural morphism $g_!\mathbf{K} \to g_*\mathbf{K} \cong \mathbf{K}$ induces isomorphisms on stalks, witnessing $g_!\mathbf{K} \cong \mathbf{K}$.
\end{enumerate}
Altogether, this shows
\begin{equation}\label{1150-4}
\mathbf{R}^i\lambda_!\mathbf{K}=g_! \circ \mathbf{R}^if_!\mathbf{K}=\begin{cases}
\mathbf{K} & \text{if } i=1,2 \\
0 & \text{if } i \neq 1,2
\end{cases},
\end{equation}
and hence $H_c^1(U,\mathbf{K})=H_c^0(U/\mathbf{G}_m,\mathbf{R}^1\lambda_!\mathbf{K})=H_c^0(U/\mathbf{G}_m,\mathbf{K})$.
\end{proof}
\begin{claim}[\cite{MR784002}, Theorem 1.5 if $k=\mathbf{C}$]\label{zero}
The map $\jmath_*: H_c^1(U,\mathbf{K}) \to H^1(X,\mathbf{K})$ is a zero map.
\end{claim}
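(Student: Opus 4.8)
The plan is to deduce the vanishing of $\jmath_*$ from an incompatibility of weights, in the spirit of the argument of \cite[Theorem 1.5]{MR784002}, feeding in the computation already carried out in Claim \ref{proper}. The essential point is that $\jmath_*\colon H^1_c(U,\mathbf{K})\to H^1(X,\mathbf{K})$ is a morphism in a category of coefficients equipped with a weight filtration that every morphism respects (so that morphisms are strict): mixed Hodge structures when $k\subseteq\mathbf{C}$, and — after base changing to $\overline{k}$ and spreading out to a finitely generated subfield, which is harmless since the assertion is geometric — $\mathbf{Q}_\ell$-sheaves carrying Frobenius weights in the sense of Weil II when $k\not\subseteq\mathbf{C}$. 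Granting this, it suffices to show that the weights occurring in $H^1_c(U,\mathbf{K})$ are disjoint from those occurring in $H^1(X,\mathbf{K})$.

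On the target side this is immediate: $X$ is smooth and proper, so $H^1(X,\mathbf{K})$ is pure of weight $1$. On the source side I would invoke Claim \ref{proper} together with \eqref{1150-4}: the edge map of the Leray spectral sequence for $\lambda\colon U\to U/\mathbf{G}_m$ identifies $H^1_c(U,\mathbf{K})$ with $H^0_c(U/\mathbf{G}_m,\mathbf{R}^1\lambda_!\mathbf{K})=H^0_c(U/\mathbf{G}_m,\mathbf{K})$. I would then note that this identification is one of mixed Hodge structures (resp. of $\mathbf{Q}_\ell$-sheaves with Frobenius weights), since $\mathbf{R}\lambda_!$ carries the constant object to an object of mixed type and its spectral sequence is correspondingly a spectral sequence of such objects; the only Tate twist that has to be tracked is the one appearing in $\mathbf{R}^1 f_!\mathbf{K}$ for the $\mathbf{G}_m$-torsor $f$, and the standard computation $H^1_c(\mathbf{G}_m,\mathbf{K})\cong\mathbf{K}$ of weight $0$ (Poincar\'e duality on $\mathbf{G}_m$, or the long exact sequence in the proof of Claim \ref{proper}) shows that this twist is trivial. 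Consequently $H^1_c(U,\mathbf{K})\cong H^0_c(U/\mathbf{G}_m,\mathbf{K})$ is of Tate type with weights $\le 0$; in particular the weight $1$ does not occur, and strictness of $\jmath_*$ for the weight filtration forces $\jmath_*=0$.

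The hard part, to the extent there is one, is purely bookkeeping: checking that the identification of Claim \ref{proper} is compatible with weights in the $\ell$-adic setting over an arbitrary base field, which is exactly what the spreading-out reduction and the Weil II formalism (or, in characteristic $0$, the theory of mixed Hodge modules) are for. Alternatively, one can bypass Claim \ref{proper} and argue directly that $H^1_c(U,\mathbf{K})$ has weights $\le 0$: since $U\subseteq X-X^0$ has no fixed points, $U$ is, up to finite stabilizer groups, a $\mathbf{G}_m$-torsor over the variety $U/\mathbf{G}_m$, so the Wang-type exact sequence attached to this torsor sandwiches $H^1_c(U,\mathbf{K})$ between $H^{-1}_c$ and $H^0_c$ of $U/\mathbf{G}_m$, hence between $0$ and an object of weight $\le 0$ by the general estimate on compactly supported cohomology of varieties. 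Either way, once the weight incompatibility is in place the conclusion is automatic.
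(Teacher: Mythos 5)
Your proposal is correct and matches the paper's proof in both strategy and substance: identify $H^1_c(U,\mathbf{K})$ with $H^0_c(U/\mathbf{G}_m,\mathbf{K})$ via Claim \ref{proper}, then conclude from the weight incompatibility between that group (weight $\leq 0$, the paper states pure of weight $0$) and $H^1(X,\mathbf{K})$ (pure of weight $1$ by smoothness and properness of $X$), using Deligne's Hodge theory or Weil II depending on the base field. The extra care you take about strictness of weight filtrations and about tracking the Tate twist in $\mathbf{R}^1 f_!\mathbf{K}$ is sound but implicit in the paper, and your alternative via the Wang-type sequence is just a repackaging of the same Leray spectral sequence computation from Claim \ref{proper}.
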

\begin{proof}
Both sides carry weights and the map $\jmath_*$ is weight-preserving. This claim is proved by comparing them.
\begin{enumerate}
\item
The cohomology $H_c^1(U,\mathbf{K})=H_c^0(U/\mathbf{G}_m,\mathbf{K})$ is pure of weight $0$.

This is Deligne's Hodge III \cite[Th\'{e}or\`{e}me 8.2.4]{MR498552} (resp., Deligne's Weil II \cite[Corollaire 3.3.3]{MR601520}) if $k \subseteq \mathbf{C}$ (resp., $k \nsubseteq \mathbf{C}$).
\item
The cohomology $H^1(X,\mathbf{K})$ is pure of weight $1$. This is the only place we use the smoothness of $X$.

This is Deligne's Hodge II \cite[Corollaire 3.2.15]{MR498551} (resp., Deligne's Weil II \cite[Corollaire 3.3.6]{MR601520}) if $k \subseteq \mathbf{C}$ (resp., $k \nsubseteq \mathbf{C}$).
\end{enumerate}
Then the map $\jmath_*$ must vanish.
\end{proof}
\subsubsection{Normal case}
The only place where the smoothness assumption on $X$ is used is the purity of its first cohomology. To retain it in the singular case we replace the constant sheaf $\mathbf{K}$ by the intersection complex $\mathrm{IC}_X$ (with respect to the middle perversity) with coefficient $\mathbf{K}$. This replacement leads to two issues to be settled:
\begin{enumerate}
\item
the intersection cohomology coincides with the usual cohomology in degree $0$ and
\item
an analogy of Claim \ref{proper} holds.
\end{enumerate}
The first issue is solved in the following lemma, which will be used frequently in the computation of hypercohomology spectral sequences. This is also the major extra input in the singular case, as a price of replacing sheaves by complexes.
\begin{lem}\label{1918}
Let $V$ be an $n$-dimensional normal variety over a field $k$. Then $\mathscr{H}^{-n}(\mathrm{IC}_V)=\mathbf{K}$. In particular, there exists an exact triangle $\mathbf{K}[n] \to \mathrm{IC}_V \to \tau_{\geq 1}\mathrm{IC}_X \xrightarrow{+1}$ and $\mathrm{IH}_c^0(V)=H_c^0(V,\mathbf{K})$.
\end{lem}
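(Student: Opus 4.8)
For the plan, I would deduce everything from two facts about a normal variety $V$ of dimension $n$: its singular locus has codimension $\ge 2$, and $V$ is geometrically unibranch. Write $j\colon U\hookrightarrow V$ for the regular locus and $i\colon Z=V\setminus U\hookrightarrow V$ for its complement, so $\dim Z\le n-2$ by Serre's criterion. With the normalization $\mathrm{IC}_V|_U=\mathbf K_U[n]$, the recollement triangle $i_*i^!\mathrm{IC}_V\to\mathrm{IC}_V\to Rj_*(\mathbf K_U[n])\xrightarrow{+1}$ reduces the computation of $\mathscr H^{-n}(\mathrm{IC}_V)$ to that of $\mathscr H^{-n}$ of the two outer terms. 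On the right, the spectral sequence $R^pj_*\mathscr H^q\Rightarrow\mathscr H^{p+q}Rj_*$ collapses in the relevant corner and gives $\mathscr H^{-n}(Rj_*(\mathbf K_U[n]))=R^0j_*\mathbf K_U=j_*\mathbf K_U$, and since $V$ is geometrically unibranch its strict henselizations are irreducible, so each of them meets $U$ in a connected scheme and the adjunction map $\mathbf K_V\to j_*\mathbf K_U$ is an isomorphism (over $\mathbf C$ one argues identically using local irreducibility of a normal analytic space). So it remains to check that the left term contributes nothing in degrees $-n$ and $-n+1$.

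This is where the cosupport condition defining the intermediate extension enters: $i^!\mathrm{IC}_V\in{}^pD^{\ge 1}(Z)$. I would use the elementary fact that for the middle perversity one has ${}^pD^{\ge c}(Z)\subseteq D^{\ge c-\dim Z}(Z)$ — proved by Noetherian induction on $\dim Z$, using left $t$-exactness of $i'^!$ for the perverse $t$-structure along the singular locus together with the observation that on a smooth stratum the perverse $t$-structure is the standard one shifted by the dimension. Because $\dim Z\le n-2$ this yields $i^!\mathrm{IC}_V\in D^{\ge 3-n}(Z)$, hence $\mathscr H^k(i_*i^!\mathrm{IC}_V)=0$ for all $k\le 2-n$, in particular for $k=-n$ and $k=-n+1$. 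Feeding this into the triangle gives $\mathscr H^{-n}(\mathrm{IC}_V)\xrightarrow{\sim}j_*\mathbf K_U=\mathbf K_V$, i.e.\ $\mathscr H^{-n}(\mathrm{IC}_V)=\mathbf K$. (Equivalently, one can run Deligne's iterated construction of $\mathrm{IC}_V$ and observe that every truncation occurring is of the form $\tau_{\le -d-1}$ with $d\le n-2$, hence at a level $\ge -n+1$, so no truncation affects $\mathscr H^{-n}$, which is therefore the iterated $R^0$-pushforward $j_*\mathbf K_U$.)

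For the ``in particular'', since $\mathrm{IC}_V\in D^{\ge -n}$ with $\mathscr H^{-n}(\mathrm{IC}_V)=\mathbf K_V$, the canonical truncation triangle with respect to the standard $t$-structure identifies its $\tau_{\le -n}$-part with $\mathscr H^{-n}(\mathrm{IC}_V)[n]=\mathbf K[n]$; this is the asserted triangle $\mathbf K[n]\to\mathrm{IC}_V\to\tau_{\ge 1-n}\mathrm{IC}_V\xrightarrow{+1}$. Applying $\mathbf H^\bullet_c(V,-)$ and using that $\tau_{\ge 1-n}\mathrm{IC}_V\in D^{\ge 1-n}$ forces $\mathbf H^j_c(V,\tau_{\ge 1-n}\mathrm{IC}_V)=0$ for $j\le -n$, the long exact sequence collapses to an isomorphism $\mathrm{IH}^0_c(V)=\mathbf H^{-n}_c(V,\mathrm{IC}_V)\cong\mathbf H^{-n}_c(V,\mathbf K[n])=H^0_c(V,\mathbf K)$.

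The main obstacle — more precisely, the only genuinely normality-specific ingredient — is the identification $j_*\mathbf K_U=\mathbf K_V$, i.e.\ the geometrically unibranch input; once this and $\dim Z\le n-2$ are in hand, the remainder is a formal consequence of the support/cosupport ($t$-exactness) properties built into $\mathrm{IC}_V$.
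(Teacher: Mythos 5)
Your proof is correct and follows essentially the same route as the paper's: both rest on the codimension-$\geq 2$ consequence of normality, the identification $\mathscr{H}^{-n}(\mathrm{IC}_V)=j_*\mathbf{K}=\mathbf{K}$, and then the standard truncation triangle plus the hypercohomology spectral sequence for the ``in particular'' part. The difference is one of detail: where the paper compresses the identification $\mathscr{H}^{-n}(\mathrm{IC}_V)=j_*\mathbf{K}$ into a citation of the strong support condition, you actually derive it from the recollement triangle together with the cosupport constraint $i^!\mathrm{IC}_V\in{}^pD^{\geq 1}(Z)$ and the numerical bound ${}^pD^{\geq c}(Z)\subseteq D^{\geq c-\dim Z}(Z)$, and you also make explicit the geometrically-unibranch input behind $j_*\mathbf{K}_U=\mathbf{K}_V$; this is a genuine improvement in rigor. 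Incidentally, you correctly arrive at the triangle $\mathbf{K}[n]\to\mathrm{IC}_V\to\tau_{\geq 1-n}\mathrm{IC}_V\xrightarrow{+1}$, which is what is actually meant (the lemma's statement ``$\tau_{\geq 1}\mathrm{IC}_X$'' has a typo for ``$\tau_{\geq -n+1}\mathrm{IC}_V$'').
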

\begin{proof}
Let $j: V_{sm} \hookrightarrow V$ be the open immersion of the smooth locus. Since $V$ is normal, the complement of $V_{sm}$ in $V$ has codimension at least $2$. The strong support condition reads
\[
\mathscr{H}^{<-n}(\mathrm{IC}_V)=0 \text{ and } \mathscr{H}^{-n}(\mathrm{IC}_V)=j_*\mathbf{K}=\mathbf{K}.
\]
In particular, the standard exact triangle
\[
\tau_{\leq 0}(\mathrm{IC}_V[-n]) \to \mathrm{IC}_V[-n] \to \tau_{\geq 1}(\mathrm{IC}_V[-n]) \xrightarrow{+1}
\]
becomes $\mathbf{K} \to \mathrm{IC}_V[-n] \to (\tau_{\geq -n+1}\mathrm{IC}_V)[-n] \xrightarrow{+1}$, i.e.,
\[
\mathbf{K}[n] \to \mathrm{IC}_V \to \tau_{\geq -n+1}\mathrm{IC}_V \xrightarrow{+1}
\]
and
\[
\mathrm{IH}_c^0(V):=\mathbf{H}^{-n}_c(V,\mathrm{IC}_V)=H^0_c(V,\mathscr{H}^{-n}(\mathrm{IC}_V))=H^0_c(V,\mathbf{K})
\]
where the first equality follows from the hypercohomology spectral sequence 
\[
E_2^{p,q}:=H_c^p(V,\mathscr{H}^q(\mathrm{IC}_V)) \Rightarrow \mathbf{H}_c^{p+q}(V,\mathrm{IC}_V)
\]
and the fact $H_c^p(V,-) \neq 0$ only for $p \geq 0$ and $\mathscr{H}^{q}(\mathrm{IC}_V) \neq 0$ only for $q \geq -n$.
\end{proof}
Similarly, applying the hypercohomology functor $\mathbf{H}_c^{*-n}(X,-)$ to the exact triangle $\jmath_!\jmath^*\mathrm{IC}_X \to \mathrm{IC}_X \to \imath_*\imath^*\mathrm{IC}_X \xrightarrow{+1}$ gives rise to a long exact sequence
\[
\begin{tikzcd}
\mathrm{IH}_c^0(U) \ar[r] \ar[d,equal,"\text{Lem } \ref{1918}"'] & \mathrm{IH}^0(X) \ar[r] \ar[d,equal,"\text{Lem } \ref{1918}"'] & \mathbf{H}^{-n}(Z,\imath^*\mathrm{IC}_X) \ar[r] \ar[d,equal,"\text{Lem } \ref{1918}"',"\text{hypercohomology s.s}"] & \mathrm{IH}_c^1(U) \ar[r,"\jmath_*"] \ar[d,equal] & \mathrm{IH}^1(X) \ar[d,equal] \\
H_c^0(U,\mathbf{K}) \ar[r] \ar[d,equal] & H^0(X,\mathbf{K}) \ar[r] \ar[d,equal] & H^0(Z,\mathbf{K}) \ar[r] \ar[d,equal] & \mathrm{IH}_c^1(U) \ar[r] \ar[d,equal,"(*)"'] & \mathrm{IH}^1(X) \ar[d,equal] \\
0 \ar[r] & \mathbf{K} \ar[r] & H^0(Z,\mathbf{K}) \ar[r] & H^0_c(U/\mathbf{G}_{m},\mathbf{K}) \ar[r,"0","(**)"'] & \mathrm{IH}^1(X)
\end{tikzcd}
\]
and we claim it equals to the third row. Note that in this case we get $(**)$ for free by the nature of our replacement: $\mathrm{IH}_c^1(X)$ is pure of weight $1$, using Gabber's purity theorem \cite[Corollary 11.3.5]{MR4269941} (resp., \cite[Corollaire 5.3.2 and Th\'{e}or\`{e}me 5.4.1]{MR751966}) if $k \subseteq \mathbf{C}$ (resp., $k \nsubseteq \mathbf{C}$). Therefore it remains to prove $(*)$.
\begin{claim}
The composition $\lambda: U \xrightarrow{f} \mathscr{U} \xrightarrow{g} U/\mathbf{G}_m$ induces an isomorphism
\[
\lambda^*: H^0_c(U/\mathbf{G}_m,\mathbf{K}) \xrightarrow{\sim} \mathrm{IH}_c^1(U).
\]
\end{claim}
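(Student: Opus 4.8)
The plan is to reduce to the smooth-case Claim~\ref{proper} by means of Lemma~\ref{1918}. Since $U$ is an open subscheme of the normal variety $X$ it is itself an $n$-dimensional normal variety, so Lemma~\ref{1918} applies to $U$ and yields an exact triangle $\mathbf{K}[n] \to \mathrm{IC}_U \to \tau_{\geq 1-n}\mathrm{IC}_U \xrightarrow{+1}$. Applying $\mathbf{H}_c^\bullet(U,-)$, and recalling $\mathrm{IH}_c^1(U) = \mathbf{H}_c^{1-n}(U,\mathrm{IC}_U)$ and $\mathbf{H}_c^j(U,\mathbf{K}[n]) = H_c^{j+n}(U,\mathbf{K})$, the relevant segment of the long exact sequence reads
\[
\cdots \to \mathbf{H}_c^{-n}(U,\tau_{\geq 1-n}\mathrm{IC}_U) \to H_c^1(U,\mathbf{K}) \to \mathrm{IH}_c^1(U) \to \mathbf{H}_c^{1-n}(U,\tau_{\geq 1-n}\mathrm{IC}_U) \to \cdots.
\]
By the hypercohomology spectral sequence $E_2^{p,q} = H_c^p(U,\mathscr{H}^q(\tau_{\geq 1-n}\mathrm{IC}_U))$, which is concentrated in $p \geq 0$ and $q \geq 1-n$, the first displayed term vanishes and the last one equals $H_c^0(U,\mathscr{H}^{1-n}(\mathrm{IC}_U))$.

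So the whole claim reduces to the vanishing $H_c^0\bigl(U,\mathscr{H}^{1-n}(\mathrm{IC}_U)\bigr) = 0$, and this is the one point I expect to require an idea. I would prove the stronger assertion that $H_c^0(U,\mathscr{F}) = 0$ for \emph{every} $\mathbf{G}_m$-equivariant constructible sheaf $\mathscr{F}$ on $U$; this applies to $\mathscr{F} = \mathscr{H}^{1-n}(\mathrm{IC}_U)$, which acquires a canonical $\mathbf{G}_m$-equivariant structure since $\mathrm{IC}_U$ depends functorially on $U$ and truncation is equivariant. The argument: because $\mathbf{G}_m$ is connected it acts trivially on the finite-dimensional vector space $H_c^0(U,\mathscr{F})$; hence for any section $s$ in this space the closed subset $\mathrm{supp}(s) \subseteq U$ is $\mathbf{G}_m$-invariant, and being the support of a compactly supported section it is proper over $k$; but $U \subseteq X - X^0$ has no $\mathbf{G}_m$-fixed points, so Borel's fixed point theorem forces $\mathrm{supp}(s) = \emptyset$, i.e., $s = 0$.

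Granting this, the long exact sequence collapses to an isomorphism $H_c^1(U,\mathbf{K}) \xrightarrow{\sim} \mathrm{IH}_c^1(U)$ coming from the canonical triangle of Lemma~\ref{1918}. Composing with the isomorphism $\lambda^*\colon H_c^0(U/\mathbf{G}_m,\mathbf{K}) \xrightarrow{\sim} H_c^1(U,\mathbf{K})$ of Claim~\ref{proper} then gives the desired isomorphism $\lambda^*\colon H_c^0(U/\mathbf{G}_m,\mathbf{K}) \xrightarrow{\sim} \mathrm{IH}_c^1(U)$. Here one should note that the proof of Claim~\ref{proper} uses only that $f$ is the $\mathbf{G}_m$-torsor, that $g$ is the good moduli space whose geometric fibres are $\mathrm{B}I_x$ (valid here since $U$ has no fixed points, so the orbits are closed and the quotient is geometric with finite stabilizers), and relative Poincaré duality for the line bundle $\bar f$ — none of which requires $X$ to be smooth.

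Alternatively, and more in parallel with Claim~\ref{proper}, one can run the Leray spectral sequence for $\lambda = g\circ f$ directly with coefficients in $\mathrm{IC}_U$: since $f$ is smooth of relative dimension $1$ we have $f^*\mathrm{IC}_{[U/\mathbf{G}_m]}[1] \cong \mathrm{IC}_U$, so the projection formula gives $\mathbf{R}f_!\mathrm{IC}_U \cong \mathrm{IC}_{[U/\mathbf{G}_m]}[1] \otimes \mathbf{R}f_!\mathbf{K}$, whose lowest nonzero cohomology sheaf lies in degree $1-n$ and equals $\mathbf{K}$; pushing forward along the geometric quotient $g$ (which has no higher direct images with $\mathbf{Q}$- or $\mathbf{Q}_\ell$-coefficients, as its geometric fibres are $\mathrm{B}I_x$ with $I_x$ finite) shows $\mathscr{H}^q(\mathbf{R}\lambda_!\mathrm{IC}_U) = 0$ for $q<1-n$ and $\mathscr{H}^{1-n}(\mathbf{R}\lambda_!\mathrm{IC}_U) = \mathbf{K}_{U/\mathbf{G}_m}$. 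The bottom edge map of the resulting spectral sequence is then exactly the asserted isomorphism $\lambda^*$. Either way, the essential mechanism is the vanishing of $H_c^0$ for $\mathbf{G}_m$-equivariant sheaves on the fixed-point-free space $U$, which is what controls the low-degree contribution of the non-constant part of $\mathrm{IC}_U$.
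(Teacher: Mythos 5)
Your proposal is correct. Your second route (``Alternatively...'') is essentially the paper's proof: compute $\mathbf{R}f_!\mathrm{IC}_U$ via smoothness of the $\mathbf{G}_m$-torsor $f$ and the projection formula, compute $\mathbf{R}g_!$ via the finite-gerbe fibres, and read off the bottom edge map of the Leray spectral sequence for $\lambda$ with $\mathrm{IC}_U$ coefficients.

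Your first route is a genuinely different and rather clean reduction that the paper does not take. You use the canonical triangle of Lemma~\ref{1918} on $U$ itself to split off the constant-sheaf contribution, observe that the resulting long exact sequence pins $\mathrm{IH}^1_c(U)$ between $H^1_c(U,\mathbf{K})$ (already handled by Claim~\ref{proper}, whose proof indeed never uses smoothness of $X$) and $H^0_c\bigl(U,\mathscr{H}^{1-n}(\mathrm{IC}_U)\bigr)$, and then kill the latter. This reduces the normal case to a single vanishing statement instead of redoing the Leray computation with $\mathrm{IC}_U$ coefficients. What the paper's approach buys in exchange is that it avoids any appeal to triviality of the $\mathbf{G}_m$-action on cohomology or to Borel's fixed point theorem: it works entirely at the level of the complexes $\mathbf{R}^i\lambda_!\mathrm{IC}_U$.

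The one point in your first route that deserves more care is the sentence ``because $\mathbf{G}_m$ is connected it acts trivially on $H^0_c(U,\mathscr{F})$''. That statement is true (for a $\mathbf{G}_m$-equivariant constructible sheaf the action on cohomology is locally constant in the group parameter, hence constant on the connected group), but as written it reads as if it followed merely from the finite-dimensionality of the target, which it does not. A shorter way to get the vanishing you want, closer in spirit to the paper, is to compute $\mathbf{R}^0\lambda_!\mathscr{F}$ stalkwise: the fibres of $\lambda$ are $\mathbf{G}_m$-orbits, $\mathscr{F}$ is locally constant along each (by equivariance), and a locally constant sheaf on $\mathbf{G}_m$ or $\mathbf{G}_m/\mu_n$ has no nonzero compactly supported sections; hence $\mathbf{R}^0\lambda_!\mathscr{F}=0$ and so $H^0_c(U,\mathscr{F})=H^0_c(U/\mathbf{G}_m,\mathbf{R}^0\lambda_!\mathscr{F})=0$. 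This makes the support/Borel argument unnecessary and sidesteps the question of whether $\mathrm{supp}(s)$ is algebraic.
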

\begin{proof}
The Leray spectral sequence for $\lambda$ reads
\[
E_2^{p,q}:=H_c^p(U/\mathbf{G}_m,\mathbf{R}^q\lambda_!\mathrm{IC}_U) \Rightarrow \mathbf{H}_c^{p+q}(U,\mathrm{IC}_U).
\]
Since $\mathbf{R}^pg_! \circ \mathbf{R}^qf_! \Rightarrow \mathbf{R}^{p+q}\lambda_!$, we will compute $\mathbf{R}^i\lambda_!$ step-by-step.
\begin{enumerate}
\item
Firstly we have
\[
\mathbf{R}f_!\mathrm{IC}_U \cong (\mathrm{IC}_{\mathscr{U}} \otimes^{\mathbf{L}} \mathbf{R}f_!\mathbf{K})[1]
\]
Since $f$ is smooth, we could apply \cite[Lemma 6.1]{MR2480756} (or \cite[5.4.2 Theorem]{MR696691}) to obtain that
\[
f^!\mathrm{IC}_{\mathscr{U}} \cong \mathrm{IC}_U[-1]
\]
Applying $\mathbf{R}f_!$ and using projection formula we have
\[
\mathbf{R}f_!\mathrm{IC}_U[-1] \cong \mathbf{R}f_!(f^!\mathrm{IC}_{\mathscr{U}}) \cong \mathbf{R}f_!(f^!\mathrm{IC}_{\mathscr{U}} \otimes^{\mathbf{L}} \mathbf{K}) \cong \mathrm{IC}_{\mathscr{U}} \otimes^{\mathbf{L}} \mathbf{R}f_!\mathbf{K}
\]
i.e., $\mathbf{R}f_!\mathrm{IC}_U=(\mathrm{IC}_{\mathscr{U}} \otimes^{\mathbf{L}} \mathbf{R}f_!\mathbf{K})[1]$.
\item
Secondly we have
\begin{equation}\label{1150-5}
\mathbf{R}^ig_!\mathrm{IC}_{\mathscr{U}} \cong \begin{cases}
0 & \text{if } i<1-n \\
\mathbf{K} & \text{if } i=1-n
\end{cases}
\end{equation}
Indeed, for any point $\bar{x} \in U/\mathbf{G}_m$
\begin{align*}
(\mathbf{R}^ig_!\mathrm{IC}_{\mathscr{U}})_{\bar{x}}&=\mathbf{H}^{i}_c(\mathrm{B}I_x,\iota_x^*\mathrm{IC}_{\mathscr{U}})=H_c^0(\mathrm{B}I_x,\mathscr{H}^i(\iota_x^*\mathrm{IC}_{\mathscr{U}}))=H^0(\mathrm{B}I_x,\iota_x^*\mathscr{H}^i(\mathrm{IC}_{\mathscr{U}})) \\
& \cong \begin{cases}
0 & \text{if } i < 1-n \\
\mathbf{K} & \text{if } i=1-n
\end{cases}
\end{align*}
where $\iota_x: \mathrm{B}I_x \hookrightarrow \mathscr{U}$ is the residue gerbe of $\mathscr{U}$ at $x$. This shows that $\mathbf{R}^ig_!\mathrm{IC}_{\mathscr{U}}=0$ for $i < 1-n$. As for $i=1-n$, there exists a morphism $\mathbf{K}[n-1] \to \mathrm{IC}_{\mathscr{U}}$ (see the proof of Lemma \ref{1918}), using \eqref{sm2} we obtain a morphism $\mathbf{K} \to \mathbf{R}^{1-n}g_!\mathrm{IC}_{\mathscr{U}}$ inducing isomorphisms on stalks, witnessing $\mathbf{R}^{1-n}g_!\mathrm{IC}_{\mathscr{U}} \simeq \mathbf{K}$.
\end{enumerate}
Altogether, this shows
\[
\mathbf{R}\lambda_!\mathrm{IC}_U=\mathbf{R}g_! \circ \mathbf{R}f_!\mathrm{IC}_U=\mathbf{R}g_!(\mathrm{IC}_{\mathscr{U}} \otimes^{\mathbf{L}} \mathbf{R}f_!\mathbf{K})[1]=(\mathbf{R}g_!\mathrm{IC}_{\mathscr{U}} \otimes^{\mathbf{L}} \mathbf{R}\lambda_!\mathbf{K})[1]
\]
and hence for $i \leq 1-n$ we have (using \eqref{1150-4} and \eqref{1150-5})
\[
\mathbf{R}^i\lambda_!\mathrm{IC}_U=\mathbf{R}^{1-n}g_!\mathrm{IC}_{\mathscr{U}} \otimes \mathbf{R}^{i+n}\lambda_!\mathbf{K}=\begin{cases}
0 & \text{if } i<1-n \\
\mathbf{K} & \text{if } i=1-n
\end{cases}
\]
Then $\mathrm{IH}_c^1(U)=\mathbf{H}_c^{1-n}(U,\mathrm{IC}_U)=H^0_c(U/\mathbf{G}_m,\mathbf{R}^{1-n}\lambda_!\mathrm{IC}_U)=H^0_c(U/\mathbf{G}_m,\mathbf{K})$.
\end{proof}
\section{Proof of Main Theorem}\label{iff-part}
\subsection{Surjectivity}
In this subsection we describe $\mathbf{G}_m$-invariant open subsets with proper good quotients, using the characterizations established in \S\ref{Geom-char}.
\subsubsection{Exceptional case}
First we deal with the exceptional case, as identified before, that the open subset intersects the source or the sink.
\begin{prop}[\cite{MR755486}, Proposition 2.4 if $k=\mathbf{C}$]\label{prop:typei}
Let $X$ be a normal proper variety over a field $k$ with a $\mathbf{G}_m$-action. Let $U \subseteq X$ be a $\mathbf{G}_m$-invariant open subset with a proper good quotient. If $U$ intersects the source $X_{\min}$ (resp., the sink $X_{\max}$) of $X$, then $U=X_{\min}^-$ (resp., $U=X_{\max}^+$).
\end{prop}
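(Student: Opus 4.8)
The plan is to combine the $\Theta$-reductivity of $\mathscr{U}=[U/\mathbf{G}_m]$ with the absorption Lemma \ref{A^0}. By symmetry (replacing the $\mathbf{G}_m$-action by $t \mapsto t^{-1}$, which interchanges the source and the sink and interchanges $(\cdot)^-$ with $(\cdot)^+$), it suffices to treat the case $U \cap X_{\min} \neq \emptyset$. Since $U$ carries a proper good quotient, $\mathscr{U}$ has a proper good moduli space, so by Theorem \ref{thm0} it satisfies $(\Theta)$, $(\mathrm{S})$ and $(\mathrm{E})$; I will only use $(\Theta)$ and $(\mathrm{E})$. First, $(\mathrm{E})$ together with Lemma \ref{A^0} gives $X_{\min} \subseteq U$. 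Next I would upgrade this to $X_{\min}^- \subseteq U$: for any point $x \in X_{\min}^-$ the complete orbit map $\overline{\sigma}_x \colon \mathbf{P}^1 \to X$ satisfies $\overline{\sigma}_x(0)=x^- \in X_{\min} \subseteq U$, so $\overline{\sigma}_x^{-1}(U)$ is a $\mathbf{G}_m$-invariant open subset of $\mathbf{P}^1$ containing $0$ and hence contains all of $\mathbf{A}^1 = \mathbf{P}^1 \setminus \{\infty\}$; in particular $x = \overline{\sigma}_x(1) \in U$, and the whole minus flow-line $\mathbf{G}_m.x \cup \{x^-\}$ lies in $U$.

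With $X_{\min}^- \subseteq U$ in hand, the point is that $X_{\min}^-$ is at the same time open and closed in $U$. It is open because $X_{\min}^- \subseteq X$ is an open (dense) subscheme by Lemma \ref{source-sink} (this is where normality is used), hence open in $U$. For closedness I would invoke $\Theta$-reductivity through Lemma \ref{theta-red}: applied with $i = \min$ it says exactly that $\mathrm{ev}_1 \colon (U \cap X_{\min})^- \to U$ is a closed immersion. Since $X_{\min} \subseteq U$ we have $U \cap X_{\min} = X_{\min}$, and by the previous paragraph the entire minus flow-line of every $x \in X_{\min}^-$ lies in $U$; this identifies $(U \cap X_{\min})^-$ with $X_{\min}^-$ and shows that the set-theoretic image of $\mathrm{ev}_1$ is precisely $X_{\min}^- \subseteq U$. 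Being the image of a closed immersion, it is closed in $U$. As $U$ is irreducible (a nonempty open subset of the variety $X$) and $X_{\min}^-$ is nonempty, open and closed in $U$, we conclude $X_{\min}^- = U$ (first as sets, hence as open subschemes of $X$). The case of the sink is the mirror image.

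The step I expect to require the most care is the identification of the mapping scheme $(U \cap X_{\min})^-$ with the open subscheme $X_{\min}^- \subseteq X$, i.e. the bookkeeping relating $\underline{\mathrm{Map}}^{\mathbf{G}_m}_k(\mathbf{A}^{-1},U)$, its evaluations at $1$ and at $0$, and the cell $X_{\min}^-$; in particular one needs that $\mathrm{ev}_1 \colon X_{\min}^- \to X$ is an open immersion (a consequence of Sumihiro's theorem as in Lemma \ref{source-sink} and \cite{Drinfeld-Gm}) so that the final topological argument can be carried out at the level of schemes. As a consistency check, one can re-derive $U = X_{\min}^-$ without $(\Theta)$: by Proposition \ref{cover} every point of $U$ lies on a (necessarily smoothable) maximal chain of orbits, whose image meets $U$ in one of the forms of Theorem \ref{summary}; since such a chain starts at a point of $X_{\min} \subseteq U$, the intersection with $U$ contains a fixed point and hence must be of form (3) with limit in $X_{\min}$, which forces every point of $U$ into the $\mathbf{G}_m$-invariant set $X_{\min}^-$.
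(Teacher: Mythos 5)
Your proposal is correct and follows essentially the same route as the paper: Lemma \ref{A^0} (via condition (E)) to get $X_{\min} \subseteq U$, then $\Theta$-reductivity via Lemma \ref{theta-red} to show $\mathrm{ev}_1 : X_{\min}^- \to U$ is a closed immersion, combined with the open immersion from Lemma \ref{source-sink}, so that the clopen nonempty subset $X_{\min}^-$ of the irreducible $U$ is all of $U$. Your careful intermediate step showing $X_{\min}^- \subseteq U$ (so that one may legitimately replace the $(U\cap X_{\min})^-$ appearing in Lemma \ref{theta-red} by $X_{\min}^-$) is left tacit in the paper's one-line proof, and your alternative derivation via Theorem \ref{summary} is a valid, slightly different closing argument, but the overall strategy is the same.
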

\begin{proof}
The quotient stack $[U/\mathbf{G}_m]$ satisfies the conditions (E) and ($\Theta$) by Theorem \ref{thm0}. If $U \cap X_{\min} \neq \emptyset$ (resp., $U \cap X_{\max} \neq \emptyset$), then $X_{\min} \subseteq U$ (resp., $X_{\max} \subseteq U$) by Lemma \ref{A^0}. In this case the evaluation $\mathrm{ev}_1: X_{\min}^- \to U$ (resp., $\mathrm{ev}_1: X_{\max}^+ \to U$) is both an open (Lemma \ref{source-sink}) and a closed immersion (Lemma \ref{theta-red}), we are done.
\end{proof}
\subsubsection{General case}
If the open subset does not intersect the source or the sink, then again we find that its complement is disconnected.
\begin{prop}\label{2028-3}
Let $X$ be a normal proper variety over a field $k$ with a $\mathbf{G}_m$-action. Let $U \subseteq X$ be a $\mathbf{G}_m$-invariant open dense subset with a proper good quotient. The following are equivalent:
\begin{enumerate}
\item
The subset $U$ does not intersect the source $X_{\min}$ or the sink $X_{\max}$ of $X$.
\item
The complement $Z:=X-U$ has two connected components.
\end{enumerate}
In this case, $X_{\min}$ and $X_{\max}$ are in the different connected components of $Z$.
\end{prop}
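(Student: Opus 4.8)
For $(2)\Rightarrow(1)$ and the final assertion, nothing new is needed. Since $U$ carries a proper good quotient, the stack $[U/\mathbf{G}_m]$ is S-complete by Theorem~\ref{thm0}, so Corollary~\ref{1727} applies. If $Z$ has two connected components then $Z$ is disconnected, and Corollary~\ref{1727} places $X_{\min}$ and $X_{\max}$ in distinct components of $Z$; in particular $X_{\min},X_{\max}\subseteq Z$, so $U$ meets neither the source nor the sink, which is exactly $(1)$ together with the last sentence of the statement.

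For $(1)\Rightarrow(2)$ assume $U\cap X_{\min}=\emptyset=U\cap X_{\max}$, so $X_{\min},X_{\max}\subseteq Z\neq\emptyset$. By Corollary~\ref{1727} it suffices to show $Z$ is disconnected. One cannot quote Proposition~\ref{prop:2-connected} directly, because $U$ may contain (interior) fixed point components, in which case the good moduli space morphism $[U/\mathbf{G}_m]\to Y:=U/\mathbf{G}_m$ fails to be proper over their images and the weight argument of Proposition~\ref{prop:2-connected} no longer applies verbatim. Instead I would pass to a $\mathbf{G}_m$-equivariant modification: choose a normal proper variety $\widetilde X$ with a $\mathbf{G}_m$-action and a $\mathbf{G}_m$-equivariant proper birational morphism $\pi\colon\widetilde X\to X$, isomorphic over $U$, such that the rational map $X\dashrightarrow Y$ determined by $q\colon U\to Y$ extends to a (necessarily $\mathbf{G}_m$-invariant, $\mathbf{G}_m$ acting trivially on $Y$) morphism $\mu\colon\widetilde X\to Y$ which, after further equivariant blowing up, we may take to be flat; concretely one normalizes an equivariant flattening of a blow-up of the closure of the graph of $q$. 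Since $\pi$ is proper birational onto a normal variety it has connected fibres, whence $\pi_0(\widetilde Z)=\pi_0(Z)$ for $\widetilde Z:=\pi^{-1}(Z)=\widetilde X\smallsetminus U$, so it is enough to prove $\widetilde Z$ is disconnected. Note also that $\widetilde X_{\min},\widetilde X_{\max}\subseteq\widetilde Z$, because $\pi$ carries the source (resp.\ sink) of $\widetilde X$ into $X_{\min}$ (resp.\ $X_{\max}$) by Lemma~\ref{source-sink}, and these lie in $Z$.

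Now I would analyse the fibres of $\mu$. It is flat, proper and surjective, with generic fibre a single free orbit closure $\cong\mathbf{P}^1$; in general $\mu^{-1}(y)$ is a connected proper one-dimensional $\mathbf{G}_m$-scheme whose reduction $C_y$ is, by the analysis of degenerations of orbit closures underlying Definition~\ref{def:sm}, a chain of orbits in $\widetilde X$, which is smoothable (restrict the flat family $\mu$ over a curve in $Y$ through $y$ meeting the generic locus) and maximal (its source end specialises from that of the generic fibre, hence lies in $\widetilde X_{\min}$, and likewise its sink end lies in $\widetilde X_{\max}$). Moreover $C_y\cap U=q^{-1}(y)\neq\emptyset$ since $q$ is surjective. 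As $[U/\mathbf{G}_m]$ satisfies $(\mathrm{S})$ and $(\mathrm{E})$ and $U$ meets neither $\widetilde X_{\min}$ nor $\widetilde X_{\max}$, Theorem~\ref{summary} and Remark~\ref{rmk:excep} force $C_y\cap U$ to be of form $(1)$ (a single orbit) or $(2)$ (two consecutive orbits glued at a fixed point lying in $U$); this is an open subset of the linear chain $C_y$, so $C_y\cap\widetilde Z$ has exactly two connected components, one containing $C_y\cap\widetilde X_{\min}$ and the other $C_y\cap\widetilde X_{\max}$. Taking the union over $y\in Y$ of the source-side pieces (resp.\ sink-side pieces) produces $\mathbf{G}_m$-invariant subsets $\widetilde Z^{-},\widetilde Z^{+}$ with $\widetilde Z=\widetilde Z^{-}\sqcup\widetilde Z^{+}$, $\widetilde X_{\min}\subseteq\widetilde Z^{-}$, $\widetilde X_{\max}\subseteq\widetilde Z^{+}$; in particular both are non-empty and disjoint. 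If $\widetilde Z^{\pm}$ are closed then $\widetilde Z$ is disconnected, hence so is $Z$, and Corollary~\ref{1727} promotes this to: $Z$ has exactly two connected components, with $X_{\min}$ and $X_{\max}$ in different ones.

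The main obstacle is precisely the closedness of $\widetilde Z^{-}$ (equivalently $\widetilde Z^{+}$): one must see that the fibrewise two-component picture is globally coherent, i.e.\ that the ``source side'' and ``sink side'' of the fibres of $\mu$ never merge under degeneration. I expect this to come from the properness of $\mu|_{\widetilde Z}\colon\widetilde Z\to Y$ combined with the monotonicity intrinsic to chains of orbits — each $\mathbf{P}^1$ in a chain has negative weight, so a chain never returns to a fixed component it has left — so that as $y\to y_0$ the source end of $C_y$, pinned to the closed subvariety $\widetilde X_{\min}$, cannot land in the sink-side component of $C_{y_0}\cap\widetilde Z$. A clean way to package this is through the Stein factorization of $\mu|_{\widetilde Z}$: the section over the locus $Y^{\circ}$ of single-$\mathbf{P}^1$ fibres given by inverting the birational morphism $\mu|_{\widetilde X_{\min}}\colon\widetilde X_{\min}\to Y$ splits the resulting degree-two cover generically, and one checks, again via non-looping of chains, that the splitting extends over all of $Y$. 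One should also note in passing that the (possibly non-reduced) scheme structure on $\mu^{-1}(y)$ is harmless here, as it affects neither $\pi_0(\widetilde Z)$ nor the chain structure of the reduction.
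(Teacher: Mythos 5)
Your $(2)\Rightarrow(1)$ direction and the deduction of the final clause from Corollary~\ref{1727} are correct and match the paper. You have also correctly diagnosed the obstacle in $(1)\Rightarrow(2)$: Proposition~\ref{prop:2-connected} is stated only for fixed-point-free $U$, so it cannot be quoted directly when $U$ contains interior fixed point components.

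Where the argument breaks down is in the workaround. The closedness of $\widetilde Z^{-}$ (equivalently $\widetilde Z^{+}$) is the crux of your construction, and you explicitly flag it as something you ``expect'' rather than prove; the appeal to ``non-looping of chains'' and a Stein factorization argument is a heuristic, not a demonstration. Beyond that, several preparatory steps are not justified: the existence of a $\mathbf{G}_m$-equivariant flattening $\mu\colon\widetilde X\to Y$ that remains normal, proper, birational over $U$ and flat simultaneously (Raynaud--Gruson flattening is not automatically equivariant, and normalization can break flatness); the assertion that the reduced fibre $C_y$ of $\mu$ is a chain of orbits (a flattened limit of a $\mathbf{P}^1$-family need not a priori have this specific combinatorial form without further argument); and the smoothability of $C_y$ via ``restricting over a curve in $Y$'' (which requires an equivariant curve-lifting argument). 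In short, the proposal replaces one gap by several, and the central one is left as a hope.

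The paper's route is far shorter and sidesteps all of this. One removes from $U$ the attracting sets of its own interior fixed point components, setting $U^\circ:=U-\bigsqcup_i (U\cap X_i)^-$. This is open in $U$ precisely because $\Theta$-reductivity of $[U/\mathbf{G}_m]$ (Lemma~\ref{theta-red}) makes each $(U\cap X_i)^-$ closed in $U$, and $U^\circ$ is by construction fixed-point-free. One then checks, directly from Theorem~\ref{summary} and the hypothesis that $U$ misses source and sink, that for every smoothable maximal chain of orbits the intersection with $U^\circ$ is a single free orbit; hence $[U^\circ/\mathbf{G}_m]$ satisfies $(\Theta)$, $(\mathrm S)$, $(\mathrm E)$ and thus has a proper good quotient by Theorem~\ref{thm0}. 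Proposition~\ref{prop:2-connected} now applies to $U^\circ$, so $X-U^\circ$ has two components separating $X_{\min}$ from $X_{\max}$, and since $Z\subseteq X-U^\circ$ also contains both, $Z$ is disconnected; Corollary~\ref{1727} then pins down that it has exactly two components. I would abandon the birational-modification machinery and adopt this internal shrinking of $U$ instead: it uses only what is already proved in the paper and avoids both the equivariant-flattening subtleties and the unresolved closedness claim.
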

\begin{proof}
Our arguments are similar to those in \cite[Theorem 2.11]{MR755486}. 

The implication $(2) \Rightarrow (1)$. If $U \cap X_{\min} \neq \emptyset$ (resp., $U \cap X_{\max} \neq \emptyset$), then $U=X_{\min}^-$ (resp., $U=X_{\max}^+$) by Proposition \ref{prop:typei}. In this case $Z$ is connected since by Theorem \ref{summary} every point in $Z$ can be connected to $X_{\max}$ (resp., $X_{\min}$) via any smoothable maximal chain of orbits in $X$ passing through it.

The implication $(1) \Rightarrow (2)$. Consider the $\mathbf{G}_m$-invariant fixed-point-free subset
\[
U^\circ:=U-\bigsqcup_{i \in \pi_0(X^0)} (U \cap X_i)^- \subseteq U.
\]
Since the quotient stack $[U/\mathbf{G}_m]$ is $\Theta$-reductive, the subset $U^\circ \subseteq U$ is open by Lemma \ref{theta-red}. To conclude it suffices to show the quotient stack $[U^\circ/\mathbf{G}_m]$ admits a proper good moduli space. Indeed, if $[U^\circ/\mathbf{G}_m]$ admits a proper good moduli space, by Proposition \ref{prop:2-connected} the complement $X-U^\circ$ has two connected components, one containing $X_{\min}$ and the other containing $X_{\max}$. Then $Z$ is disconnected as $X_{\min}$ and $X_{\max}$ are still in the different connected components. We are done by Corollary \ref{1727}.

To show the quotient stack $[U^\circ/\mathbf{G}_m]$ admits a proper good moduli space we apply Theorem \ref{thm0}. It is $\Theta$-reductive by Lemma \ref{theta-red} since it has no fixed point. For any smoothable maximal chain of orbits $f_{\mathbf{K}}: C_{\mathbf{K}} \to X$ in $X$, we have 
\[
f_{\mathbf{K}}(C_{\mathbf{K}}) \cap U^\circ \subseteq f_{\mathbf{K}}(C_{\mathbf{K}}) \cap U.
\]
By Theorem \ref{summary} there are two cases: If $f_{\mathbf{K}}(C_{\mathbf{K}}) \cap U=\mathbf{G}_m.x$ for some $x \in X-X^0$, then $f_{\mathbf{K}}(C_{\mathbf{K}}) \cap U^\circ=\mathbf{G}_m.x$. If $f_{\mathbf{K}}(C_{\mathbf{K}}) \cap U=\mathbf{G}_m.x_1 \cup \{x_1^+=x_2^-\} \cup \mathbf{G}_m.x_2$ for some $x_1,x_2 \in X-X^0$, then $f_{\mathbf{K}}(C_{\mathbf{K}}) \cap U^\circ=\mathbf{G}_m.x_1$. This shows that $[U^\circ/\mathbf{G}_m]$ satisfies the conditions (S) and (E) by Theorem \ref{summary}.
\end{proof}
Hereafter, let $U \subseteq X$ be a $\mathbf{G}_m$-invariant open dense subset with a proper good quotient and denote by $Z:=X-U$ its complement. Suppose $U$ does not intersect the source or the sink of $X$. Let $\Omega_1 \subseteq Z$ (resp., $\Omega_2 \subseteq Z$) be the connected component containing $X_{\min}$ (resp., $X_{\max}$). By Remark \ref{new-2} we have
\[
U \subseteq \bigsqcup_{U \cap X_i^- \cap X_j^+ \neq \emptyset} X_i^- \cap X_j^+ \subseteq \bigsqcup_{i \in \Delta^- \cup \Delta^0 \atop j \in \Delta^0 \cup \Delta^+} X_i^- \cap X_j^+,
\]
where 
\[
\Delta^\pm:=\{i: U \cap (X_i^\pm-X_i) \neq \emptyset\} \text{ and } \Delta^0:=\{i: U \cap X_i \neq \emptyset\}.
\]
By Theorem \ref{summary} fixed point components in $\Delta^-$ (resp., $\Delta^+$) can connect to $X_{\min}$ (resp., $X_{\max}$) inside $Z$, i.e., they are in the same connected component of $Z$ as $X_{\min}$ (resp., $X_{\max}$). Let us define
\begin{align*}
\Delta^- \subseteq A^-:&=\{i: X_i \subseteq \Omega_1\}=\{i: \Omega_1 \cap X_i \neq \emptyset\}, \\
A^0:&=\Delta^0=\{i: X_i \subseteq U\}=\{i: U \cap X_i \neq \emptyset\}, \\
\Delta^+ \subseteq A^+:&=\{i: X_i \subseteq \Omega_2\}=\{i: \Omega_2 \cap X_i \neq \emptyset\},
\end{align*}
where the equality hold by Lemma \ref{A^0}. The triple $(A^-,A^0,A^+)$ is a non-trivial division of $\pi_0(X^0)$. Moreover
\[
U \subseteq \bigsqcup_{i \in A^- \cup A^0 \atop j \in A^0 \cup A^+} X_i^- \cap X_j^+.
\]
Surprisingly, this inclusion turns out to be an equality so the open subset $U$ can be recovered from this triple $(A^-,A^0,A^+)$.
\begin{thm}[\cite{MR755486}, Theorem 2.11 if $k=\mathbf{C}$]
Let $X$ be a normal proper variety over a field $k$ with a $\mathbf{G}_m$-action. Let $U \subseteq X$ be a $\mathbf{G}_m$-invariant open subset with a proper good quotient. If $U$ does not intersect the source or the sink, then
\[
U=\bigsqcup_{i \in A^- \cup A^0 \atop j \in A^0 \cup A^+} X^-_i \cap X_j^+.
\]
\end{thm}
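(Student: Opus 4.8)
Since the inclusion $\subseteq$ is already in place, the plan is to establish the reverse inclusion: $X_i^-\cap X_j^+\subseteq U$ for every $i\in A^-\cup A^0$ and $j\in A^0\cup A^+$ (call such a pair $(i,j)$ \emph{admissible}). Two standing facts feed into the argument. First, by hypothesis and Proposition \ref{2028-3} the closed complement $Z=X-U$ is a disjoint union of exactly two connected components, $\Omega_1\ni X_{\min}$ and $\Omega_2\ni X_{\max}$; being connected components of the closed subset $Z\subseteq X$, both $\Omega_1$ and $\Omega_2$ are closed in $X$, and they are $\mathbf{G}_m$-invariant because $\mathbf{G}_m$ is connected. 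Second, the triple $(A^-,A^0,A^+)$ is a partition of $\pi_0(X^0)$ into \emph{pairwise disjoint} sets: each fixed component $X_\ell$ is connected — in fact irreducible (Remark \ref{0948-4}) — so by Lemma \ref{A^0} it lies entirely in exactly one of $U$, $\Omega_1$, $\Omega_2$. (Recall that, $X$ being proper, $x^-$ and $x^+$ are defined for every point $x\in X$.)

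The key step would be the flow-stability of $\Omega_1$ and $\Omega_2$: if $x\in\Omega_1$ then $x^-,x^+\in\Omega_1$. Indeed, $\Omega_1$ is $\mathbf{G}_m$-invariant and closed, so it contains the orbit closure $\overline{\mathbf{G}_m\cdot x}$, which is the image of the complete orbit map $\overline{\sigma}_x\colon\mathbf{P}^1\to X$ and therefore contains $x^-=\overline{\sigma}_x(0)$ and $x^+=\overline{\sigma}_x(\infty)$; the same holds for $\Omega_2$. From this I would deduce the four implications
\[
X_i^-\cap\Omega_1\neq\emptyset\Rightarrow i\in A^-,\qquad X_j^+\cap\Omega_1\neq\emptyset\Rightarrow j\in A^-,\qquad X_i^-\cap\Omega_2\neq\emptyset\Rightarrow i\in A^+,\qquad X_j^+\cap\Omega_2\neq\emptyset\Rightarrow j\in A^+.
\]
For instance, a point $x\in X_i^-\cap\Omega_1$ has $x^-\in X_i$ (by membership in $X_i^-$) and $x^-\in\Omega_1$ (by flow-stability), so $X_i$ meets $Z$, whence $X_i\subseteq Z$ by Lemma \ref{A^0}; as $X_i$ is connected and meets the component $\Omega_1$ of $Z=\Omega_1\sqcup\Omega_2$, we get $X_i\subseteq\Omega_1$, i.e.\ $i\in A^-$. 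The remaining three implications are proved identically.

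The conclusion is then immediate. If some admissible pair $(i,j)$ had $X_i^-\cap X_j^+\not\subseteq U$, choose $x\in X_i^-\cap X_j^+\cap Z$; then $x\in\Omega_1$ or $x\in\Omega_2$. In the former case the implications give $i\in A^-$ and $j\in A^-$, contradicting $j\in A^0\cup A^+$ and disjointness of the partition; in the latter case they give $i,j\in A^+$, contradicting $i\in A^-\cup A^0$. Hence $X_i^-\cap X_j^+\subseteq U$ for every admissible pair. Finally, each point $x$ determines $x^-$ and $x^+$, hence the unique pair $(i,j)$ with $x\in X_i^-\cap X_j^+$, so the union over admissible pairs is disjoint; combined with the inclusion $\subseteq$ already established, this gives the claimed identity.

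I do not anticipate a serious obstacle: once Proposition \ref{2028-3} (two components of $Z$) and Lemma \ref{A^0} are available, the proof is essentially formal. The points to handle with care are the flow-stability of $\Omega_1,\Omega_2$ and the use of the fact that $Z$ has \emph{exactly} two components, so that a connected subset of $Z$ meeting $\Omega_1$ is contained in $\Omega_1$. Alternatively, staying within the chain-of-orbits framework of the previous sections, one could choose for each $x\in X_i^-\cap X_j^+$ a smoothable maximal chain of orbits through $x$ (Proposition \ref{cover}, Corollary \ref{(1,r)-in-U}) and read off $f_{\mathbf{K}}(C_{\mathbf{K}})\cap U$ from Theorem \ref{summary} according to where $X_i$ and $X_j$ sit relative to $\Omega_1$ and $\Omega_2$; but the direct topological argument above is shorter.
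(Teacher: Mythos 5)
Your argument for the reverse inclusion is essentially the same as the paper's: both use that $\Omega_1,\Omega_2$ are closed $\mathbf{G}_m$-invariant subsets, so the complete orbit map of any $x\in\Omega_1$ (resp.\ $\Omega_2$) stays in $\Omega_1$ (resp.\ $\Omega_2$), forcing $j\in A^-$ (resp.\ $i\in A^+$) and yielding a contradiction with the partition. The only cosmetic difference is that you observe the forward inclusion $U\subseteq\bigsqcup X_i^-\cap X_j^+$ is already recorded in the text preceding the theorem, whereas the paper's proof re-derives it via Theorem \ref{summary}; the substance is the same.
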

\begin{proof}
For any point $x \in U$, say $x \in X_i^- \cap X_j^+$ for some $i,j \in \pi_0(X^0)$, we show $i \in A^- \cup A^0 \text{ and } j \in A^0 \cup A^+$. For this we choose a smoothable maximal chain of orbits $f_{\mathbf{K}}: C_{\mathbf{K}} \to X$ in $X$ passing through $x$. If $j \in A^-$ (resp., $i \in A^+$), then by Theorem \ref{summary} we have
\[
\mathbf{G}_m.x \cup \{x^+\} \subseteq U \cap f_{\mathbf{K}}(C_{\mathbf{K}}) \text{ (resp., } \{x^-\} \cup \mathbf{G}_m.x \subseteq U \cap f_{\mathbf{K}}(C_{\mathbf{K}})).
\]
In particular $U \cap X_j \neq \emptyset$ (resp., $U \cap X_i \neq \emptyset$), i.e., $j \in A^0$ (resp., $i \in A^0$), a contradiction since $(A^-,A^0,A^+)$ is a division of $\pi_0(X^0)$.

Conversely, for any point $x \in X_i^- \cap X_j^+$ with $i \in A^- \cup A^0$ and $j \in A^0 \cup A^+$, we show $x \in U$. If $x \in \Omega_1$ (resp., $x \in \Omega_2$), then its complete orbit map $\overline{\sigma}_x$ locates in $\Omega_1$ (resp., $\Omega_2$) as $\Omega_1 \subseteq X$ (resp., $\Omega_2 \subseteq X$) is closed. In particular $x^+ \in X_j \cap \Omega_1$ (resp., $x^- \in X_i \cap \Omega_2$), i.e., $j \in A^-$ (resp., $i \in A^+$), a contradiction.
\end{proof}
The triple $(A^-,A^0,A^+)$ extracted from $U$ is thus of great importance and we derive some of its properties. Corollary \ref{hhh} tells that $X_j <_d X_i$ for some $i \in A^0$ implies $j \in A^-$, which can be generalized as follows since $A^\pm$ can be seen as the saturation of $\Delta^\pm$ with respect to the relation $<$ (resp., $>$).
\begin{lem}
If $i \in A^- \cup A^0$ and  $X_j<X_i$, then $j \in A^-$.
\end{lem}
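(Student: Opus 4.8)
The plan is to reduce at once to the case of a \emph{direct} inequality $X_j<_dX_i$, and then play the explicit shape of $U$ obtained above against the topology of its complement $Z=\Omega_1\sqcup\Omega_2$.

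\emph{Reduction.} If $X_j<X_i$, choose a chain $X_j=X_{l_0}<_dX_{l_1}<_d\cdots<_dX_{l_r}=X_i$ with $r\ge 1$. Granting the case $r=1$, the last link $X_{l_{r-1}}<_dX_i$ (here $i\in A^-\cup A^0$) gives $l_{r-1}\in A^-$; since $A^-\subseteq A^-\cup A^0$ we may replace $X_i$ by $X_{l_{r-1}}$ and induct on $r$. So from now on $X_j<_dX_i$: there is a point $x\in X$, which (as $X_i\ne X_j$) lies outside $X^0$, with $x^-\in X_j$ and $x^+\in X_i$.

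\emph{The case $X_j<_dX_i$.} Recall $X=U\sqcup\Omega_1\sqcup\Omega_2$ (Proposition \ref{2028-3}), that $\Omega_1,\Omega_2$ are closed and $\mathbf{G}_m$-invariant, that $A^-,A^0,A^+$ are the corresponding disjoint blocks of $\pi_0(X^0)$, and that each $X_a$ is irreducible (Remark \ref{0948-4}). Since $U$ has a proper good quotient, $[U/\mathbf{G}_m]$ satisfies $(\mathrm{S})$, $(\mathrm{E})$, $(\Theta)$ by Theorem \ref{thm0}, so Lemma \ref{A^0} and Corollary \ref{hhh} are available. Because $i\in A^-\cup A^0$ we have $X_i\subseteq U\cup\Omega_1$, hence $x^+\notin\Omega_2$; as $\overline{\mathbf{G}_m.x}$ is irreducible while $\Omega_2$ is closed and $\mathbf{G}_m$-invariant, $x\in\Omega_2$ would force $\overline{\mathbf{G}_m.x}\subseteq\Omega_2$ and so $x^+\in\Omega_2$, which is absurd. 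Thus $x\in\Omega_1$ or $x\in U$. If $x\in\Omega_1$, then $\overline{\mathbf{G}_m.x}\subseteq\Omega_1$, so $x^-\in X_j\cap\Omega_1\subseteq Z$; by Lemma \ref{A^0} this forces $X_j\subseteq Z$, and being irreducible (hence connected) $X_j$ then lies in the single component $\Omega_1$, i.e. $j\in A^-$. If instead $x\in U$, then $\mathbf{G}_m.x\subseteq U$ and $x\in X_j^-\cap X_i^+$, so the description $U=\bigsqcup_{a\in A^-\cup A^0,\ b\in A^0\cup A^+}X_a^-\cap X_b^+$ proved just above forces $j\in A^-\cup A^0$ and $i\in A^0\cup A^+$; together with $i\in A^-\cup A^0$ and the disjointness of the three blocks this yields $i\in A^0$, whence $x^+\in X_i\subseteq U$. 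Were $j\in A^0$ too, we would also get $x^-\in X_j\subseteq U$, so that $\overline{\mathbf{G}_m.x}=\mathbf{G}_m.x\cup\{x^-,x^+\}\subseteq U$ would be a complete orbit contained in $U$, contradicting Corollary \ref{hhh}. Hence $j\in A^-$, as desired.

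\emph{Expected obstacle.} There is no real difficulty here: the only genuine use of the hypotheses is the last step, where ruling out $j\in A^0$ needs simultaneously the precise form of $U$ and the ``no complete orbit in $U$'' property of Corollary \ref{hhh}; everything else is bookkeeping with the partition $X=U\sqcup\Omega_1\sqcup\Omega_2$. The only points to be a little careful about are the (routine) facts that the point $x$ witnessing $X_j<_dX_i$ may be taken outside $X^0$, and that $X_j$, being irreducible, cannot meet both $U$ and $Z$ (this is exactly Lemma \ref{A^0}).
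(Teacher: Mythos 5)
Your argument is correct, and it is a genuinely different route from the paper's. The paper also reduces to $X_j<_dX_i$ by induction, but then chooses a smoothable maximal chain of orbits through $X_j^-\cap X_i^+$ and invokes Theorem \ref{summary} on the shape of $U\cap f_\mathbf{K}(C_\mathbf{K})$: the key observation there is that $U$ cannot meet the chain before $X_i$ (else $X_i$ would sit in the $X_{\max}$-component of $Z$, contradicting $i\in A^-\cup A^0$), so $X_j$ meets $Z$ and is connected to $X_{\min}$ along the chain. You instead sidestep the chain entirely and work with the tripartition $X=U\sqcup\Omega_1\sqcup\Omega_2$ together with the explicit formula $U=\bigsqcup X_a^-\cap X_b^+$ proved just before: a witness $x$ of $X_j<_dX_i$ has $x^+\in X_i\subseteq U\cup\Omega_1$, which rules out $x\in\Omega_2$; $x\in\Omega_1$ gives $j\in A^-$ by closedness and Lemma \ref{A^0}; and $x\in U$ pins $i\in A^0$ via the formula, after which $j\in A^0$ is excluded by Corollary \ref{hhh} (here one could even cite its first sentence directly rather than the ``no complete orbit'' consequence). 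What each buys: the paper's proof stays in the chain-of-orbits framework used throughout Section 3, while yours is more elementary bookkeeping given what has just been established, at the cost of depending on the full strength of the equality $U=X(A^*)$ rather than only the inclusion. Both are sound.
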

\begin{proof}
By induction we may assume that $X_j <_d X_i$. Choose a smoothable maximal chain of orbits $f_{\mathbf{K}}: C_{\mathbf{K}} \to X$ in $X$ passing through $X_j^- \cap X_i^+$, then $U$ cannot intersect $f_{\mathbf{K}}(C_{\mathbf{K}})$ before $X_i$ (along the chain $f_{\mathbf{K}}$), otherwise $X_i$ would be in the same connected component of $Z$ as $X_{\max}$ by Theorem \ref{summary}, i.e., $i \in A^+$, a contradiction. This implies that $X_j \cap Z \neq \emptyset$ and hence $X_j$ is in the same connected component of $Z$ as $X_{\min}$, i.e., $j \in A^-$.
\end{proof}
These properties make up the notion of semi-sections. 
\begin{defn}[\cite{MR704983}, Definition 1.2 and 1.3]\label{def:ss}
Let $X$ be a variety over a field $k$ with a $\mathbf{G}_m$-action. A \emph{semi-section} on $X$ is a non-trivial division of $\pi_0(X^0)$ into three subsets $(A^-,A^0,A^+)$, satisfying one of the following equivalent conditions:
\begin{itemize}
\item
If $i \in A^- \cup A^0$ and $X_j<X_i$, then $j \in A^-$.
\item
If $i \in A^0 \cup A^+$ and $X_i<X_j$, then $j \in A^+$. 
\end{itemize}
It is a \emph{section} on $X$ if $A^0=\emptyset$. If $(A^-,A^0,A^+)$ is a semi-section on $X$, then
\[
X(A^\ast):=\bigsqcup_{i \in A^- \cup A^0 \atop j \in A^0 \cup A^+} X^-_i \cap X_j^+ \subseteq X
\]
is the \emph{semi-sectional subset} defined by the semi-section $(A^-,A^0,A^+)$.
\end{defn}
\begin{rmk}\label{f0}
Let $(A^-,A^0,A^+)$ be a semi-section on $X$. By definition
\begin{enumerate}
\item 
No fixed point components in $A^0$ are comparable.
\item 
It follows that $i \in A^0$ if and only if $X(A^*) \cap X_i \neq \emptyset$, then $X_i \subseteq X(A^*)$.
\item 
If $X_i<X_j$ and $X_j<X_i$ for some $i \neq j$, then $i,j \in A^-$ or $i,j \in A^+$.
\item 
If $X$ is proper, then $\min \in A^- \cup A^0$ and $\max \in A^0 \cup A^+$ by Corollary \ref{well-order}. In particular any semi-sectional subset is dense since it contains the dense subset $X_{\min}^- \cap X_{\max}^+$ (see Lemma \ref{source-sink}). So by (3) the first necessary condition for the existence of semi-sections on $X$ is $X_{\max} \nless X_{\min}$. This holds, e.g. if $X$ is normal (see Proposition \ref{q'-cycle} later).
\end{enumerate}
\end{rmk}
This definition also covers the exceptional case Proposition \ref{prop:typei} as $X_{\min}^-$ and $X_{\max}^+$ are semi-sectional subsets corresponding to the semi-sections
\[
(\emptyset,\{\min\},\pi_0(X^0)-\{\min\}) \text{ and } (\pi_0(X^0)-\{\max\},\{\max\},\emptyset) \text{ respectively.}
\]
Indeed, this amounts to saying that there is no fixed point component lying outside the source or the sink, which is guaranteed by the following result.
\begin{prop}[\cite{MR709583}, Corollary A.3 if $k=\mathbf{C}$]\label{q'-cycle}
Let $X$ be a normal proper variety over a field $k$ with a $\mathbf{G}_m$-action. Then $X_i^+=X_i$ (resp., $X_i^-=X_i$) if and only if $i=\min$ (resp., $i=\max$).
\end{prop}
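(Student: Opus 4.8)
The plan is to prove the two equivalences simultaneously, exploiting the inversion symmetry $t\mapsto t^{-1}$: this interchanges $X^{+}$ with $X^{-}$ and, by Lemma~\ref{source-sink}, the source $X_{\min}$ with the sink $X_{\max}$, so it suffices to show that $X_i^{+}=X_i$ if and only if $i=\min$.

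For the implication ``$i\neq\min\Rightarrow X_i^{+}\supsetneq X_i$'' I would argue using only properness. By Corollary~\ref{well-order} we have $X_{\min}<X_i$, so there is a chain $X_{\min}=X_{j_0}<_d\cdots<_d X_{j_m}=X_i$ with $m\ge 1$; since its endpoints differ, choosing the largest index $k$ with $j_k\neq i$ yields a step $X_{j_k}<_d X_i$ with $j_k\neq i$, hence a point $x$ with $x^{-}\in X_{j_k}$ and $x^{+}\in X_i$. As $X_{j_k}$ and $X_i$ are distinct connected components of $X^{0}$, such an $x$ is not $\mathbf{G}_m$-fixed, so $x\in X_i^{+}\setminus X_i$.

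For the converse, which is where I expect normality to enter, I would prove directly that $x^{+}\in X_{\min}$ forces $x\in X_{\min}$. Put $p:=x^{+}$ and choose an affine open $W'\subseteq X_{\min}$ containing $p$. Since $X$ is normal, $X_{\min}^{-}\subseteq X$ is open (Lemma~\ref{source-sink}) and $\ell:=\mathrm{ev}_0\colon X_{\min}^{-}\to X_{\min}$, $w\mapsto w^{-}$, is affine (\cite[Theorem~1.4.2]{Drinfeld-Gm}); then $W:=\ell^{-1}(W')$ is a $\mathbf{G}_m$-invariant affine open of $X$ containing $p$ (invariant since $\ell$ is constant on orbits, affine since it is affine over the affine $W'$, open since $X_{\min}^{-}$ is). Because $\ell$ is the identity on $X_{\min}$ one gets $W'=\ell^{-1}(W')\cap X_{\min}\subseteq W$, hence every $w\in W$ satisfies $w^{-}=\ell(w)\in W'\subseteq W$, i.e. $W^{-}=W$. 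Writing $W=\mathrm{Spec}(A)$ with weight decomposition $A=\bigoplus_n A_n$, the equality $W^{-}=W$ forces $A$ to be graded in a single sign of degrees, so $W^{0}=\mathrm{Spec}(A_0)$, and a short computation with the grading shows that a point of $W$ whose limit $w^{+}$ again lies in $W$ must lie in $W^{0}$; moreover $W^{0}=W\cap X^{0}\subseteq X_{\min}$, since a fixed point of $W$ equals its own limit in $W'$. Now $t\cdot x\to x^{+}=p\in W$ as $t\to\infty$ and $W$ is open, so some translate $w_0=t_0\cdot x$ lies in $W$; then $w_0^{+}=x^{+}=p\in W$ forces $w_0\in W^{0}\subseteq X_{\min}$, so $w_0$ is $\mathbf{G}_m$-fixed and $x=t_0^{-1}\cdot w_0=w_0\in X_{\min}$. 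This gives $X_{\min}^{+}=X_{\min}$, and by the inversion symmetry $X_{\max}^{-}=X_{\max}$.

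I expect the crux to be this converse — concretely, the construction near an arbitrary point of the source of a $\mathbf{G}_m$-invariant affine chart on which the action contracts everything onto the fixed locus (equivalently, whose coordinate ring is graded in a single sign). The device $W=\ell^{-1}(W')$ packages this cleanly, but it hinges on $X_{\min}^{-}$ being \emph{open} in $X$, which is exactly the input provided by normality (via Sumihiro's theorem) and which fails for general proper $X$; the remaining steps are routine bookkeeping with the weight grading. As a byproduct the same argument shows that $X_{\min}=X_{\max}$ can only occur when $X=X^{0}$, and it recovers the inequality $X_{\max}\nless X_{\min}$ promised in Remark~\ref{f0}.
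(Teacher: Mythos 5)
Your proof is correct, and the converse direction (the crux, as you rightly identify) takes a genuinely different route from the paper's. Both arguments begin the same way: the inversion $t\mapsto t^{-1}$ reduces to the plus case, and $i\neq\min$ forces $X_i\subsetneq X_i^{+}$ via Corollary~\ref{well-order}. For $X_{\min}^{+}=X_{\min}$, however, the paper chooses a $\mathbf{G}_m$-invariant affine neighbourhood $U$ of $x^{+}$ by Sumihiro's theorem, shows $U^{-}=U$ by the ``open and closed in irreducible'' device (openness of $X_{\min}^{-}$ plus \cite[Prop.~1.4.11~(iv)]{Drinfeld-Gm}), and then argues \emph{infinitesimally}: by \cite[Prop.~1.4.11~(vi)]{Drinfeld-Gm} the equality $U^{-}=U$ forces $\mathbf{T}_{x^{+}}(X)$ to have only non-negative weights, while the nonzero $\mathbf{T}_{x^{+}}(U^{+})$ would contribute strictly negative weight, a contradiction. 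You instead manufacture the invariant affine chart directly as $W=\mathrm{ev}_0^{-1}(W')$ --- exploiting that $\mathrm{ev}_0\colon X_{\min}^{-}\to X_{\min}$ is affine and $X_{\min}^{-}$ is open --- so that $W^{-}=W$ holds by construction rather than by an ``open-closed'' argument, and you then work \emph{globally} with the single-sign grading on $\mathcal{O}(W)$ to get $W^{+}=W^{0}\subseteq X_{\min}$ set-theoretically, finishing by translating $x$ into $W$. This avoids the tangent-space bookkeeping entirely (in particular the slightly delicate point of why $\mathbf{T}_{x^{+}}(U^{+})\neq 0$) and makes the role of normality more transparent: it enters only through the openness of $X_{\min}^{-}$, and Sumihiro is used only indirectly through Lemma~\ref{source-sink} rather than invoked again to pick the chart. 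Both approaches are sound; yours is arguably the cleaner one for this particular statement.

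One small polish worth making if you write this up: when you pick $w_0=t_0\cdot x\in W$, the cofinite open $\overline{\sigma}_x^{-1}(W)\subseteq\mathbf{P}^1_{\mathbf{K}}$ need not contain a $\mathbf{K}$-rational point of $\mathbf{G}_m$ when $\mathbf{K}$ is finite, so one should allow $t_0$ to live over a finite extension $\mathbf{K}'/\mathbf{K}$; the conclusion $x=w_0\in X_{\min}$ still follows since $x$ and $w_0$ define the same point of $|X|$.
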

\begin{rmk}
This gives a geometric characterization of the source (i.e., there is no orbit flowing in) and the sink (i.e., there is no orbit flowing out). If $X$ is smooth, this claim follows directly from a dimension computation on tangent spaces as in \cite[Proposition 1.4.11 (vi)]{Drinfeld-Gm}.
\end{rmk}
\begin{proof}
We only prove the plus case. If $i \neq \min$, then $X_{\min}<X_i$ by Corollary \ref{well-order} and hence $X_i \subsetneq X_i^+$. It remains to show $X_{\min}^+ \subseteq X_{\min}$. If there exists a point $x \in X_{\min}^+-X_{\min}$, we choose a $\mathbf{G}_m$-invariant affine neighbourhood $U \subseteq X$ of $x^+ \in X_{\min}$. Then the closed immersion (\cite[Proposition 1.4.11 (iv)]{Drinfeld-Gm}) $\mathrm{ev}_1: U^- \to U$ is also open as $\mathrm{ev}_1: (U \cap X_{\min})^- \to X$ is, so $U^-=U$. By the computation in \cite[Proposition 1.4.11 (vi)]{Drinfeld-Gm}
\begin{enumerate}
\item
the morphism $\mathrm{ev}_1: U^- \to U$ induces an isomorphism
\[
\mathbf{T}_{x^+}(X)=\mathbf{T}_{x^+}(U)=\mathbf{T}_{x^+}(U^-) \xrightarrow{\sim} \mathbf{T}_{x^+}(U)^+=\mathbf{T}_{x^+}(X)^+,
\]
i.e., the tangent space $\mathbf{T}_{x^+}(X)$ has only non-negative weights part.
\item
the morphism $\mathrm{ev}_1: U^+ \to U$ induces an isomorphism
\[
0 \neq \mathbf{T}_{x^+}(U^+) \xrightarrow{\sim} \mathbf{T}_{x^+}(U)^-=\mathbf{T}_{x^+}(X)^-,
\]
contributing to the negative weights part of $\mathbf{T}_{x^+}(X)$ as $x \notin X_{\min}$.
\end{enumerate}
a contradiction.
\end{proof}
In summary we are able to reformulate our conclusions uniformly.
\begin{thm}[\cite{MR709583}, Theorem 1.4 and \cite{MR755486}, Theorem 2.11 if $k=\mathbf{C}$]\label{onlyif}
Let $X$ be a normal proper variety over a field $k$ with a $\mathbf{G}_m$-action. If $U \subseteq X$ is a $\mathbf{G}_m$-invariant open dense subset with a proper good quotient, then $U=X(A^\ast)$ for some semi-section $(A^-,A^0,A^+)$ on $X$.
\end{thm}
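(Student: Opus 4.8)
The plan is to deduce the statement from the structural results of \S\ref{Geom-char}, distinguishing the two situations for $U$ that were isolated there. We may assume the $\mathbf{G}_m$-action is non-trivial, since otherwise $X^0 = X$ is a single irreducible component, $\pi_0(X^0)$ admits no non-trivial division, and there is nothing to prove; then Proposition \ref{q'-cycle} shows $\min \neq \max$, so $\pi_0(X^0)$ has at least two elements, which is all we shall need.

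First I would treat the exceptional case, in which $U$ meets the source $X_{\min}$ (the sink case being symmetric). Here Proposition \ref{prop:typei} already gives $U = X_{\min}^-$, so it remains to realize $X_{\min}^-$ as a semi-sectional subset. The candidate is the division $(A^-,A^0,A^+) := (\emptyset,\{\min\},\pi_0(X^0)-\{\min\})$. Since $X$ is proper, $X = \bigsqcup_j X_j^+$, hence $X_{\min}^- = \bigsqcup_j (X_{\min}^- \cap X_j^+)$, which is precisely $X(A^\ast)$ for this division; and the division is non-trivial because $\pi_0(X^0) \neq \{\min\}$. That it is a genuine semi-section reduces to checking that no $j$ satisfies $X_j < X_{\min}$: such a $j$ would yield $X_\ell <_d X_{\min}$ for some $\ell$, i.e. $X_\ell^- \cap X_{\min}^+ \neq \emptyset$, forcing $X_{\min}^+ \supsetneq X_{\min}$, which contradicts Proposition \ref{q'-cycle}.

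Next, the general case, where $U$ meets neither $X_{\min}$ nor $X_{\max}$. Proposition \ref{2028-3} then gives that $Z := X - U$ has exactly two connected components, $\Omega_1 \ni X_{\min}$ and $\Omega_2 \ni X_{\max}$. Because $X$ is normal, each fixed point component $X_i$ is irreducible (Remark \ref{0948-4}), hence connected, hence contained in exactly one of $U$, $\Omega_1$, $\Omega_2$; I would accordingly put $A^- := \{i : X_i \subseteq \Omega_1\}$, $A^0 := \{i : X_i \subseteq U\}$, $A^+ := \{i : X_i \subseteq \Omega_2\}$. This is a division of $\pi_0(X^0)$, non-trivial because $\min \in A^-$ and $\max \in A^+$, and it is a semi-section by the Lemma established just before Definition \ref{def:ss} (if $i \in A^- \cup A^0$ and $X_j < X_i$, then $j \in A^-$). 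Finally, the structure theorem established just before Definition \ref{def:ss} identifies $U$ with $\bigsqcup_{i \in A^- \cup A^0,\, j \in A^0 \cup A^+} X_i^- \cap X_j^+$, which is $X(A^\ast)$; this closes the argument.

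I do not expect a real obstacle: the hard content --- the geometric criterion of Theorem \ref{summary}, the topological dichotomy of Proposition \ref{2028-3}, and the reconstruction of $U$ from its triple $(A^-,A^0,A^+)$ --- is already available, and the theorem is just their uniform packaging. The only points that need care are the bookkeeping in the exceptional case, where recognizing $X_{\min}^-$ and $X_{\max}^+$ as semi-sectional subsets genuinely invokes Proposition \ref{q'-cycle} to exclude fixed components lying ``below'' the source or ``above'' the sink, and the opening reduction ruling out the degenerate situation where $\pi_0(X^0)$ is too small to admit any non-trivial division.
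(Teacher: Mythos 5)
Your proposal is correct and follows essentially the same route as the paper: split into the exceptional case ($U$ meets $X_{\min}$ or $X_{\max}$, handled by Proposition~\ref{prop:typei} plus Proposition~\ref{q'-cycle} to recognize $X_{\min}^-$, $X_{\max}^+$ as semi-sectional) and the general case (Proposition~\ref{2028-3}, the unnamed reconstruction theorem, and the lemma preceding Definition~\ref{def:ss}). The only wrinkle is your opening reduction: when the action is trivial there is in fact \emph{something} to prove (the open $U = X$ does satisfy the hypothesis), but the division $(\emptyset,\{\ast\},\emptyset)$ is a valid semi-section with $X(A^\ast) = X$, so the conclusion still holds; your deduction that $\min\neq\max$ for a non-trivial action is sound, though the argument passes through Lemma~\ref{source-sink} as well as Proposition~\ref{q'-cycle}.
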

As a consequence, there are at most $2+3^{|\pi_0(X^0)|-2}$ $\mathbf{G}_m$-invariant open subsets of $X$ with a proper good quotient. The bound is sharp due to the examples of the standard $\mathbf{G}_m$-action on $\mathbf{P}^1$ or \cite[\S 3]{MR755486}.
\subsection{Injectivity}
In this subsection, we show that semi-sectional subsets are $\mathbf{G}_m$-invariant open dense and admit proper good quotients. This gives the correspondence in \textbf{Main Theorem} and then we show it is injective.
\begin{prop}[\cite{MR704983}, Proposition 2.4 if $k=\bar{k}$]
Let $X$ be a proper variety over a field $k$ with a $\mathbf{G}_m$-action. The semi-sectional subsets of $X$ are $\mathbf{G}_m$-invariant and open dense.
\end{prop}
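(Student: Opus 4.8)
The plan is to prove the three assertions — $\mathbf{G}_m$-invariance, density, and openness of $W:=X(A^\ast)=\bigsqcup_{i\in A^-\cup A^0,\,j\in A^0\cup A^+}X_i^-\cap X_j^+$ — separately, where $(A^-,A^0,A^+)$ is the given semi-section. Invariance and density are essentially formal, and openness is the real content.

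For $\mathbf{G}_m$-invariance I would observe that for a point $x$ with complete orbit map $\overline{\sigma}_x\colon\mathbf{P}^1\to X$ and any $t\in\mathbf{G}_m$, the complete orbit map of $t.x$ is $\overline{\sigma}_x$ precomposed with the automorphism of $\mathbf{P}^1$ induced by $t$, which fixes $0$ and $\infty$; hence $(t.x)^{\pm}=x^{\pm}$, so every $X_i^-$ and every $X_j^+$ is $\mathbf{G}_m$-invariant, and therefore so is $W$. For density, Remark \ref{f0}(4) gives $\min\in A^-\cup A^0$ and $\max\in A^0\cup A^+$, so $(\min,\max)$ is an admissible pair and $X_{\min}^-\cap X_{\max}^+\subseteq W$; since $X$ is proper its generic point $\eta$ lies in $X_{\min}^-\cap X_{\max}^+$ (see the proof of Lemma \ref{source-sink}), whence $W$ is dense.

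For openness the plan is to describe the complement explicitly. Since $X$ is proper we have $X=X^{\pm}$ (see the proof of Lemma \ref{source-sink}); composing the identifications $X\cong X^{\mp}$ with the affine morphism $\mathrm{ev}_0\colon X^{\pm}\to X^{0}$ yields morphisms $q^{\pm}\colon X\to X^{0}$, $q^{\pm}(x)=x^{\pm}$, with $X_i^-=(q^-)^{-1}(X_i)$ and $X_j^+=(q^+)^{-1}(X_j)$; in particular $X=\bigsqcup_i X_i^-=\bigsqcup_j X_j^+$, so each $x\in X$ lies in $X_i^-\cap X_j^+$ for a unique pair $(i,j)$. Then $x\notin W$ exactly when $i\notin A^-\cup A^0$ or $j\notin A^0\cup A^+$, i.e.\ (as $(A^-,A^0,A^+)$ partitions $\pi_0(X^0)$) when $i\in A^+$ or $j\in A^-$, so
\[
X\setminus W=\Big(\bigcup_{i\in A^+}X_i^-\Big)\cup\Big(\bigcup_{j\in A^-}X_j^+\Big)=(q^-)^{-1}\!\Big(\bigsqcup_{i\in A^+}X_i\Big)\cup(q^+)^{-1}\!\Big(\bigsqcup_{j\in A^-}X_j\Big).
\]
Since $\bigsqcup_{i\in A^+}X_i$ and $\bigsqcup_{j\in A^-}X_j$ are unions of connected components of the Noetherian scheme $X^{0}$, hence closed in $X^{0}$, both preimages are closed in $X$, so $X\setminus W$ is closed and $W$ is open.

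I expect the only delicate point to be the bookkeeping behind the displayed description of $X\setminus W$ — namely that $X$ is the disjoint union of the cells $X_i^-$ and also of the cells $X_j^+$, so that membership of $x$ in $W$ is governed by the single pair $(i(x),j(x))$ — together with the fact that $x\mapsto x^{\pm}$ defines a morphism to $X^{0}$ (which is already built into the setup: $X=X^{\pm}$ for proper $X$, and $\mathrm{ev}_0$ is a morphism). Note that, in contrast to the classical treatment, no closure relations among the Bia\l{}ynicki-Birula cells are needed, only that each cell is a fiber of $q^{\pm}$ over a connected component of $X^{0}$; for the topological conclusion one may even work with $X=X^{\pm}$ only up to homeomorphism.
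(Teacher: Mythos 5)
Your treatment of $\mathbf{G}_m$-invariance and density is fine (and more explicit than the paper's, which simply appeals to Remark \ref{f0}(4)). The openness argument, however, has a genuine gap, and it is exactly at the step you flag as delicate.

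The problem is the claim that there is a \emph{morphism} $q^\pm\colon X\to X^0$, $x\mapsto x^\pm$. When $X$ is proper the monomorphism $\mathrm{ev}_1\colon X^\pm\to X$ is a bijection on points, but it is \emph{not} an isomorphism, nor even a homeomorphism, so one cannot transport $\mathrm{ev}_0\colon X^\pm\to X^0$ across it. The standard example already shows this: for $\mathbf{P}^1$ with the usual $\mathbf{G}_m$-action, $(\mathbf{P}^1)^-\cong \mathbf{A}^1\sqcup\{\infty\}$ as a scheme, so $\mathrm{ev}_1$ is a bijective monomorphism that is not a homeomorphism, and the set-theoretic map $x\mapsto x^-$ sending $\mathbf{A}^1\to\{0\}$ and $\infty\mapsto\infty$ is visibly not continuous (its fiber $\mathbf{A}^1$ over $\{0\}$ is not closed). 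Consequently $(q^-)^{-1}(\bigsqcup_{i\in A^+}X_i)$ need not be closed, and your description of $X\setminus W$ as a union of two closed sets breaks down. A second red flag: your argument never uses the semi-section condition, only that $(A^-,A^0,A^+)$ is a partition; if it were correct it would prove that \emph{every} union of BB cells $\bigsqcup_{i\in\Delta}X_i^-$ is clopen, which is false (again $\mathbf{A}^1\subseteq\mathbf{P}^1$).

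The paper's route is different in exactly the way that fixes this: it writes $X(A^\ast)$ as the intersection $\bigl(\bigsqcup_{i\in A^-\cup A^0}X_i^-\bigr)\cap\bigl(\bigsqcup_{j\in A^0\cup A^+}X_j^+\bigr)$ and invokes Lemma \ref{1544}, which says $\bigsqcup_{i\in\Delta}X_i^-$ is closed iff $\Delta$ is saturated for $>$. The semi-section condition is precisely the statement that $A^+$ is saturated for $>$ (and $A^-$ for $<$), so both pieces are open and their intersection is open. Lemma \ref{1544} in turn rests on closure relations among BB cells: if $x\in\overline{X_\ell^-}$ then $x\in X_j^-$ with $X_\ell<X_j$ — the very input you were hoping to avoid. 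It seems unavoidable here.
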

\begin{proof}
For any semi-section $(A^-,A^0,A^+)$ on $X$, the semi-sectional subset $X(A^\ast) \subseteq X$ is $\mathbf{G}_m$-invariant and dense by Remark \ref{f0} (4). To show openness, note that
\[
X(A^\ast)= \bigsqcup_{i \in A^- \cup A^0} X^-_i \quad \cap \bigsqcup_{j \in A^0 \cup A^+} X_j^+
\]
is an intersection of two open subsets of $X$ by the following lemma.
\end{proof}
\begin{lem}\label{1544}
Let $X$ be a proper variety over a field $k$ with a $\mathbf{G}_m$-action. For any subset $\Delta \subseteq \pi_0(X^0)$, we have
\[
\bigsqcup_{i \in \Delta} X_i^- \subseteq X \text{ is closed if and only if } \Delta \text{ is saturated with respect to } >.
\]
Similar result holds for $+$ case.
\end{lem}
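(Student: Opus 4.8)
The plan is to reduce the lemma to a frontier property of the minus-decomposition and then to argue combinatorially. Since $X$ is proper we have the disjoint decomposition $X=\bigsqcup_{i \in \pi_0(X^0)}X_i^-$ (Lemma~\ref{source-sink}), and the property in question is: for every $i$,
\[
\overline{X_i^-}\ \subseteq\ \bigcup_{j\ :\ j=i\ \text{or}\ X_i<X_j}X_j^-.
\]
Recall that $\Delta$ being saturated with respect to $>$ means precisely that $i\in\Delta$ and $X_i<X_j$ imply $j\in\Delta$. Granting the frontier property, the ``if'' direction is immediate: for such a $\Delta$ and each $i\in\Delta$ one gets $\overline{X_i^-}\subseteq\bigsqcup_{j\in\Delta}X_j^-$, whence $\bigsqcup_{i\in\Delta}X_i^-=\bigcup_{i\in\Delta}\overline{X_i^-}$ is a finite union of closed sets, hence closed.

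The ``only if'' direction does not use the frontier property. Suppose $C:=\bigsqcup_{i\in\Delta}X_i^-$ is closed but $\Delta$ is not saturated; walking along a chain $X_i<_d\cdots<_d X_j$ realizing $X_i<X_j$ we may arrange that $i\in\Delta$, $j\notin\Delta$ and $X_i<_d X_j$, and choose a point $x\in X_i^-\cap X_j^+$ over some field $\mathbf{K}/k$. The minus-limit is constant along a $\mathbf{G}_m$-orbit, so the complete orbit map $\overline{\sigma}_x\colon\mathbf{P}^1_{\mathbf{K}}\to X$ carries $\mathbf{G}_{m,\mathbf{K}}$ into $X_i^-$, and hence $x^+=\overline{\sigma}_x(\infty)\in\overline{X_i^-}\subseteq C$. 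But $x^+\in X_j$ is a fixed point, so $x^+\in X_j^-$, and the pairwise disjointness of the cells forces $j\in\Delta$, a contradiction.

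The real content is the frontier property, and this is where I expect the work to lie. Given $y\in\overline{X_i^-}$ lying in a cell $X_{j_0}^-$ with $j_0\neq i$, I would write $X_i^-$ as a finite union of irreducible locally closed subsets, pick one, say $W$, with $y\in\overline{W}$, and choose a DVR $R$ (with fraction field $K$ and residue field $\kappa$) with a morphism $\gamma\colon\mathrm{Spec}(R)\to X$ sending the generic point to the generic point of $W$ (so that its minus-limit lies in $X_i$) and the closed point to $y$. Gluing the complete orbit map of $\gamma|_K$ to the orbit map $\sigma_\gamma\colon\mathbf{G}_{m,R}\to X$ along $\mathbf{G}_{m,K}$ gives a rational map $\mathbf{P}^1_R\dashrightarrow X$ regular away from the two $\mathbf{G}_m$-fixed closed points of the special fibre; resolving its indeterminacy by a sequence of $\mathbf{G}_m$-equivariant blow-ups at fixed points, exactly as in the proof of Proposition~\ref{cover}, yields $\widetilde{g}\colon Y\to X$ whose restriction to the tree of projective lines forming the special fibre of $Y$ refines to a chain of orbits. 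The proper transform $L\cong\mathbf{P}^1_\kappa$ of the special fibre of $\mathbf{P}^1_R$ carries the complete orbit map of $y$, so $y^-$ appears among the fixed points of this chain; on the other hand, inspecting the point where the proper transform of the $0$-section meets the exceptional locus and using that $X_i$ is closed in $X$, the source of the chain lies in $X_i$ and strictly precedes $y^-$ along the chain. Reading off the induced direct relations along the chain then gives $X_i<X_{j_0}$, which is the frontier property.

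The main obstacle will be making this last step precise: the weight bookkeeping along the exceptional chain is the computation already carried out in the proof of Proposition~\ref{cover}, but one has to pin down exactly which fixed points the chain passes through and check that nothing is affected by the base field failing to be algebraically closed. Finally, the ``$+$'' case follows from the ``$-$'' case by replacing the $\mathbf{G}_m$-action with its inverse, which interchanges $X_i^-$ with $X_i^+$ and $<$ with $>$.
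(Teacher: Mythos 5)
Your ``only if'' direction matches the paper's in substance: you reduce to a directly comparable pair, note that $x^+\in\overline{X_i^-}\subseteq C$, and use disjointness of the cells $X_j^-$ to conclude. The ``if'' direction is where you diverge. You reduce it, as the paper does, to a frontier property of the minus-decomposition, but you then propose to prove the frontier property by running the full machinery of Proposition~\ref{cover}: choose a DVR carrying a generic point of (a component of) $X_i^-$ to $y$, glue the complete orbit map on the generic fibre to the orbit map on $\mathbf{G}_{m,R}$, resolve by $\mathbf{G}_m$-equivariant blow-ups, and read off a chain of $<_d$ relations from the source of the exceptional chain to $y^-$.

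The paper's proof of the frontier property is much shorter and avoids all of this: it observes that $\overline{X_\ell^-}$ is itself a proper $\mathbf{G}_m$-variety whose source is contained in $X_\ell$ (precisely because $X_\ell^-$ is dense in it and Lemma~\ref{source-sink} characterizes the source that way), and then simply invokes Corollary~\ref{well-order} \emph{applied to the proper variety} $\overline{X_\ell^-}$ to get $X_\ell< X_j$ where $x^-\in X_j$. All of the chain-of-orbits bookkeeping you flag as delicate is already encapsulated in Corollary~\ref{well-order}, which the paper has available at this point; your route re-derives it by hand in a special case. Your route can be made to work --- the key bookkeeping points you flag (the proper transform of the $0$-section meets the special fibre at $0_{\mathrm{first}}$, the closedness of $X_i$ forces $0_{\mathrm{first}}$ into $X_i$, and $y^-$ lies strictly later on the chain because it lands in a different fixed-point component) are indeed correct --- but it is substantially longer, and it also implicitly needs that the special fibre of the resolution is a chain (not a general tree), a fact that is buried in the weight computation of Proposition~\ref{cover}. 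I would recommend replacing the DVR argument with the one-line application of Corollary~\ref{well-order} to $\overline{X_\ell^-}$, which is the paper's trick, and keeping your otherwise clean ``only if'' argument.
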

\begin{proof}
This can be seen as a slight generalization of \cite[Lemma 1.3.1]{MR709583}.
\begin{enumerate}
\item
\textsc{If Part}: For any point $x \in \overline{\bigsqcup_{i \in \Delta} X_i^-}=\bigsqcup_{i \in \Delta} \overline{X_i^-} \subseteq X$, say $x \in \overline{X_\ell^-}$ for some $\ell \in \Delta$, we show $x \in X_j^-$ for some $j \in \Delta$. Passing to an irreducible component of $X_\ell^-$ whose closure contains $x$, we may assume that $X_\ell^-$ is irreducible. Consider $x^- \in \overline{X_\ell^-} \cap X^0=\overline{X_\ell^-}^0$. If $\Omega$ is the connected component of $\overline{X_\ell^-}^0$ containing $x^-$ and assume $\Omega \subseteq X_j$ for some $j \in \pi_0(X^0)$, then $x \in X_j^-$. Since $X_\ell^-$ is dense in $\overline{X_\ell^-}$, we have that $X_\ell$ is the source of $\overline{X_\ell^-}$ by Lemma \ref{source-sink} and hence $X_\ell < X_j$ by Corollary \ref{well-order}. Then $j \in \Delta$ as $\Delta$ is saturated with respect to $>$.
\item
\textsc{Only If Part}: We show $j \in \Delta$ for any $X_j> X_\ell$ with $\ell \in \Delta$. We may assume that $X_j>_d X_\ell$. In this case $X_\ell^- \cap X_j^+ \neq \emptyset$ and hence $X_j \cap \overline{X_\ell^-} \neq \emptyset$. The closedness of $\bigsqcup_{i \in \Delta} X_i^- \subseteq X$ implies that
\[
X_j \cap \bigsqcup_{i \in \Delta} X_i^-=X_j \cap \overline{\bigsqcup_{i \in \Delta} X_i^-}=X_j \cap \bigsqcup_{i \in \Delta} \overline{X_i^-}=\bigsqcup_{i \in \Delta} X_j \cap \overline{X_i^-} \neq \emptyset,
\]
i.e., $X_j \cap X_s^- \neq \emptyset$ for some $s \in \Delta$. This implies that $j=s \in \Delta$.
\end{enumerate}
\end{proof}

\begin{thm}[\cite{MR709583}, Theorem 1.3 and \cite{MR755486}, Theorem 2.8 if $k=\mathbf{C}$]\label{prop:if}
Let $X$ be a normal proper variety over a field $k$ with a $\mathbf{G}_m$-action. For any semi-section $(A^-,A^0,A^+)$ on $X$, the semi-sectional subset $X(A^\ast) \subseteq X$ is a $\mathbf{G}_m$-invariant open dense subset with a proper good quotient.
\end{thm}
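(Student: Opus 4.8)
The plan is to apply Theorem~\ref{thm0} to the quotient stack $\mathscr{U}:=[X(A^\ast)/\mathbf{G}_m]$, which is locally reductive, of finite presentation and with affine diagonal over $k$ by Sumihiro's theorem. Since $X(A^\ast)\subseteq X$ is already known to be $\mathbf{G}_m$-invariant open dense (shown above), it remains to check the three conditions $(\Theta)$, $(\mathrm{S})$ and $(\mathrm{E})$; then $\mathscr{U}$ admits a proper good moduli space, equivalently $X(A^\ast)$ has a proper good quotient.

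For $(\Theta)$ I would invoke Lemma~\ref{theta-red} and verify that $\mathrm{ev}_1\colon (X(A^\ast)\cap X_i)^\pm\to X(A^\ast)$ is a closed immersion for every $i\in\pi_0(X^0)$. If $i\notin A^0$ then $X(A^\ast)\cap X_i=\emptyset$ by Remark~\ref{f0}~(2); so assume $i\in A^0$, whence $X_i\subseteq X(A^\ast)$. As $\mathrm{ev}_1$ is a monomorphism that restricts to a closed immersion over any $\mathbf{G}_m$-invariant affine open of $X$ (such opens cover $X$ by Sumihiro's theorem; see \cite[Proposition~1.4.11~(iv)]{Drinfeld-Gm}), it suffices to show $X_i^-\cap X(A^\ast)$ is closed in $X(A^\ast)$, and symmetrically for $X_i^+$. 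So take $x\in\overline{X_i^-}\cap X(A^\ast)$, say $x\in X_s^-\cap X_t^+$ with $s\in A^-\cup A^0$ and $t\in A^0\cup A^+$. Arguing exactly as in the \textsc{If} part of Lemma~\ref{1544} — pass to a $\mathbf{G}_m$-invariant irreducible component of $X_i^-$ whose closure contains $x$ and apply Corollary~\ref{well-order} to this proper subvariety — one obtains $X_i\le X_s$. If $X_i\neq X_s$ then $X_i<X_s$, and since $i\in A^0$ the defining property of a semi-section (Definition~\ref{def:ss}) forces $s\in A^+$, contradicting $s\in A^-\cup A^0$. Hence $x\in X_i^-$, as wanted.

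For $(\mathrm{S})$ and $(\mathrm{E})$ I would use Theorem~\ref{summary}: it suffices to check that for every smoothable maximal chain of orbits $f_{\mathbf{K}}\colon C_{\mathbf{K}}\to X$, the intersection $X(A^\ast)\cap f_{\mathbf{K}}(C_{\mathbf{K}})$ has one of the three forms listed there. Write the fixed point components met by the chain as $X_{\min}=X_{c_0},X_{c_1},\dots,X_{c_m}=X_{\max}$ with $X_{c_{s-1}}<_d X_{c_s}$, and let $O_s=\mathbf{G}_m.y_s\subseteq X_{c_{s-1}}^-\cap X_{c_s}^+$ be the orbit joining the $(s-1)$-st and $s$-th components, so the internal nodes of $C_{\mathbf{K}}$ are the fixed points $y_s^+=y_{s+1}^-\in X_{c_s}$. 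The crux is the following combinatorial observation: along the chain the indices $s$ with $c_s\in A^-$ form an initial segment, those with $c_s\in A^+$ form a final segment, and at most one index $s$ has $c_s\in A^0$. Indeed, from $X_{c_{s-1}}<_d X_{c_s}$ and either equivalent form of the condition in Definition~\ref{def:ss}, membership in $A^-$ propagates to the left and membership in $A^+$ propagates to the right; and since by Remark~\ref{f0}~(1) no two members of $A^0$ are comparable — and the semi-section property rules out $X_{c_s}<_d X_{c_s}$ when $c_s\in A^0$ — the $A^0$-part of the chain collapses to a single component sandwiched between the $A^-$-segment and the $A^+$-segment.

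It remains to read off the intersection from this structure. A node belongs to $X(A^\ast)$ precisely when its component lies in $A^0$, and an orbit $O_s$ belongs to $X(A^\ast)$ precisely when $c_{s-1}\in A^-\cup A^0$ and $c_s\in A^0\cup A^+$. If the chain meets no $A^0$-component then $c_0=\min\in A^-$ and $c_m=\max\in A^+$, exactly one orbit $\mathbf{G}_m.y$ survives — the one between the last component in $A^-$ and the first component in $A^+$ — and $X(A^\ast)\cap f_{\mathbf{K}}(C_{\mathbf{K}})=\mathbf{G}_m.y$, of form~(1). If the unique $A^0$-index $r$ satisfies $0<r<m$, the surviving pieces are $O_r$, the node $y_r^+=y_{r+1}^-\in X_{c_r}$ and $O_{r+1}$, giving form~(2) with $x_1=y_r$, $x_2=y_{r+1}$. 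If $r=0$ (i.e.\ $\min\in A^0$) the surviving pieces are $O_1$ together with the node $y_1^-\in X_{\min}$, giving form~(3) with $y_1\in X_{\min}^--X_{\min}$; the case $r=m$ is symmetric. In every case the intersection is nonempty and has one of the prescribed forms, so Theorem~\ref{summary} yields $(\mathrm{S})$ and $(\mathrm{E})$, and Theorem~\ref{thm0} concludes. I expect the combinatorial bookkeeping of the previous paragraph to be the main obstacle; once Theorems~\ref{thm0} and~\ref{summary} and Lemma~\ref{theta-red} are available, the rest is formal.
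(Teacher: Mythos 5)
Your proof is correct and follows the same route as the paper: apply Theorem~\ref{thm0} by checking $(\Theta)$ via Lemma~\ref{theta-red} (reducing to the closedness of $X_i^\pm$ in $X(A^\ast)$ for $i\in A^0$, via the argument of Lemma~\ref{1544}), and checking $(\mathrm{S})$ and $(\mathrm{E})$ via Theorem~\ref{summary}. The paper verifies $(\mathrm{S})$ and $(\mathrm{E})$ more tersely through Propositions~\ref{f2} and~\ref{f1} (any two surviving orbits on a chain must be adjacent, and the intersection is nonempty), whereas you work out the full partition of the chain into an $A^-$-initial segment, at most one $A^0$-component, and an $A^+$-final segment — more explicit bookkeeping, but the same underlying combinatorics.
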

\begin{proof}
It is equivalent to show the quotient stack $[X(A^*)/\mathbf{G}_m]$ admits a proper good moduli space. For this we check the conditions in Theorem \ref{thm0}.
\begin{enumerate}
\item[($\Theta$)]
By Lemma \ref{theta-red}, we show the evaluation $\mathrm{ev}_1: (X(A^*) \cap X_i)^\pm \to X(A^*)$ is a closed immersion for each $i$. 

Suppose $X(A^*) \cap X_i \neq \emptyset$. Then $i \in A^0$ and $X_i \subseteq X(A^*)$ by Remark \ref{f0} (2). Then we reduce to show $X_i^\pm \subseteq X(A^*)$ is closed for each $i \in A^0$. For any point $x \in \overline{X_i^-}^{X(A^*)} \subseteq X(A^*)$, as in the proof of Lemma \ref{1544} we have that $x \in X_j^-$ for some $X_i<X_j$. Since $x \in X(A^*)$ this means that $j \in A^- \cup A^0$ and then $i \in A^0$ forces that $i=j$, as desired.
\item[(S)]
By Proposition \ref{f2}, we show for any smoothable maximal chain of orbits $f_\mathbf{K}: C_\mathbf{K} \to X$ in $X$, if $\mathbf{G}_m.x_1 \neq \mathbf{G}_m.x_2 \subseteq X(A^*) \cap f_\mathbf{K}(C_\mathbf{K})$, then we have either $x_1^+=x_2^- \in X(A^*)$ or $x_1^-=x_2^+ \in X(A^*)$.

Suppose $x_1$ appears earlier than $x_2$ along the chain $f_\mathbf{K}$, i.e., if $x_1^+ \in X_i$ and $x_2^- \in X_j$, then $X_i<X_j$. Since $x_1,x_2 \in X(A^*)$ this means $i \in A^0 \cup A^+$ and $j \in A^- \cup A^0$, then $i=j$, as desired.
\item[(E)]
By Proposition \ref{f1}, we show for any smoothable maximal chain of orbits $f_\mathbf{K}: C_\mathbf{K} \to X$ in $X$, we have $X(A^*) \cap f_\mathbf{K}(C_\mathbf{K}) \neq \emptyset$. 

If $X(A^*) \cap f_\mathbf{K}(C_\mathbf{K}) = \emptyset$, then
\[
\{\min,\max\} \subseteq \{i: X_i \cap f_\mathbf{K}(C_\mathbf{K}) \neq \emptyset\} \subseteq A^- \text{ or } A^+.
\]
The first inclusion follows since $f_\mathbf{K}$ is maximal. Suppose the second inclusion fails, then we can find a point $x \in f_\mathbf{K}(C_\mathbf{K})$ such that $x \in X_i^- \cap X_j^+$ for some $i \in A^- \cup A^0$ and $j \in A^0 \cup A^+$. By definition $x \in X(A^*)$, i.e., $x \in X(A^*) \cap f_\mathbf{K}(C_\mathbf{K}) \neq \emptyset$, a contradiction. But then $\{\min,\max\} \subseteq A^-$ or $A^+$ implies that $A^0 \cup A^+=\emptyset$ or $A^- \cup A^0=\emptyset$ by Corollary \ref{well-order}, again a contradiction.
\end{enumerate}
\end{proof}
Therefore we have a correspondence
\begin{align*}
\{ 
\text{Semi-sections on } X
\}
&\to 
\begin{Bmatrix} 
\mathbf{G}_m\text{-invariant open dense subsets of } X  \\
\text{with proper good quotients}
\end{Bmatrix} \\
(A^-,A^0,A^+) &\mapsto X(A^\ast),
\end{align*}
by mapping each semi-section to its semi-sectional subset. Under this correspondence, sections on $X$ correspond to open subsets with proper geometric quotients by Remark \ref{f0} (2) and the fact that a good quotient is geometric if and only if it contains no fixed point. To conclude we show the correspondence is injective.
\begin{lem}
Let $X$ be a proper variety over a field $k$ with a $\mathbf{G}_m$-action. Different semi-sections on $X$ define different semi-sectional subsets of $X$.
\end{lem}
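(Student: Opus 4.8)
The plan is to show that a semi-section can be reconstructed from its semi-sectional subset $U:=X(A^\ast)$, so that two semi-sections producing the same subset must agree. Suppose $(A^-,A^0,A^+)$ and $(B^-,B^0,B^+)$ are semi-sections with $X(A^\ast)=X(B^\ast)=U$. First, by Remark \ref{f0}(2) one has $A^0=\{i\in\pi_0(X^0):U\cap X_i\neq\emptyset\}=B^0$, so, since $\pi_0(X^0)=A^-\sqcup A^0\sqcup A^+$, it suffices to prove $A^-=B^-$. Next I would dispose of the exceptional case: if $\min\in A^0$, then because $X_{\min}<X_j$ for every $j\neq\min$ (Corollary \ref{well-order}) while no two elements of $A^0$ are comparable (Remark \ref{f0}(1)), necessarily $A^0=\{\min\}$, and the second form of the defining condition of a semi-section then forces $A^+=\pi_0(X^0)\setminus\{\min\}$ and $A^-=\emptyset$; symmetrically $\max\in A^0$ forces $(A^-,A^0,A^+)=(\pi_0(X^0)\setminus\{\max\},\{\max\},\emptyset)$. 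In this exceptional case $A^0$ already determines the whole semi-section, so we are done; hence I may assume $\min\in A^-$ and $\max\in A^+$.

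In the main case I would use the decomposition $U=\bigl(\bigsqcup_{i\in A^-\cup A^0}X_i^-\bigr)\cap\bigl(\bigsqcup_{j\in A^0\cup A^+}X_j^+\bigr)$ established in the preceding proposition, together with $X=\bigsqcup_i X_i^-=\bigsqcup_j X_j^+$ (valid since $X$ is proper), to write the complement as $Z:=X\setminus U=\bigl(\bigsqcup_{i\in A^+}X_i^-\bigr)\cup\bigl(\bigsqcup_{j\in A^-}X_j^+\bigr)$, each piece closed by Lemma \ref{1544}. The key claim is that these two pieces are disjoint and each connected, so that each is clopen in $Z$ and therefore a connected component of $Z$. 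Disjointness: a common point $x$ would have $x^-\in X_i$ with $i\in A^+$ and $x^+\in X_j$ with $j\in A^-$, forcing $i=j$ if $x$ is fixed and $X_i<_d X_j$ otherwise, both impossible — the first since $A^+\cap A^-=\emptyset$, the second since $A^+$ is saturated upwards. Connectedness of $\bigsqcup_{j\in A^-}X_j^+$: every $X_j$ with $j\in A^-$ is joined to $X_{\min}$ by a chain $X_{\min}<_d\cdots<_d X_j$ whose members all lie in $A^-$ (downward closure), and consecutive members are joined by the complete orbit of a point of $X_{j_{\ell-1}}^-\cap X_{j_\ell}^+$, which lies entirely in $\bigsqcup_{j\in A^-}X_j^+$ (its $\pm$-limits are indexed by members of $A^-$); moreover any point of $X_\ell^+$ with $\ell\in A^-$ is joined to $X_\ell$ by its own complete orbit. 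The mirror argument, using $X_i<X_{\max}$ and $A^+$ saturated upwards, shows $\bigsqcup_{i\in A^+}X_i^-$ is connected.

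It then follows that $\bigsqcup_{j\in A^-}X_j^+$ is exactly the connected component $\Omega_1$ of $Z$ containing $X_{\min}$ and $\bigsqcup_{i\in A^+}X_i^-$ is the component $\Omega_2$ containing $X_{\max}$ (in particular $X_{\min}$ and $X_{\max}$ lie in different components); both $\Omega_1$ and $\Omega_2$ are intrinsic to $U=X\setminus Z$. Finally $A^-=\{j:X_j\subseteq\Omega_1\}$: if $j\in A^-$ then $X_j\subseteq X_j^+\subseteq\Omega_1$, and conversely if the connected set $X_j$ is contained in the disjoint union $\bigsqcup_{\ell\in A^-}X_\ell^+$ then $X_j\subseteq X_\ell^+$ for a single $\ell\in A^-$, whence (each point of $X_j$ being its own $+$-limit) $X_j=X_\ell$ and $j=\ell\in A^-$. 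Thus $A^-$, like $A^0$, is determined by $U$, so $X(A^\ast)=X(B^\ast)$ gives $A^-=B^-$, completing the proof. I expect the connectedness of the two pieces of $Z$ to be the only real content of the argument — everything else is bookkeeping with the saturation properties of a semi-section — and it is precisely there that the geometry of the $\mathbf{G}_m$-flow (Corollary \ref{well-order} and complete orbit maps) has to be invoked.
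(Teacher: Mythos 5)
Your proof is correct, but it takes a genuinely different and considerably longer route than the paper's. The paper's argument is a short direct one: it picks an index $i$ in the symmetric difference $(A^-\cup A^0)\cap B^+$ (after symmetrizing), uses Corollary \ref{well-order} to produce $j\in A^0\cup A^+$ with $X_i<_d X_j$, and observes that the nonempty set $X_i^-\cap X_j^+$ lies in $X(A^\ast)$ but not in $X(B^\ast)$ (since $i,j\in B^+$). That one distinguishing point is the whole proof. You instead \emph{reconstruct} the semi-section from its semi-sectional subset $U$: first $A^0$ from the fixed points met by $U$, and then $A^-$, $A^+$ from the two connected components of $Z=X\setminus U$, which you exhibit as $\bigsqcup_{j\in A^-}X_j^+$ and $\bigsqcup_{i\in A^+}X_i^-$ — disjoint, closed by Lemma \ref{1544}, and each connected via chains of complete orbit closures to $X_{\min}$ or $X_{\max}$. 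All the steps check out (the downward/upward saturation of $A^\mp$ is exactly what keeps the relevant orbit closures inside the respective pieces), and it is a nice side benefit that you re-derive, purely combinatorially for arbitrary proper $X$, the two-component structure of $Z$ that the paper obtains by separate means (Corollary \ref{1727}, Proposition \ref{2028-3}) in the normal case. The trade-off is that your argument is much more machinery-heavy for a lemma whose sole purpose in the paper is injectivity of the correspondence; the paper's distinguishing-point argument gets there in a few lines without ever touching connectedness.
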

\begin{proof}
If $(A^-,A^0,A^+) \neq (B^-,B^0,B^+)$ are semi-sections on $X$, then
\[
A^- \cup A^0 \neq B^- \cup B^0 \text{ or } A^0 \cup A^+ \neq B^0 \cup B^+.
\]
It suffices to consider the first case. We may assume that $(A^- \cup A^0) \cap B^+ \neq \emptyset$. For any $i \in (A^- \cup A^0) \cap B^+$, by Corollary \ref{well-order} there exists $j \in A^0 \cup A^+$ such that $X_i^- \cap X_j^+ \neq \emptyset$, i.e., $X_i<_d X_j$. Then $X_i^- \cap X_j^+ \subseteq X(A^\ast)$ and $j \in B^+$, so $X_i^- \cap X_j^+ \nsubseteq X(B^\ast)$ and this tells the difference between $X(A^\ast)$ and $X(B^\ast)$.
\end{proof}
As an application we show that any normal projective $\mathbf{G}_m$-variety is covered by open subsets with proper good quotients.
\begin{lem}\label{q-cycle}
Let $X$ be a normal quasi-projective variety over a field $k$ with a $\mathbf{G}_m$-action. Then $X_i<X_j$ and $X_j<X_i$ imply that $i=j$.
\end{lem}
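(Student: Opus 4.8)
The plan is to embed $X$ $\mathbf{G}_m$-equivariantly into a projective space carrying a linear $\mathbf{G}_m$-action, in which the relation $<$ is visibly controlled by weights and hence admits no nontrivial cycles.

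Since $X$ is normal and quasi-projective, Sumihiro's theorem (\cite{MR337963}, \cite{MR387294}) provides a $\mathbf{G}_m$-equivariant locally closed immersion $\iota\colon X\hookrightarrow\mathbf{P}(V)$ for some finite-dimensional $\mathbf{G}_m$-representation $V$ over $k$. Decomposing $V=\bigoplus_{w\in\mathbf{Z}}V_w$ into weight spaces yields $\mathbf{P}(V)^0=\bigsqcup_w\mathbf{P}(V_w)$, a disjoint union of linear subspaces (closed, hence clopen in $\mathbf{P}(V)^0$). As $X^0=\iota^{-1}(\mathbf{P}(V)^0)$ and each connected component $X_i$ of $X^0$ is connected, there is a well-defined weight $w(i)$ with $\iota(X_i)\subseteq\mathbf{P}(V_{w(i)})$.

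The heart of the matter is the elementary limit computation in $\mathbf{P}(V)$: for $x=[v]$ with $v=\sum_w v_w$ (each $v_w$ in the corresponding weight space, possibly after base change), put $a=\min\{w:v_w\neq 0\}$ and $b=\max\{w:v_w\neq 0\}$; rescaling the orbit map of $x$ by $t^{-a}$ (resp. by $t^{-b}$) exhibits its extension over $0$ as $[v_a]\in\mathbf{P}(V_a)$ (resp. over $\infty$ as $[v_b]\in\mathbf{P}(V_b)$), so $x^-\in\mathbf{P}(V_a)$, $x^+\in\mathbf{P}(V_b)$, and $a=b$ exactly when $x$ is $\mathbf{G}_m$-fixed. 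Since $\iota$ is a locally closed immersion into a separated space, the complete orbit map of a point $x\in X$ (whenever defined, i.e. when $x^\pm\in X$) is carried by $\iota$ to the complete orbit map of $\iota(x)$; hence $x^-\in X_i$ and $x^+\in X_j$ with $i\neq j$ forces $x$ non-fixed and $w(i)=a<b=w(j)$.

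It follows that $X_i<_d X_j$ with $i\neq j$ implies $w(i)<w(j)$, and that along any chain $X_i<_d\cdots<_d X_j$ the weight $w$ is non-decreasing, strictly increasing unless every step joins equal components; so $X_i<X_j$ with $i\neq j$ implies $w(i)<w(j)$. Therefore $X_i<X_j$ together with $X_j<X_i$ would give $w(i)<w(j)<w(i)$ unless $i=j$, and we conclude $i=j$. The argument is short; the only things to watch are invoking the equivariant projective-embedding form of Sumihiro's theorem (rather than the affine-cover version recalled in Remark~\ref{0948-4}) and the harmless bookkeeping passing from $<$ back to $<_d$---neither is a real obstacle.
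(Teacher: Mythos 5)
Your proof is correct and arrives at the same underlying invariant as the paper---a $\mathbf{Z}$-valued weight function on $\pi_0(X^0)$ that is strictly increasing along the relation $<$---but by a different technical route. The paper invokes \cite[Corollary 2.14]{MR3738079} to produce an ample $\mathbf{G}_m$-linearized line bundle $\mathscr{L}$, defines $\mathrm{wt}_{\mathscr{L}}(i)$ as the weight of $\mathbf{G}_m$ on the fiber $\mathscr{L}_x$ at a fixed point $x\in X_i$, and deduces strict monotonicity from the degree formula $0<\deg(\mathscr{L}|_{\mathbf{P}^1})=\mathrm{wt}_{\mathscr{L}}(j)-\mathrm{wt}_{\mathscr{L}}(i)$ for the restriction of $\mathscr{L}$ to a non-constant complete orbit map (citing \cite[Remark 2.2]{MR3758902}). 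You instead invoke the equivariant projective-embedding form of Sumihiro's theorem, read off the weight label $w(i)$ from which $\mathbf{P}(V_w)$ the component $X_i$ lands in, and verify strict monotonicity by the elementary limit computation $x^-\in\mathbf{P}(V_{\min})$, $x^+\in\mathbf{P}(V_{\max})$. The two approaches are dual ways of packaging the same ampleness input (an ample linearization on normal quasi-projective $X$ is essentially equivalent to such an embedding, and $\mathrm{wt}_{\mathscr{L}}$ is, up to sign and normalization, your $w$); the paper's version is terser once the degree formula is at hand, while yours is more self-contained and makes the mechanism behind the strict inequality transparent. Your handling of the passage from $<_d$ to $<$ (non-decreasing along a chain, strictly increasing as soon as some consecutive components differ, which must happen if $i\neq j$) is correct and fills a step the paper leaves implicit.
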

The assertion could fail for normal proper varieties, see the examples in \cite[\S 1]{MR704991}.
\begin{proof}
Since $X$ is normal and quasi-projective, by \cite[Corollary 2.14]{MR3738079} there exists an ample $\mathbf{G}_m$-linearied line bundle $\mathscr{L}$ over $X$. For any fixed point $x \in X^0$ we denote by $\mathrm{wt}_{\mathbf{G}_m}(\mathscr{L},x) \in X^*(\mathbf{G}_m) \cong \mathbf{Z}$ the character of $\mathbf{G}_m$ given by the $\mathbf{G}_m$-action on the fiber $\mathscr{L}_x$ of $\mathscr{L}$ at $x$. The character is constant on each connected component of $X^0$ and thus defines a map
\[
\mathrm{wt}_{\mathscr{L}}: \pi_0(X^0) \to \mathbf{Z} \text{ mapping } i \mapsto \mathrm{wt}_{\mathbf{G}_m}(\mathscr{L},x) \text{ for some } x \in X_i.
\]
To conclude, we claim that $X_i<X_j$ for $i \neq j$ implies $\mathrm{wt}_{\mathscr{L}}(i)<\mathrm{wt}_{\mathscr{L}}(j)$. We may assume that $X_i <_d X_j$ and in this case there exists a non-constant complete orbit map $\overline{\sigma}_x: \mathbf{P}^1_{\mathbf{K}}=\mathrm{Proj}(\mathbf{K}[x_0,x_1]) \to X$ for some point $x \in X_i^- \cap X_j^+(\mathbf{K})$, such that the homogeneous coordinates $x_0$ and $x_1$ have $\mathbf{G}_m$-weight $0$ and $1$ respectively. Since $\mathscr{L}$ is ample, by \cite[Remark 2.2]{MR3758902} we have
\[
0<\deg(\mathscr{L}|_{\mathbf{P}^1_{\mathbf{K}}})=\dfrac{\mathrm{wt}_{\mathbf{G}_m}(\mathscr{L},x^+)-\mathrm{wt}_{\mathbf{G}_m}(\mathscr{L},x^-)}{\mathrm{wt}_{\mathbf{G}_m}(x_1)-\mathrm{wt}_{\mathbf{G}_m}(x_0)}=\mathrm{wt}_{\mathscr{L}}(j)-\mathrm{wt}_{\mathscr{L}}(i).
\]
\end{proof}
\begin{prop}[\cite{MR709583}, Theorem 1.6 if $k=\mathbf{C}$ and $X^0=\emptyset$]\label{prop-app}
Let $X$ be a normal projective variety over a field $k$ with a $\mathbf{G}_m$-action. Then $X$ is covered by $\mathbf{G}_m$-invariant open subsets with proper good quotients.
\end{prop}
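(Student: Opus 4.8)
The plan is to prove that $X$ is covered by semi-sectional subsets. By Theorem~\ref{prop:if} every semi-sectional subset is a $\mathbf{G}_m$-invariant open dense subset of $X$ with a proper good quotient, so it suffices to show that each point $x\in X$ lies in $X(A^\ast)$ for some semi-section $(A^-,A^0,A^+)$ on $X$. Since $X$ is projective it is in particular normal and quasi-projective, so Lemma~\ref{q-cycle} applies: the flow relation $<$ on the finite set $\pi_0(X^0)$ is transitive (by its definition as a chain of $<_d$) and antisymmetric (one never has $X_i<X_j<X_i$ with $i\neq j$). Since $X$ is proper, every $x\in X$ admits a complete orbit map; writing $x^-\in X_i$ and $x^+\in X_j$ we get $x\in X_i^-\cap X_j^+$, and $X_i<X_j$ whenever $i\neq j$ (the orbit of $x$ being a one-step chain from $X_i$ to $X_j$).

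Given such an $x$, I would take the division $(A^-,A^0,A^+)$ of $\pi_0(X^0)$ defined by
\[
A^0:=\{i\},\qquad A^-:=\{\,k\in\pi_0(X^0):k\neq i\text{ and }X_k<X_i\,\},\qquad A^+:=\pi_0(X^0)\setminus(A^-\cup A^0).
\]
To see this is a semi-section (Definition~\ref{def:ss}), let $i'\in A^-\cup A^0$ and $X_{j'}<X_{i'}$ with $j'\neq i'$. Either $i'=i$, whence $j'\neq i$ and $X_{j'}<X_i$; or $i'\in A^-$, in which case $X_{i'}<X_i$, so $X_{j'}<X_i$ by transitivity, and $j'\neq i$ since $j'=i$ would give $X_i<X_{i'}<X_i$ with $i\neq i'$, contradicting Lemma~\ref{q-cycle}. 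In both cases $j'\in A^-$, which is the defining condition. Finally $x\in X(A^\ast)$: indeed $i\in A^0\subseteq A^-\cup A^0$, while $j\notin A^-$ (clear if $j=i$; and if $j\neq i$ then $X_i<X_j$, so $X_j<X_i$ would force $i=j$ by Lemma~\ref{q-cycle}), hence $j\in A^0\cup A^+$, so that $x\in X_i^-\cap X_j^+\subseteq X(A^\ast)$. Non-triviality of the division, with $\min\in A^-\cup A^0$ and $\max\in A^0\cup A^+$, follows from Corollary~\ref{well-order} together with Lemma~\ref{q-cycle}.

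The argument is almost entirely bookkeeping, since the substantive work is already packaged in Theorem~\ref{prop:if}. The one essential ingredient beyond formalities is Lemma~\ref{q-cycle} — that is, the quasi-projectivity hypothesis — which is exactly what makes $<$ antisymmetric and hence allows the one-element antichain $\{i\}$ to be completed to a semi-section; for a general normal proper variety $<$ need not be antisymmetric, and this is precisely why the statement is restricted to the projective case. So I do not expect a genuine obstacle, only the need to set up the combinatorics of the triple $(A^-,A^0,A^+)$ attached to $x$ correctly.
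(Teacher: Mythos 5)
Your proof is correct, and the construction of the semi-section is genuinely different from the paper's. The paper defines $\Delta^-:=\{\ell: X_\ell<X_i\}$ and $\Delta^+:=\{\ell: X_\ell>X_j\}$, uses Lemma~\ref{q-cycle} to see no element of $\Delta^-$ dominates any element of $\Delta^+$, extends $(\Delta^-,\Delta^+)$ to a pair $(A^-,A^+)$ of subsets of $\pi_0(X^0)$ that is \emph{maximal} subject to $\Delta^\pm\subseteq A^\pm$ and $X_p\ngtr X_q$ for $p\in A^-$, $q\in A^+$, $p\neq q$, then sets $A^0:=A^-\cap A^+$ and argues by maximality that $A^-\cup A^+=\pi_0(X^0)$ and that $i\in A^-\setminus A^0$, $j\in A^+\setminus A^0$. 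You instead declare $A^0:=\{i\}$ a singleton, $A^-$ the strict down-set of $i$, and $A^+$ the complement, and verify the semi-section axiom directly; no maximal-extension argument is needed. Both routes ultimately rest on the same two ingredients — Theorem~\ref{prop:if} and the antisymmetry/acyclicity of $<$ coming from Lemma~\ref{q-cycle} — so neither is more general, but yours is more explicit and avoids justifying that a maximal pair exists and has the asserted properties. One point worth making explicit rather than implicit: you write ``$X_{j'}<X_{i'}$ with $j'\neq i'$'' when checking Definition~\ref{def:ss}, but the definition as stated does not restrict to $j'\neq i'$. This is harmless here because the weight computation in Lemma~\ref{q-cycle} in fact shows that $X_i<_d X_i$ via a non-fixed orbit is impossible, hence $<$ is irreflexive on $\pi_0(X^0)$ in the normal quasi-projective case; spelling that out closes the only small gap in the bookkeeping.
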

\begin{proof}
By Theorem \ref{prop:if} it suffices to show any point of $X$ is contained in a semi-sectional subset, i.e., for any point $x \in X$, say $x \in X_i^- \cap X_j^+$ for some $i,j \in \pi_0(X^0)$, there exists a semi-section $(A^-,A^0,A^+)$ on $X$ such that $i \in A^- \cup A^0$ and $j \in A^0 \cup A^+$. To start we define
\begin{gather*}
\Delta^-:=\{\ell: X_\ell < X_i\} \text{ and } \Delta^+:=\{\ell: X_\ell > X_j\}.
\end{gather*}
For any $p \in \Delta^-$ and $q \in \Delta^+$ with $p \neq q$, by definition $X_q > X_j > X_i > X_p$ and hence $X_p \ngtr X_q$ by Lemma \ref{q-cycle}. Let $(A^-,A^+)$ be a maximal pair of subsets of $\pi_0(X^0)$ such that
\begin{enumerate}
\item $\Delta^\pm \subseteq A^\pm$ and
\item $X_p \ngtr X_q$ for any $p \in A^-$ and $q \in A^+$ with $p \neq q$.
\end{enumerate}
Then $i \in A^--A^+$ and $j \in A^+-A^-$. Let $A^0:=A^- \cap A^+$ and we claim that
\[
\text{The triple } (A^--A^0,A^0,A^+-A^0) \text{ is a semi-section on } X.
\]
It remains to show $A^- \cup A^+=\pi_0(X^0)$. Indeed, for any $\ell \in \pi_0(X^0)-A^--A^+$, the maximality of the pair $(A^-,A^+)$ implies that there exist $p \in A^-$ and $q \in A^+$ such that $X_p>X_\ell>X_q$, then necessarily $p=q$ and hence $\ell=p$ by Lemma \ref{q-cycle}, a contradiction.
\end{proof}

\end{document}